\definecolor{darkred}{RGB}{150,0,0}
\definecolor{darkgreen}{RGB}{0,150,0}
\definecolor{darkblue}{RGB}{0,0,200}
\newtheorem{thm}{Theorem}
\newtheorem{defn}[thm]{Definition}
\newtheorem{cor}[thm]{Corollary}
\newtheorem{exm}{Example}
\newtheorem{rem}[thm]{Remark}
\newtheorem{lem}[thm]{Lemma}
\newtheorem{assumption}{Assumption}
\newcommand{\mc}{\mathcal}
\newcommand{\m}[1]{{\bf{#1}}}
\renewcommand{\mc}[1]{\ensuremath{\mathcal{#1}}} 	
\newcommand{\mb}[1]{{\mathbb{#1}}}
\newcommand{\norm}[1]{\left\lVert#1\right\rVert}
\DeclareMathOperator*{\argmax}{arg\,max}
\DeclareMathOperator*{\argmin}{arg\,min}
\begin{document}
\runningauthor{Davoud Ataee Tarzanagh, Parvin Nazari, Bojian Hou, Li Shen, Laura Balzano}
\twocolumn[

\aistatstitle{Online Bilevel Optimization: Regret Analysis of Online Alternating Gradient Methods
}
\aistatsauthor{ 
Davoud Ataee Tarzanagh \And Parvin Nazari \And  Bojian Hou }
\aistatsaddress{University of Pennsylvania \And  Amirkabir University of Technology \And University of Pennsylvania}
\aistatsauthor{ 
Li Shen \And  Laura Balzano }
\aistatsaddress{University of Pennsylvania  \And University of Michigan}
]
\begin{abstract} 
This paper introduces \textit{online bilevel optimization} in which a sequence of time-varying bilevel problems is revealed one after the other. We extend the known regret bounds for online single-level algorithms to the bilevel setting. Specifically, we provide new notions of \textit{bilevel regret}, develop an online alternating time-averaged gradient method that is capable of leveraging smoothness, and give regret bounds in terms of the path-length of the inner and outer minimizer sequences.
\end{abstract}
\section{Introduction}\label{sec:intro}
Bilevel optimization (BO) is rapidly evolving due to its wide array of applications in modern machine learning problems, including meta-learning~\citep{bertinetto2018meta}, hyperparameter optimization~\citep{feurer2019hyperparameter}, neural network architecture search~\citep{liu2018darts}, data
hypercleaning~\citep{shaban2019truncated}, and reinforcement learning~\citep{wu2020finite}. 
A fundamental assumption in BO, which has been adopted by almost all of the relevant literature~\citep{franceschi2017forward,ghadimi2018approximation,ji2021bilevel}, is that the \textit{inner} and \textit{outer} cost functions do \textit{not} change throughout the horizon over which we seek to optimize. This offline setting may not be suitable to model temporal changes in today's machine learning problems, such as online actor-critic~\citep{vamvoudakis2010online,zhou2020online}, online meta-learning~\citep{finn2019online}, strategic dynamic regression \citep{harris2021stateful}, and sequential decision-making, for which the objective functions are time-varying and are not available to the decision-maker \textit{a priori}. To address these challenges, this paper introduces an \textit{online bilevel optimization} (OBO) setting in which a sequence of bilevel problems is revealed one after the other, and it studies computationally tractable notions of \textit{bilevel regret} minimization. 
\subsection{Background:  Online  Single-Level Optimization}
In online single-level optimization, the setup resembles a game between a learner and an adversary \citep{Hazan16a}.  In each of the repeated decision rounds ($t \in [T]:=\{1,\dots,T\}$), the learner predicts $\m{x}_t \in \mc{X} \subset \mb{R}^{d_1}$, an element within a convex decision set. Simultaneously, the adversary selects a loss function $f_t: \mc{X} \rightarrow \mb{R}$, and the learner observes $f_t(\m{x})$, incurring a loss of $f_t(\m{x}_t)$.  In the \textit{non-static} setting \citep{besbes2015non}, the learner's performance is measured through its single-level dynamic regret 
\begin{equation}\label{eqn:worst:dynamic:regret}
\mbox{D-Reg}_T := \sum^T_{t=1} (f_t(\m{x}_t)-f_t(\m{x}_t^*)),
\end{equation}
where $\m{x}_t^*\in \argmin_{\m{x}\in\mc{X}}f_t(\m{x})$.

In the case of static regret~\citep{zinkevich2003online}, $\m{x}^\ast_t$ is replaced by $\m{x}^\ast \in \argmin_{\m{x} \in \mc{X}} \sum_{t=1}^T f_t(\m{x})$, i.e.,
\begin{equation}\label{eqn:static:regret}
    \mbox{S-Reg}_T := \sum^T_{t=1} (f_t(\m{x}_t)-f_t(\m{x}^*)).
\end{equation}
The static regret \eqref{eqn:static:regret} assumes that the comparators do not change over time. This assumption can be unrealistic in many practical online problems, ranging from motion imagery formation to network analysis, where the underlying environment is dynamic. The parameters $\{\m{x}^{*}_t\}_{t=1}^{T}$ could correspond to frames in a video or the weights of edges in a social network and, by nature, are variable \citep{hall2015online}.

\subsection{Stackelberg Game and Online Bilevel Optimization}\label{sec:obo}
Bilevel optimization, also known as the Stackelberg leader-follower model, involves two players whose choices impact each other's outcomes. One player, the \textit{leader}, possesses knowledge of the other player's objective function, enabling her to predict the follower's choice accurately. Consequently, the leader optimizes her own objective while factoring in the follower's expected response. 
In contrast, the \textit{follower} is only aware of her own objective and must consider how it is influenced by the leader's decisions \citep{stackelberg1952theory}.

\noindent \textbf{Online Bilevel Optimization}: Let  $\m{x}_t \in \mc{X} \subset \mb{R}^{d_1}$ and $f_t: \mc{X} \times \mb{R}^{d_2}\rightarrow \mb{R}$ denote the decision variable and the objective function for the leader, respectively; similarly define $\m{y}_t \in \mb{R}^{d_2}$ and $g_t : \mc{X} \times \mb{R}^{d_2}\rightarrow \mb{R}$ for the follower \footnote{For simplicity of analysis, we use $\mb{R}^{d_2}$ as the follower's decision set.}. In each round $t\in[T]$, knowing the decision $\m{x}_{t-1}$ of the leader and the objective function $g_{t-1}$ of the follower, the follower has to select $\m{y}_{t} \in \mb{R}^{d_2}$ in an attempt to minimize  $g_t(\m{x}_t, \m{y})$ using the information from rounds $t-1, t-2, \ldots, 0$. Being aware of the follower's selection, the leader then moves by selecting $ \m{x}_t \in \mc{X}$ in an attempt to minimize the \textit{bilevel dynamic regret}, defined as:
\begin{subequations}\label{reg:bilevel:dynamic}
\begin{align}\label{eqn:fundiff}
\hspace{-.3cm}\textnormal{BD-Reg}_T
     & := \sum_{t=1}^T (f_t(\m{x}_t, \m{y}_t^*(\m{x}_t))-f_t(\m{x}^*_{t},  \m{y}^*_t(\m{x}^*_{t}))), 
\end{align}
where 
\begin{equation}\label{eqn:obl}
    \begin{split}
 &\m{y}^*_t(\m{x}) \in \argmin_{\m{y} \in  \mb{R}^{d_2}} g_t(\m{x}, \m{y}),~~~\textnormal{and}\\
 &~\m{x}^*_t \in \argmin_{\m{x} \in \mc{X}} f_t\left(\m{x}, \m{y}^*_t(\m{x})\right).        
    \end{split}
\end{equation}
\end{subequations}
Our objective is to design online algorithms with a sublinear bilevel regret, i.e.,  $\textnormal{BD-Reg}_T =o(T)$.

We also study the framework of regret minimization where $\m{x}^\ast_t$ in \eqref{reg:bilevel:dynamic} is replaced by $\m{x}^\ast \in \argmin_{\m{x} \in \mc{X}} \sum_{t=1}^T f_t(\m{x}, \m{y}^*_t(\m{x}))$. In this case, the goal of the leader is to generate a sequence of decisions $\{\m{x}_t\}_{t=1}^T$ so that the following regret can be minimized:
\begin{align}\label{reg:bilevel:static}
 \hspace{-.3cm} \textnormal{BS-Reg}_T:=\sum_{t=1}^T \big(f_t(\m{x}_t, \m{y}^*_t(\m{x}_t))-f_t(\m{x}^*,\m{y}^*_t(\m{x}^*))\big).
\end{align}
Note that the above regret is \textit{not} fully static, as the inner optima  $\{\m{y}^*_t\}_{t=1}^T$ are changing over $T$. 

In general, it is impossible to achieve a sublinear dynamic regret bound due to the arbitrary fluctuations in the time-varying functions~\citep{besbes2015non}. Existing single-level analysis shows that it is indeed possible to bound the dynamic regret in terms of certain regularities of the comparator sequence \citep{zinkevich2003online,besbes2015non}. Hence, in order to achieve sublinear regret, one has to impose some regularity constraints on the sequence of cost functions.  In this work, we define the outer and inner \textit{path-length} (of order $p$) quantities to capture the regularity of the sequences:
\begin{subequations}\label{eqn:pathes}
\begin{equation}\label{eq:pathlength}
    \begin{split}
& P_{p,T} :=  \sum_{t=2}^{T} \|\m{x}^*_{t-1} - \m{x}^*_{t}\|^p,~~\textnormal{and}~~\\
&   Y_{p,T} :=  \sum_{t=2}^{T}\norm{\m{y}_{t-1}^*(\m{x}^*_{t-1}) - \m{y}_{t}^*(\m{x}^*_t)}^p. 
\end{split}
\end{equation}
Here, $P_{p,T}$ is the path-length of the outer minimizers and is widely used for analyzing the dynamic regret of single-level non-stationary optimization; see Table~\ref{table:blo2:results}. $Y_{p,T}$ is a new regularity metric for OBO that measures how fast the minimizers of inner cost functions change.   For simplicity of notation, we set 
\begin{equation}\label{sp}
\begin{split}
S_{p,T}:=P_{p,T} + Y_{p,T}.  
\end{split}
\end{equation}
\end{subequations}
Besides \eqref{eq:pathlength}, other notions of regularity have also been considered in online learning such as function variation \citep{besbes2015non} $V_{T} := \sum_{t=2}^{T} \sup_{\m{x}\in \mc{X}} |f_{t-1}(\m{x}) -  f_t(\m{x})|$ 
and gradient variation \citep{chiang2012online} $G_T := \sum_{t=2}^{T} \sup_{\m{x}\in \mc{X}}  \norm{\nabla f_{t-1}(\m{x}) - \nabla f_t(\m{x})}^2$. For the leader's regret analysis in the static and local settings, we respectively define 
\begin{equation}\label{eqn:static:inner:paths}
\begin{split}
\bar{Y}_{p,T}&:=  \sum_{t=2}^{T}\norm{\m{y}_{t-1}^*(\m{x}^*) - \m{y}_{t}^*(\m{x}^*)}^p, \quad \textnormal{and}\\
H_{p,T} &:=  \sum_{t=2}^{T} \sup_{\m{x}\in \mb{R}^{d_1}} \| \m{y}^*_{t-1}(\m{x}) - \m{y}^*_{t}(\m{x})\|^p        
\end{split}
\end{equation}
for capturing the dynamics of the inner problem.  

The metrics in \eqref{eqn:pathes} and \eqref{eqn:static:inner:paths} are generally not comparable. Appendix~\ref{sec:example} demonstrates their significant differences across various online problems.

\noindent\textbf{Our Results.} 
Our main contributions lie in developing several new results for OBO, including the first-known regret bound. More specifically, we

\noindent $\bullet$ Define new notions of {bilevel regret}, as given in \eqref{reg:bilevel:dynamic} and \eqref{reg:bilevel:static}, which are applicable to a wide class of convex OBO problems. To minimize the proposed regret, we introduce an online alternating gradient descent (OAGD) method capable of leveraging smoothness and provide its regret bounds in terms of the path-length of the inner and/or outer minimizer sequences. 

\noindent$\bullet$  Present a {problem-dependent regret bound} on the proposed dynamic regret that depends solely on the outer and inner path-length in Theorem \ref{thm:dynamic:strong:B-OGD}. We then establish a lower bound for OBO in Theorem~\ref{thm:lower:bound} that matches the upper bound we obtain for smooth strongly convex functions.   Notably, our bound in the single-level setting (\( Y_{1,T} = Y_{2,T} = 0 \)) aligns with the state-of-the-art result in \citep{zhang2017improved}, \textit{without} the need for multiple  gradient queries (multiple updates to \( \mathbf{x}_t \)) as used in their analysis.

\noindent$\bullet$ Introduce a novel notion of {bilevel local regret} defined in \eqref{reg:non:bilevel:dynamic}, which permits efficient OBO in the non-convex setting. 
We give an alternating time-averaged gradient method, and prove in Theorem \ref{thm:dynamic:nonc} that it achieves sublinear bilevel local regret. 

\noindent\textbf{Notation.} Any notation is defined upon its use and is summarized in Table~\ref{tab:notation} for reference.

\begin{table}[t]
\centering
\scalebox{.82}{
\begin{tabular}{ccc|}
\multicolumn{3}{c}{\large{Single-Level Regret Minimization}} 
\vspace{.2cm}
\\
\cline{3-3}
& &\multicolumn{1}{|c|}{{\textbf{Regret~~~~~~~~~}}} 
\\
\hline
\multicolumn{1}{|l|}{SC-D} & \multicolumn{1}{|l|}{{\cite{mokhtari2016online}}} &   $ \mc{O} (1+P_{1,T})$ \\ 
\multicolumn{1}{|c|}{} & \multicolumn{1}{|l|}{ \cite{zhang2017improved}}&  \multirow{1}{*}{ $ \mc{O} \left(1+\min\{P_{1,T}, P_{2,T}\}\right)$} 
\\ 
\cline{1-3}
\multicolumn{1}{|l|}{SC-S} &\multicolumn{1}{|l|}{\cite{hazan2007logarithmic}}&    $ \mc{O} \left( \log{T} \right)$ 
\\
\cline{1-3}
\multicolumn{1}{|c|}{ } & \multicolumn{1}{|l|}{{\cite{besbes2015non}}} &   $\mc{O}(1+G_{T}^{ 1/2}P_{1,T}^{ 1/2})$\\
\multicolumn{1}{|c|}{C-D} &\multicolumn{1}{|l|}{\cite{jadbabaie2015online}} &  $\mc{O}(1+T^{2/3}{V_{T}}^{ 1/3})$
\\
\multicolumn{1}{|l|}{} &\multicolumn{1}{|l|}{{\cite{yang2016tracking}}} & $\mc{O} \left(1+P_{1,T}\right)$
\\
\cline{1-3}
\multicolumn{1}{|l|}{C-S} &\multicolumn{1}{|l|}{{\cite{zinkevich2003online}}} &    $ \mc{O}(\sqrt{T})$ 
\\
\cline{1-3}
\multicolumn{1}{|c|}{NC-L} &\multicolumn{1}{|l|}{{\cite{hazan2017efficient}}} &$\mc{O}(T/W^2)$
\\
\hline
\end{tabular}
}
\hfill
\hspace{.2cm}
\begin{tabular}{ccc|}
\multicolumn{1}{l}{}& \multicolumn{1}{l}{}&\multicolumn{1}{c}{}\\
\multicolumn{3}{c}{Bilevel Regret Minimization}
\vspace{.1cm}
\\
\cline{3-3}
& &\multicolumn{1}{|c|}{{\textbf{Leader's Regret~~~~~}}} 
\\
\hline
\multicolumn{1}{|c|}{SC-D} &\multicolumn{1}{|l|}{{Theorem~\ref{thm:dynamic:strong:B-OGD}}} & $\mc{O} \left( 1+\min\{S_{1,T},S_{2,T}\}\right)$
\\
\multicolumn{1}{|c|}{SC-S}  &\multicolumn{1}{|l|}{{Theorem~\ref{thm:static:strong:B-OGD}}} & $\mc{O} \left(\log T +\bar{Y}_{2,T}\right)$
\\
\cline{1-3}
\multicolumn{1}{|c|}{C-D} &\multicolumn{1}{|l|}{ {Theorem~\ref{thm:dynamic:convex:B-OGD}}} &  $\mc{O} \left(1+S_{1,T}+Y_{2,T}\right)$  
\\
\multicolumn{1}{|c|}{C-S}&\multicolumn{1}{|l|}{ {Theorem~\ref{itm:thm:s:convex} }} & $\mc{O} (\sqrt{T}+ \bar{Y}_{1,T}+\bar{Y}_{2,T})$  
\\
\hline
\multicolumn{1}{|c|}{NC-L}&\multicolumn{1}{|l|}{{Theorem~\ref{thm:dynamic:nonc}}}& $\mc{O}(T/W+ H_{1,T}+   H_{2,T})$
\\
\hline
\end{tabular}
    \caption{  Comparison with prior works on  \textbf{regret minimization}. Here, $W = \sum_{i=0}^{w-1} u_i$ for $\{u_i\}_{i=0}^{w-1}$ with $ 1=u_{0}\geq u_1 \ldots u_{w-1} >0$ (Def.~\ref{def:avg:grad});  SC-D (-S), C-D(-S), and NC-L denote strongly convex-dynamic (-static), convex-dynamic (-static), and non-convex-local settings, respectively.} 
 \label{table:blo2:results}
  \vspace{-3mm}
 \end{table} 
\section{Related Work}\label{sec:related-work} 
\noindent\textbf{Static Regret Minimization:}~\textit{Single-level} static regret (Eq.~\eqref{eqn:static:regret}) is well-studied in the literature of online learning~\citep{shalev2011online}. \citet{zinkevich2003online}  shows that online gradient descent (OGD) provides an $\mc{O}(\sqrt{T})$ regret bound for convex  functions $\{f_t\}_{t=1}^T$. \citet{hazan2007logarithmic} improve this bound to  $\mc{O}(\log T)$ for strongly-convex functions $\{f_t\}_{t=1}^T$.

\noindent\textbf{Dynamic Regret Minimization:} \textit{Single-level} dynamic regret forces the player to compete with time-varying comparators and is thus particularly favored in non-stationary environments~\citep{besbes2015non}.
There are two kinds of dynamic regret in previous studies: \textit{universal dynamic regret} aims to compare with any feasible comparator sequence \citep{zinkevich2003online}, while \textit{worst-case dynamic regret} (defined in \eqref{eqn:worst:dynamic:regret}) specifies the comparator sequence to be the sequence of minimizers of online functions \citep{besbes2015non}. 
We compare regret bounds from related works for the latter case in Table~\ref{table:blo2:results}, as it is the setting studied in this paper. 

\noindent\textbf{Local Regret Minimization:} Approaches to online single-level non-convex optimization include adversarial multi-armed bandit with a continuum of arms \citep{bubeck2008online,heliou2020online} and the Follow-the-Perturbed-Leader (FPL) algorithm with an offline non-convex optimization oracle \citep{agarwal2019learning,suggala2020online}. Complementing this, \citep{hazan2017efficient} considered a local regret that averages a sliding window of gradients at the current model $\m{x}_t$, quantifying the objective of predicting points with small gradients on average.


\noindent\textbf{Bilevel Optimization:}
Since its introduction in~\citep{stackelberg1952theory} and the initial mathematical model by \citep{bracken1973mathematical}, there has been a steady growth in investigations and applications of offline BO~\citep{liu2021investigating}. 
Recently, gradient-based approaches have become popular for their simplicity and efficacy \citep{franceschi2017forward,ghadimi2018approximation,ji2021bilevel,chen2021closing}, yet they assume a offline cost function, a limitation we overcome by exploring new bilevel optimization algorithms in the online setting. Since our initial submission, several OBO studies have emerged \citep{lin2024non,huang2023online}, with \cite{lin2024non} introducing an OBO method that updates $\m{x}_t$ based on an average of recent hypergradient estimates, enabling scalable OBO through an approximate Hessian-inverse vector product by solving a linear system.
\section{Algorithm and Regret Bounds}\label{sec:algplusreg}

In this section, we provide bilevel regret bounds based on the regularities defined in \eqref{eqn:pathes} and \eqref{eqn:static:inner:paths}. We first list assumptions for OBO.
%
\begin{assumption}\label{assu:f} 
Let $\m{z} = [\m{x}; \m{y}]$ and $\m{z}' = [\m{x}'; \m{y}']$, where $\m{x}, \m{x}' \in \mc{X}$ and $\m{y}, \m{y}' \in \mathbb{R}^{d_2}$. For all $t \in [T]$:
\begin{enumerate}[label={\textnormal{\textbf{A\arabic*}.}}, wide, labelindent=0pt, itemsep=0pt]
\vspace{-.3cm}
\item\label{assu:f:a1} $f_t$ is $\ell_{f,0}$-Lipschitz continuous; for all $\m{z}$ and $\m{z}'$, there exists a constant $\ell_{f,0}$ such that
\[ \|f_t(\m{x}, \m{y}) - f_t(\m{x}', \m{y}')\| \leq \ell_{f,0} \|\m{z} - \m{z}'\|. \]
\item\label{assu:f:a3} $g_t(\m{x}, \m{y})$ is $\mu_{g}$-strongly convex in $\m{y}$; for all $\m{x} \in \mc{X}$ and $\m{y}, \m{y}' \in \mathbb{R}^d$, there exists a constant $\mu_g>0$ such that
\[ g_t(\m{x}, \m{y}') \geq g_t(\m{x}, \m{y}) + \langle \nabla_{\m{y}} g_t(\m{x}, \m{y}), \m{y}' - \m{y} \rangle + \frac{\mu_g}{2} \|\m{y} - \m{y}'\|^2. \]
\item\label{assu:f:a2} $\nabla f_t$, $\nabla g_t$, and $\nabla^2 g_t$ are respectively $\ell_{f,1}$, $\ell_{g,1}$, and $\ell_{g,2}$-Lipschitz continuous; for all $\m{z}$ and $\m{z}'$, there exist  constants $\ell_{f,1}, \ell_{g,1}, \ell_{g,2} $ such that
\begin{align*}
    \|\nabla f_t(\m{x}, \m{y}) - \nabla f_t(\m{x}', \m{y}')\| &\leq \ell_{f,1} \|\m{z} - \m{z}'\|, \\
    \|\nabla g_t(\m{x}, \m{y}) - \nabla g_t(\m{x}', \m{y}')\| &\leq \ell_{g,1} \|\m{z} - \m{z}'\|, \\
    \|\nabla^2 g_t(\m{x}, \m{y}) - \nabla^2 g_t(\m{x}', \m{y}')\| &\leq \ell_{g,2} \|\m{z} - \m{z}'\|.
\end{align*}
\end{enumerate}
\end{assumption}

Assumption~\ref{assu:f} necessitates well-behaved $\{(f_t,g_t)\}_{t=1}^T$, typical in offline BO~\citep[Assumptions~1 and 2]{chen2021closing}. Throughout, we use $\kappa_g:={\ell_{g,1}}/{\mu_g}$ to denote the condition number of online inner functions $\{g_t\}_{t=1}^T$.
\begin{assumption}\label{assu:dom}
The non-empty closed and convex decision set $\mc{X} \subseteq \mb{R}^{d_1}$ is bounded, i.e., $\|\m{x} -\m{x}'\| \leq D$ for any $\m{x}, \m{x}' \in \mc{X}$ and some $D>0$. Further, $\|\m{y}_1 -\m{y}_1^*(\m{x}_1)\| \leq D'$ for some $D'>0$.
\end{assumption}
Assumption~\ref{assu:dom} is similar to the existing assumptions on the decision set in online single-level learning \citep{Hazan16a,zinkevich2003online}.

\subsection{OBO with (Hyper-)Gradient Information}\label{sec:path-bound}

Perhaps the simplest algorithm that applies to the most general setting of online (single-level) optimization is OGD~\citep{zinkevich2003online}: For each $t \in [T]$, play $\m{x}_{t} \in \mc{X}$, observe the function $f_t$, and set
\begin{equation}\label{eqn:ogd}
\m{x}_{t+1} = \Pi_{\mc{X}}\big(\m{x}_t - \alpha_t \nabla f_t(\m{x}_t)\big), ~~~ \alpha_t>0, \tag{OGD}
\end{equation}
where $\Pi_{\mc{X}}$ is the projection onto $\mc{X}$.

We consider a natural extension of \ref{eqn:ogd} to the bilevel setting (containing inner and outer OGD) and demonstrate that it exhibits regret bounds based on the path-length of the inner and/or outer minimizer sequences. To do so, we need to compute the gradient of the outer objective (called hypergradient)  $\nabla f_t(\m{x}, \m{y}^*_t(\m{x}))$ where  $\m{y}^*_t(\m{x})$ is defined in \eqref{eqn:obl}.  The computation of $\nabla  f_t(\m{x}, \m{y}^*_t(\m{x}))$ involves Jacobian $\nabla_\m{xy}^2 g_t(\m{x},\m{y}^*_t(\m{x}))$ and Hessian $\nabla_\m{y}^2 g_t(\m{x},\m{y}^*_t(\m{x}))$. More concretely, since $\nabla_\m{y} g_t(\m{x}, \m{y}^*_t(\m{x})) =0$, it follows from  Assumption~\ref{assu:f} and the implicit function theorem 
\begin{equation*}
 \nabla \m{y}^*_t(\m{x}) \nabla^2_{\m{y}}g_t\left(\m{x},\m{y}^*_t(\m{x}) \right) +  \nabla^2_{\m{x}\m{y}} g_t \left(\m{x},\m{y}^*_t (\m{x}) \right)=0,
\end{equation*}
which together with the chain rule gives
\begin{align*}
\nonumber
 \nabla  f_t(\m{x}, \m{y}_t^*(\m{x}))&=  \nabla_\m{x} f_t \left(\m{x}, \m{y}^*_t (\m{x}) \right) 
 \\&+\nabla \m{y}^*_t(\m{x}) \nabla_\m{y} f_t\left(\m{x},\m{y}^*_t(\m{x}) \right).
\end{align*}
%
The exact gradient $\nabla f_t(\m{x}, \m{y}^*_t(\m{x}))$ is generally not available, preventing the use of gradient-type methods for bilevel regret minimization. In this work, inspired by offline bilevel optimization~\citep{domke2012generic,ghadimi2018approximation} and online single-level optimization ~\citep{hazan2017efficient,aydore2019dynamic}, we define a new \textit{time-averaged hypergradient} as a surrogate of $\nabla f_t(\m{x}, \m{y}^*_t(\m{x}))$ by replacing $\m{y}^*_t (\m{x}_t)$ with its estimation $\m{y}_t\in \mb{R}^{d_2}$ and using the history of the hypergradients.
\begin{defn}[\textnormal{\textbf{Time-Averaged Hypergradient}}]\label{def:avg:grad}
Given a window size $w \in[T]$, let $\{u_{i}\}_{i=0}^{w-1}$ be a positive decreasing sequence with $u_{0}=1$. Let $F_{t,\m{u}}(\m{x}, \m{y}) := (1/W) \sum_{i=0}^{w-1} u_{i}f_{t-i}(\m{x},\m{y})$ with $W=\sum_{i=0}^{w-1} u_{i}$ and the convention $f_{t} \equiv 0$ for $t\leq 0$. Let $\m{M}_t(\m{x}, \m{y})$ be the solution of the following linear equation:
\begin{equation*}
\m{M}_t (\m{x}, \m{y}) \nabla^2_{\m{y}}g_{t}\left(\m{x}, \m{y}\right)+\nabla^2_{\m{x}\m{y}} g_{t} \left(\m{x}, \m{y}\right) =0.
\end{equation*}
Then, the time-averaged hypergradient is defined as
\begin{align}\label{eqn:tave:grad}
 \tilde{\nabla} F_{t,\m{u}}(\m{x}, \m{y})   := \frac{1}{W} \sum_{i=0}^{w-1} u_{i} \tilde{\nabla}  f_{t-i}(\m{x}, \m{y}), 
\end{align}
where \begin{align}\label{eq:bar:gradient}
\hspace{-.2cm}\tilde{\nabla}f_t(\m{x},\m{y}) := \nabla_\m{x} f_t(\m{x},\m{y}) +\m{M}_t (\m{x}, \m{y}) \nabla_\m{y} f_t(\m{x},\m{y}).
 \end{align}
\end{defn}
\begin{rem}\label{rem:seq:ui}
$\tilde{\nabla} F_{t,\mathbf{u}}(\mathbf{x}, \mathbf{y})$ is defined using the hypergradients of the losses from the $w$ recent rounds. By setting $u_i = 1$, it averages a sliding window of online hypergradients at each update. With $u_i = \delta^i$ for $\delta \in (0,1)$, it emphasizes recent values, giving an exponential average of hypergradients. Although $\tilde{\nabla} F_{t,\mathbf{u}}(\mathbf{x}, \mathbf{y})$ seems computationally intensive for large $w$, the $w$ terms can be processed in parallel, mitigating the cost.
\end{rem}
%
\begin{algorithm}[t]
\caption{:~\pmb{OAGD}~for Bilevel Regret Minimization}\label{alg:obgd}
\begin{algorithmic}[1]
\REQUIRE{Initial values $(\m{x}_{1},\m{y}_1)\in \mc{X} \times \mb{R}^{d_2}$; parameters $w, T, K_1, K_2, \ldots, K_T \in \mb{N}$; stepsizes $\{(\alpha_t, \beta_t) \in \mb{R}_{++}^2\}_{t=1}^T$; and weights $\{u_{i}\}_{i=0}^{w-1}$ with $1 = u_{0} \geq u_1 \geq \ldots \geq u_{w-1} > 0$.}
\FOR{$t=1$ {\bfseries to} $T$}
    \STATE Acquire information about functions $f_t$ and $g_t$
    \STATE Set $\m{z}_t^1 \leftarrow \m{y}_{t}$
    \FOR{$k=1$ {\bfseries to} $K_t$}
        \STATE Update $\m{z}_{t}^{k+1} \leftarrow \m{z}_{t}^{k} - \beta_t \nabla_\m{z} g_t(\m{x}_t, \m{z}_{t}^{k})$
    \ENDFOR
    \STATE Update $\m{y}_{t+1} \leftarrow \m{z}_{t}^{K_t+1}$
    \STATE Update $\m{x}_{t+1} \leftarrow \Pi_{\mc{X}} \big[\m{x}_t - \alpha_t \tilde{\nabla} F_{t,\m{u}}(\m{x}_{t}, \m{y}_{t+1})\big]$
\ENDFOR
\end{algorithmic}
\end{algorithm}

The pseudo-code for the online alternating gradient descent (OAGD) method is presented in Algorithm~\ref{alg:obgd}. This algorithm is very simple to implement. At each timestep $t \in [T]$,  OAGD alternates between the gradient update on $\m{y}_t$ and the time-averaged projected hypergradient on $\m{x}_t$. One can notice that the alternating update in Algorithm~\ref{alg:obgd} serves as a template for running \ref{eqn:ogd} on OBO problems. In OAGD, $w$ and $u_i$ are tunable parameters. Remark~\ref{rem:seq:ui} and Theorem~\ref{thm:dynamic:nonc} provide suggested values for them. Intuitively, the value of $w$ captures the level of averaging (smoothness) of the hypergradient at round $t$. 

We note that OAGD is similar to single-level time-smoothing OGD-type methods for the outer variable update \citep{hazan2017efficient}. Also, without the inner variable and by setting the window size $w=1$, OAGD reduces to \ref{eqn:ogd}. It should be mentioned that $w>1$ is not required for our bilevel dynamic and static regret minimization. However, evaluations in Section~\ref{sec:exp} reveal that Equation \eqref{eqn:tave:grad} with $w>1$ provides a performance boost over the case $w=1$. Finally, we note that for $w=1$, Algorithm~\ref{alg:obgd} is similar to the gradient methods for offline BO \citep{ghadimi2018approximation,chen2021closing,ji2021bilevel}.
\begin{lem}\label{lem:lips}
Under Assumption~\ref{assu:f}, for all $t \in [T]$, $\m{x}, \m{x}' \in \mc{X}$,  and $\m{y} \in \mb{R}^{d_2}$, we have
\begin{subequations}\label{eqn:newlips}
\begin{align*}
&\left\|\m{y}^*_t(\m{x})-\m{y}^*_t(\m{x}')\right\|\leq  L_{\m{y}} \left\|\m{x}-\m{x}'\right\|,\\
&\|\tilde{\nabla} f_t(\m{x},\m{y}) - \nabla f_t(\m{x},\m{y}^*_t(\m{x}))\| \leq  M_f \left\|\m{y}-\m{y}^*_t(\m{x})\right\|,\\
\nonumber
&\left\|\nabla f_t(\m{x},\m{y}^*_t(\m{x}))- \nabla f_t(\m{x}',\m{y}^*_t(\m{x}'))\right\|\leq  L_f \left\|\m{x}-\m{x}'\right\|.
\end{align*}
\end{subequations}
Here, $L_{\m{y}}=\mc{O}(\kappa_g)$, $M_f=\mc{O}(\kappa_g^2)$, and $L_f= \mc{O}(\kappa_g^3)$.
\end{lem}
The proof of Lemma~\ref{lem:lips} is provided in Appendix \ref{C21}.
\subsection{Main Results}
This section presents the convergence results of the OAGD algorithm.
In Theorems~\ref{thm:dynamic:strong:B-OGD}--\ref{itm:thm:s:convex}, we simplify the analysis by setting $w=1$. For a summary and comparison of these results with the single-level setting, we refer to Table~\ref{table:blo2:results}. Proofs can be found in Appendix \ref{app:addend:sec:algplusreg}.
\begin{thm}[\textbf{Strongly-Convex Dynamic}]\label{thm:dynamic:strong:B-OGD}Suppose  Assumptions~\ref{assu:f}--\ref{assu:dom} hold and $\{ f_t(\m{x}, \m{y}^*_t(\m{x}))\}_{t=1}^T$ are $\mu_{f}$-strongly convex. Then, Algorithm~\ref{alg:obgd} with
\begin{align*}
&\beta_t=\beta= \frac{2}{\ell_{g,1}+\mu_{g}}, ~~\alpha_t=\alpha \leq \min\big\{\frac{1}{\ell_{f,1}}, \frac{\mu_f}{128M_f^2 L_{\m{y}}^2}\big\},\\
&K_t = K >  \left\lceil 0.25(\kappa_g+1) \log \big(4(\frac{1}{\alpha \mu_f}+2)^2 \big)\right\rceil, 
\end{align*}
for all $t \in [T]$, satisfies the follwoing 
\begin{equation}\label{eq:bdreg:bound:strong}
\begin{aligned}
\textnormal{BD-Reg}_T 
&\leq
\mc{O} \Big(1+\min\big\{S_{1,T},
\\&\sum^T_{t=1} \norm{\nabla f_t(\m{x}^*_t, \m{y}^*_t(\m{x}^*_t) )}^2+S_{2,T}\big\}\Big).
\end{aligned}
\end{equation}
\end{thm}
Theorem~\ref{thm:dynamic:strong:B-OGD} shows that Algorithm~\ref{alg:obgd}, using fixed step sizes and \( K_t = \tilde{\mc{O}} (\kappa_g) \), achieves a problem-dependent regret bound. While it might appear advantageous to increase \( K_t \), our analysis suggests that even as \( K_t \) approaches infinity, the regret bound only improves by a constant factor.  

In single-level online setting, \citep{zhang2017improved} shows that if  $\sum^T_{t=1} \norm{\nabla f_t(\m{x}^*_t )}^2=\mc{O}(P_{2,T})$, the dynamic regret bound of \ref{eqn:ogd} can be further improved to \( \mathcal{O} \left( 1 + \min\{ P_{1,T}, P_{2,T} \} \right) \) by allowing multiple gradient queries (resulting in multiple updates to \( \mathbf{x}_t \)). When \( Y_{1,T} = Y_{2,T} = 0 \), we achieve a similar dynamic regret bound \textit{without} multiple gradient queries.

If $f_t=f$ and $g_t=g$, then $S_{1,T}=S_{2,T}=0$ implies a regret bound of $\mathcal{O}(1)$, leading to convergence rates for offline bilevel gradient methods~\citep{ghadimi2018approximation}. If the difference between consecutive inner and outer arguments decreases as $1/t$, then $P_{1,T} = Y_{1,T} = \mathcal{O}(\log T)$, resulting in a logarithmic regret bound.

The following theorem provides the lower bound $\Omega(1+S_{2,T} )$ for OBO. 
\begin{thm}[\textbf{Lower Bound}]\label{thm:lower:bound}
For any OBO algorithm, there always exists a sequence of smooth and strongly convex functions $\{(f_t,g_t)\}_{t=1}^T$ such that
 \begin{equation*}
\textnormal{BD-Reg}_T=\Omega(1+S_{2,T}).
 \end{equation*}
\end{thm}
Theorem \ref{thm:lower:bound} indicates that the upper bound in Theorem \ref{thm:dynamic:strong:B-OGD} cannot be improved in general.  

\begin{thm}[\textbf{Strongly-Convex Static}]\label{thm:static:strong:B-OGD}
Suppose Assumptions~\ref{assu:f}--\ref{assu:dom} hold, and $\{ f_t(\m{x}, \m{y}^*_t(\m{x}))\}_{t=1}^T$ are $\mu_{f}$-strongly convex. Then, Algorithm~\ref{alg:obgd} with
\begin{align*}
&\beta_t=\beta= \frac{2}{\ell_{g,1}+\mu_{g}},~~\alpha_{t} = \frac{2}{\mu_f t}, \\
&K_t = K >   \Big\lceil 0.25(\kappa_g+1) \log \big((\frac{24L_\m{y}M_f}{\mu_f})^2 +2 \big) \Big\rceil ,
\end{align*}
for all $t \in [T]$, satisfies the follwoing 
\begin{equation}\label{eq:bsreg:bound:strong}
\begin{split}
\textnormal{BS-Reg}_T \leq \mc{O}\left(\log T+ \bar{Y}_{2,T} \right).
\end{split}
\end{equation}
\end{thm}

Theorem~\ref{thm:static:strong:B-OGD} shows that Algorithm~\ref{alg:obgd}, with decreasing $\alpha_t$ and \( K_t = \tilde{\mc{O}}(\kappa_g) \), achieves a problem-dependent regret bound, where the \(\log T\) term mirrors single-level static  findings \citep{hazan2007logarithmic}, and \( \bar{Y}_{2,T}\) accounts for the variability in \(\{\m{y}^*_t(\m{x}^*)\}_{t=1}^T\) over \(T\).

The following theorem provides the regret bounds for online convex functions $\{f_t\}_{t=1}^T$ in the dynamic setting.

\begin{thm}[\textbf{Convex Dynamic}]\label{thm:dynamic:convex:B-OGD}  Suppose Assumptions~\ref{assu:f}--\ref{assu:dom} hold, $\{ f_t(\m{x}, \m{y}^*_t(\m{x}))\}_{t=1}^T$ are convex and $ \exists~(\m{x}^*_{t}, \m{y}^*_t(\m{x}^*_{t})) \in \mc{X} \times \mb{R}^{d_2}$ such that $\nabla f_t(\m{x}^*_t, \m{y}^*_t(\m{x}^*_t) )=0 $ for all $t \in [T]$. Then, Algorithm~\ref{alg:obgd} with 
\begin{align*}
&\beta_t=\beta= \frac{2}{\ell_{g,1}+\mu_{g}},~~ \alpha_t=\alpha \leq \frac{1}{4L_f},
\\&K_t>   \left\lceil 0.25 ( \kappa_g+1) \log 4t^2\right\rceil,
\end{align*}
 for all $t \in [T]$, satisfies the following
\begin{align}\label{eqn:convex:dbound}
\textnormal{BD-Reg}_T& \leq \mc{O} \left( 1+S_{1,T}+Y_{2,T}\right).
\end{align}
\end{thm}

From Theorem~\ref{thm:dynamic:convex:B-OGD} we see that Algorithm~\ref{alg:obgd} achieves an $\mc{O} \left(1+S_{1,T}+Y_{2,T}\right)$ dynamic regret for a sequence of loss functions that satisfy Assumption~\ref{assu:f} with only  gradient feedback. 
Note that the condition $\nabla f_t(\m{x}^*_t, \m{y}^*_t(\m{x}^*_t) )=0$ is referred to as the vanishing gradient condition, which is widely used in the analysis of OGD methods in the single-level convex setting~\cite[Assumption 2]{yang2016tracking}. 

\begin{thm}[\textbf{Convex Static}]\label{itm:thm:s:convex} Suppose Assumptions~\ref{assu:f}-\ref{assu:dom} hold and functions $\{ f_t(\m{x}, \m{y}^*_t(\m{x}))\}_{t=1}^T$ are convex.  Then, Algorithm~\ref{alg:obgd} with
\begin{align*}
 &\beta_t=\beta= \frac{2}{\ell_{g,1}+\mu_{g}}, ~~\alpha_t =\frac{D}{\ell_{f,0} \sqrt{t}},
 \\&K_t>  \left\lceil 0.25 ( \kappa_g+1) \log 4t^2\right\rceil,
\end{align*}
for all $t \in [T]$, satisfies the following 
\begin{align}\label{eqn:convex:sbound}
\textnormal{BS-Reg}_T& \leq   \mc{O} \left( \sqrt{T} + \bar{Y}_{1,T}+\bar{Y}_{2,T}\right). \end{align}
\end{thm}
Theorem~\ref{itm:thm:s:convex} provides static bounds when the cost functions are convex. We observe that the term $\sqrt{T}$ is identical to the bound in the single-level static setting \citep{zinkevich2003online}, and $\bar{Y}_{1,T}$ and $\bar{Y}_{2,T}$ account for the variability in $\{\m{y}^*_t(\m{x}^*)\}_{t=1}^T$ over $T$. Additionally, we note that $\bar{Y}_{1,T}$ and $\bar{Y}_{2,T}$ are not generally comparable; see Example~\ref{exm:not:comparable} in Appendix \ref{sec:example} for further discussion.

\subsubsection{Local Regret Minimization}\label{sec:nonconv}

In this section, we consider online bilevel learning with non-convex outer losses. While minimizing the regret \eqref{reg:bilevel:dynamic} makes sense for online convex functions $\{f_t\}_{t=1}^T$, it is not appropriate for general non-convex online costs, as the global minimization of a non-convex objective is generally intractable. We address this issue with a combined approach, leveraging optimality criteria and measures from offline non-convex bilevel analysis, together with smoothing of the online part of the outer objective function similar to \citep{hazan2017efficient}. Throughout this section, we set $\mc{X}\equiv\mb{R}^{d_1}$.

For the sequence $\{u_i\}_{i=0}^{w-1}$ given in Definition~\ref{def:avg:grad} and for all $w \in[T]$,  we define the following \textit{bilevel local regret}:
\begin{align}\label{reg:non:bilevel:dynamic}
\textnormal{BL-Reg}_{T,\m{u}} :=
\sum_{t=1}^T  \big\| \nabla F_{t,\m{u}} (\m{x}_{t}, \m{y}_{t}^*(\m{x}_{t})) \big\|^2.
\end{align}
Here, $\m{y}^*_t(\m{x}) \in \argmin_{\m{y} \in  \mb{R}^{d_2}} g_t(\m{x}, \m{y})$, and
\begin{align*}
 F_{t,\m{u}} (\m{x}_{t}, \m{y}_{t}^*(\m{x}_{t}))=  \frac{1}{W} \sum_{i=0}^{w-1} u_{i}  f_{t-i}(\m{x}_{t},\m{y}^*_{t}(\m{x}_{t})) 
\end{align*}
 with the convention $f_{t} \equiv 0$ for $t\leq 0$.

Note that in the single-level setting, \eqref{reg:non:bilevel:dynamic} with $ u_{i}=1$ simplifies to the local regret in \citep{hazan2017efficient}. 

For local regret analysis, we use $H_{p,T}$, as defined in \eqref{eqn:static:inner:paths}, to measure the variation of $\m{y}^*_{t}(\m{x})$. We introduce $H_{p,T}$ to account for cases where its value is inherently small. For instance, in the online problems discussed in Section~\ref{sec:exp}, $H_{p,T}$ represents model variability over $T$, which can be small for a good range of hyperparameters $\m{x} \in\mb{R}^{d_1}$.

\begin{assumption}\label{assu:f:b}
For all $t \in [T]$, $|f_t(\m{x}, \m{y} )| \leq M$ for some finite constant $M > 0$.
\end{assumption}
Assumption~\ref{assu:f:b} is widely used in the online learning literature \citep{hazan2017efficient}. The following theorem demonstrates the sublinear local regret of OAGD.
\begin{thm}[\textbf{Non-convex Local}]\label{thm:dynamic:nonc}
Suppose Assumptions~\ref{assu:f} and \ref{assu:f:b} hold. Then, Algorithm~\ref{alg:obgd} with
\begin{align*}
&\beta_t=\beta = \frac{2}{\ell_{g,1}+\mu_g},~~  K_t=1,
\\&\alpha_t=\alpha    \leq  \min\Big\{\frac{1}{8L_f},\frac{1}{2\sqrt{2}L_{\m{y}} M_f(\kappa_g^2-1  )^{1/2}}\Big\},
\end{align*}
 for all $t \in [T]$,  satisfies the following
\begin{align} \label{eqn:localreg:bound}
\textnormal{BL-Reg}_{T,\m{u}} &\leq \mc{O}\Big( \frac{ T}{ W}+  H_{1,T}+   H_{2,T}\Big).
\end{align}
\end{thm}
The above regret can become sublinear in \( T \) provided \( H_{1,T} = o(T) \), \( H_{2,T} = o(T) \),  and the weight \( w \) is appropriately chosen so that $W=o(T)$. Theorem~\ref{thm:dynamic:nonc} aligns closely with the existing bounds in various non-convex optimization contexts. In the OBO setting, it parallels \cite[Theorem 5.7]{lin2024non}, yet it does not require monitoring fluctuations between online objective functions. When \( H_{p,T} = 0 \), as in a single-level setting, Theorem~\ref{thm:dynamic:nonc} matches the findings of \citep{hazan2017efficient} but applies to a broader range of weight sequences $\{u_{i}\}_{i=0}^{w-1}$. It is important to note that when \( u_i = 1 \) for all \( i \in \{0, \ldots,w-1\}\), it yields a local regret bound of \( \mathcal{O}( T/w + H_{1,T} + H_{2,T} ) \). For the offline case where \( f_t = f \), the results provide a convergence guarantee for non-convex BO \citep{ghadimi2018approximation}.

\begin{figure*}[htbp]
		\includegraphics[width=5.6cm]{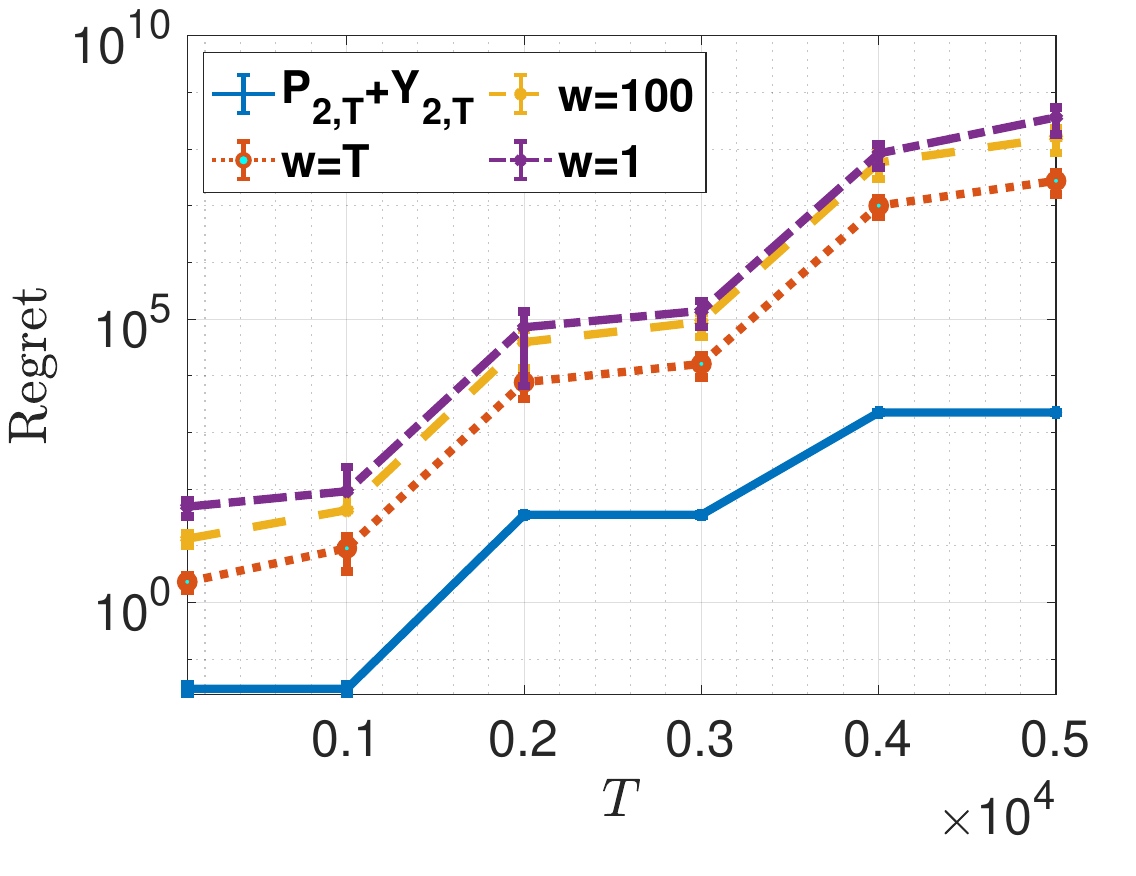}
		\hspace{.05em}
 		\includegraphics[width=5.6cm]{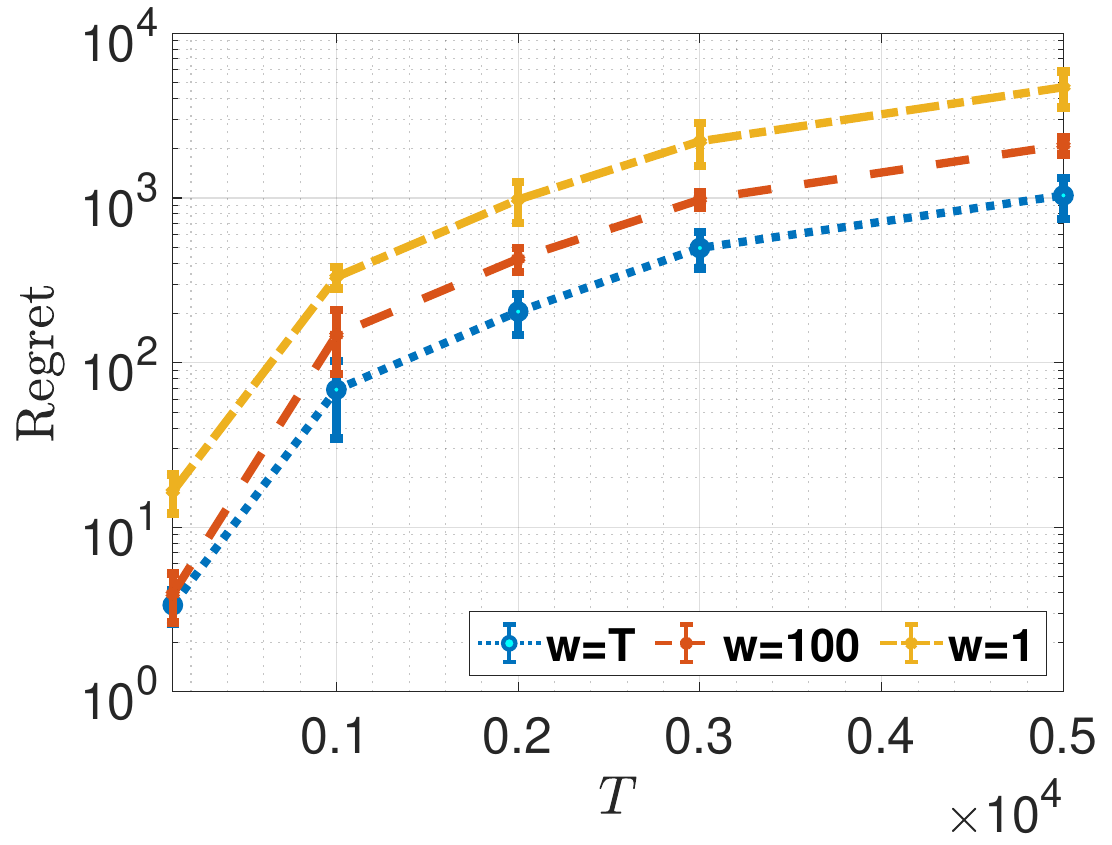} 
   	\hspace{.05em}
   \includegraphics[width=5.6cm]{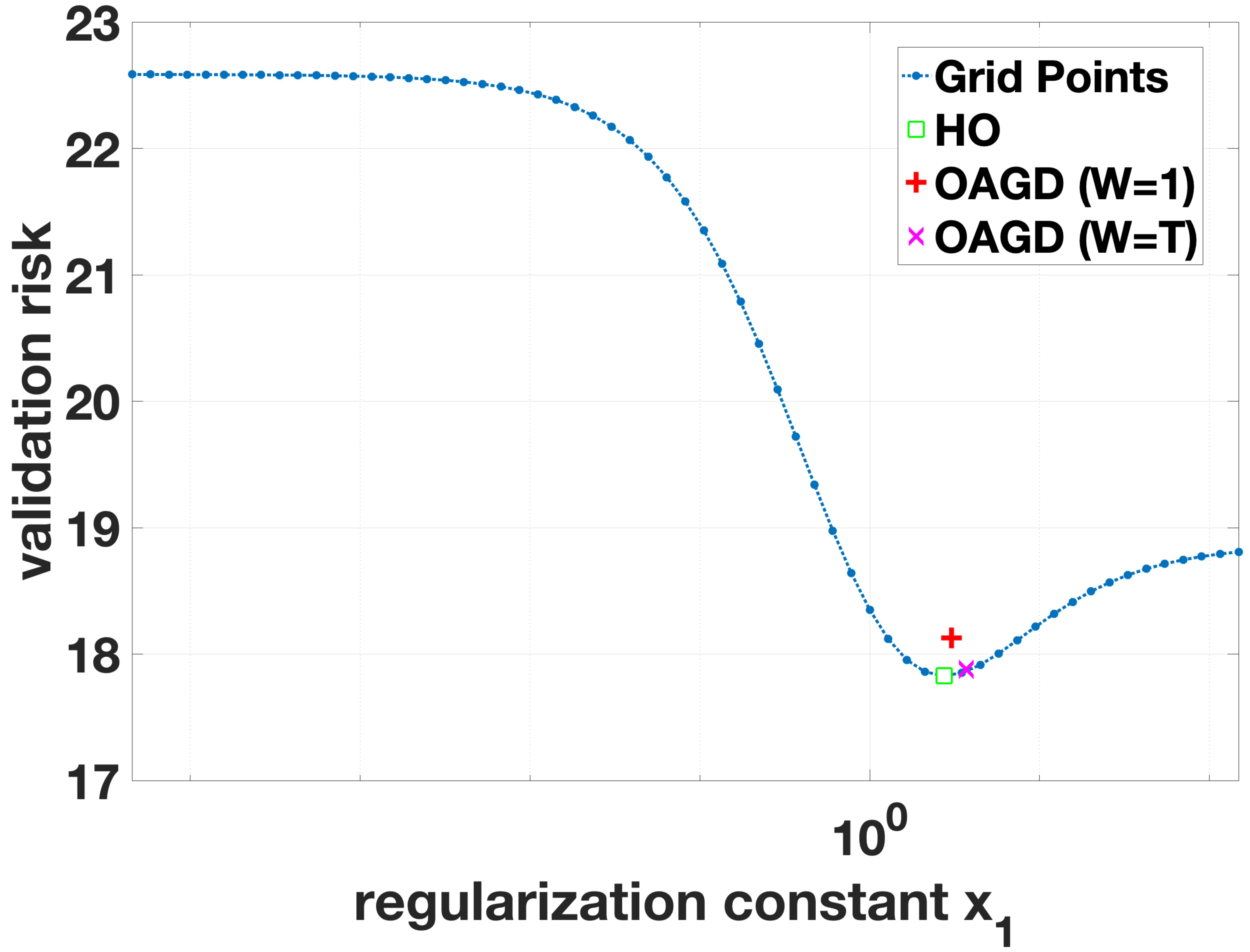}
\caption{Performance of OAGD in online hyperparameter learning over five runs. The left and middle figures show OBO's regret with three comparators and a fixed comparator, respectively. The right figure illustrates the outer problem's trajectories and the performance of OAGD and offline HO in learning the hyperparameter $x_1$.}
  \label{fig:app:onl}
\end{figure*}

\section{Experimental Results}\label{sec:exp}
In this section, we conduct preliminary experiments to evaluate OAGD performance, with additional experiments available in  Appendix \ref{app:sec:experiments}. Code is available at \url{https://github.com/BojianHou/OAGD}. 
\subsection{Online Hyperparameters Learning for
Dynamic Regression}\label{sec:exp:dynamic:regression}
Hyperparameter optimization (HO) is the process of finding the best set of hyperparameters that cannot be learned using the training data alone  \citep{franceschi2018bilevel}. An HO problem can be formulated as a BO problem. The outer objective, $f(\m{y}^*(\m{x});\mc{D}^{\text{val}})$, aims to minimize the validation loss concerning the hyperparameters $\m{x}$. Meanwhile, the inner objective, $g(\m{x},\m{y};\mc{D}^{\text{tr}})$, minimizes the training loss concerning the model parameters $\m{y}$. 

We consider online HO for dynamic regression as follows: At each round or timestep $t$, new samples $(\m{a}_t, b_t) \in \mc{D}_t:= \{\mc{D}^{\text{val}}_t, \mc{D}^{\text{tr}}_t\}$ for all $t \in [T]$ are received, where $\m{a}_t \in \mb{R}^{d_2}$ represents the feature vector and $b_t \in \mb{R}$ is the corresponding target. It's important to note that the potential correct decision can change abruptly. Specifically, we consider an $S$-stage scenario where $(\m{x}_s^*,\m{y}_s^*(\m{x}_s^*))$ represents potentially the best decisions for the $s$-th stage, encompassing all $s\in[S]$:
\begin{align}\label{eqn:onl:hyper}
\begin{array}{ll}
&\m{x}^*_s \in \underset{\m{x} \in\mc{X}}{\textnormal{argmin}} 
\begin{array}{c}
 \sum_{t=1}^{T_s} f\left(\m{y}^*_s(\m{x}); \mc{D}^{\text{val}}_t \right)
\end{array}\\
&\text{s.t.}  \begin{array}[t]{l} \m{y}^*_s(\m{\m{x}})
\in \underset{ \m{y}\in \mb{R}^{{d}_2}}{\textnormal{argmin}}~~\sum_{t=1}^{T_s} g\left(\m{x},\m{y};  \mc{D}^{\text{tr}}_t\right). 
\end{array}
\end{array}
\end{align}
At each round $t$ of online HO, given a sample $(\m{a}_t, b_t) \in \mc{D}^{\text{tr}}_t$, the follower is required to make the prediction by $\m{a}^\top_t \m{y}_t$ based on the learned inner and outer models $(\m{x}_{t-1},\m{y}_{t-1}) \in \mc{X} \times \mb{R}^{d_2}$; then, as a consequence
the follower suffers a loss $g(\m{x}_{t-1},\m{y}_t; \mc{D}^{\text{tr}}_t)=1/2 (\m{a}^\top_t \m{y}_t-b_t)^2 + \m{y}^\top_t \m{C}(\m{x}_{t-1}) \m{y}_t$, where $\m{C}(\m{x}):=\textnormal{diag}(\exp(x_i))_{i=1}^{d_1}$. The leader then receives the feedback of the inner model, i.e., $\m{y}_{t}$, predicts the new hyperparameter $\m{x}_t$ using a validation sample $(\m{a}_t, b_t) \in \mc{D}^{\text{val}}_t$, and suffers the loss $f(\m{y}_t(\m{x}_t); \mc{D}^{\text{val}}_t)= 1/2(\m{a}^\top_t \m{y}_t(\m{x}_t)-b_t)^2$. This process repeats across $T=\{T_1, \ldots, T_S\}$ rounds. 

Figure~\ref{fig:app:onl} (left and middle) shows the variation $P_{2,T}+Y_{2,T}$ and the regret bound of OAGD with three different window sizes $w\in\{1,100, T\}$ on synthetic data; see Appendix~\ref{app:sec:experiments:dynamic:regression} for further details. We observe that OAGD with $w=T$ performs the best, with a gradual decrease in performance as $w$ decreases to $100$ and $w=1$. Additionally, Figure~\ref{fig:app:onl}  (right) demonstrates that the performance of OAGD is comparable to the performance of the offline HO \citep{franceschi2018bilevel}. 

\subsection{Online Parametric Loss Tuning for
Imbalanced Data}
\begin{figure*}
\begin{center}
\includegraphics[scale=0.34]{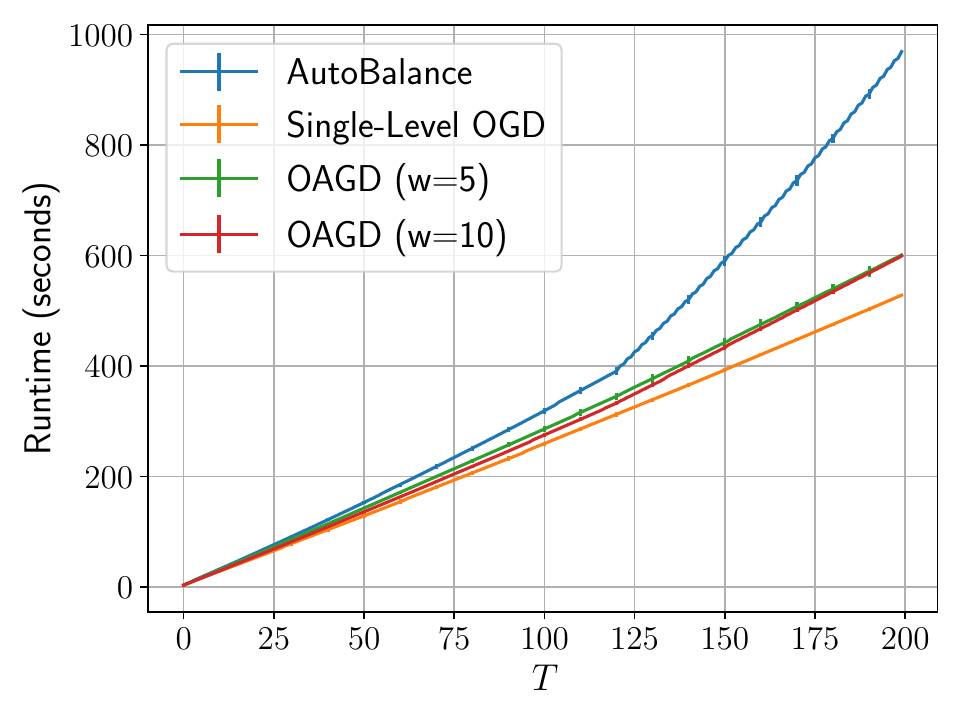}
\includegraphics[scale=0.34]{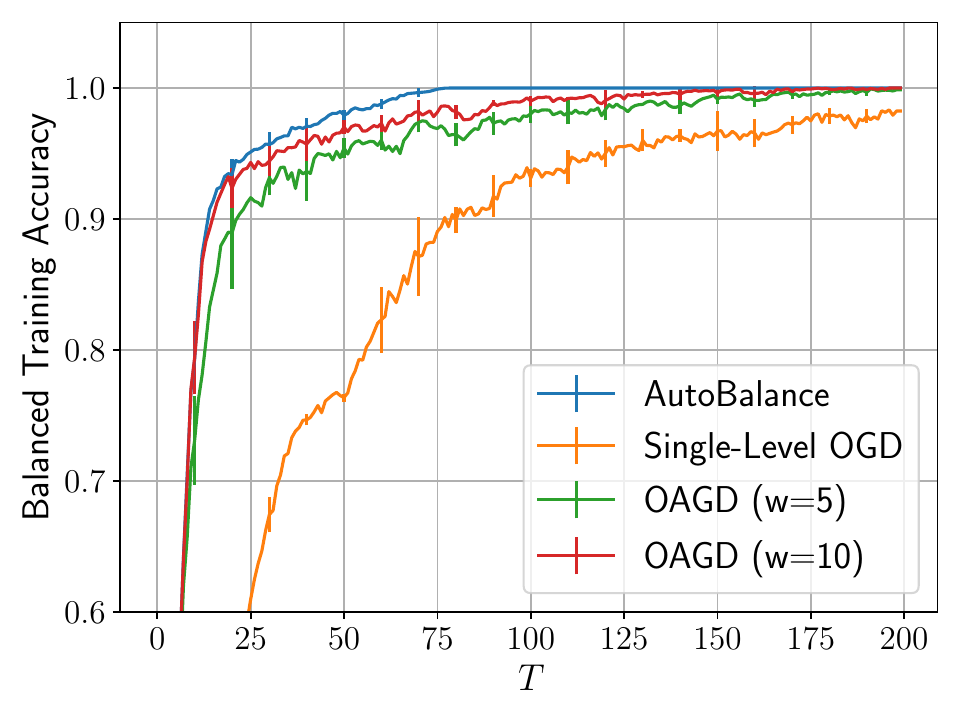} 
\includegraphics[scale=0.34]{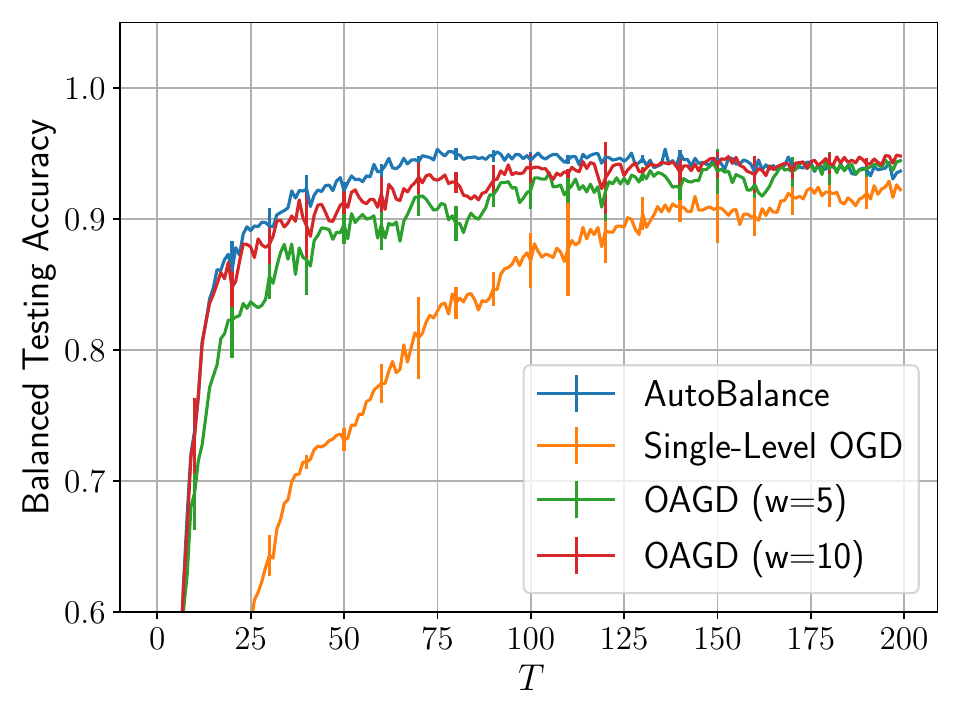}
\vspace{-.25cm}
\caption{\label{fig:imbalance}Performance comparison (mean$\pm$std) on loss tuning for imbalanced MNIST data across five runs. 
}  
\end{center}
\vspace{-4pt}
\end{figure*}
\begin{figure*}
\begin{center}
\includegraphics[scale=0.34]{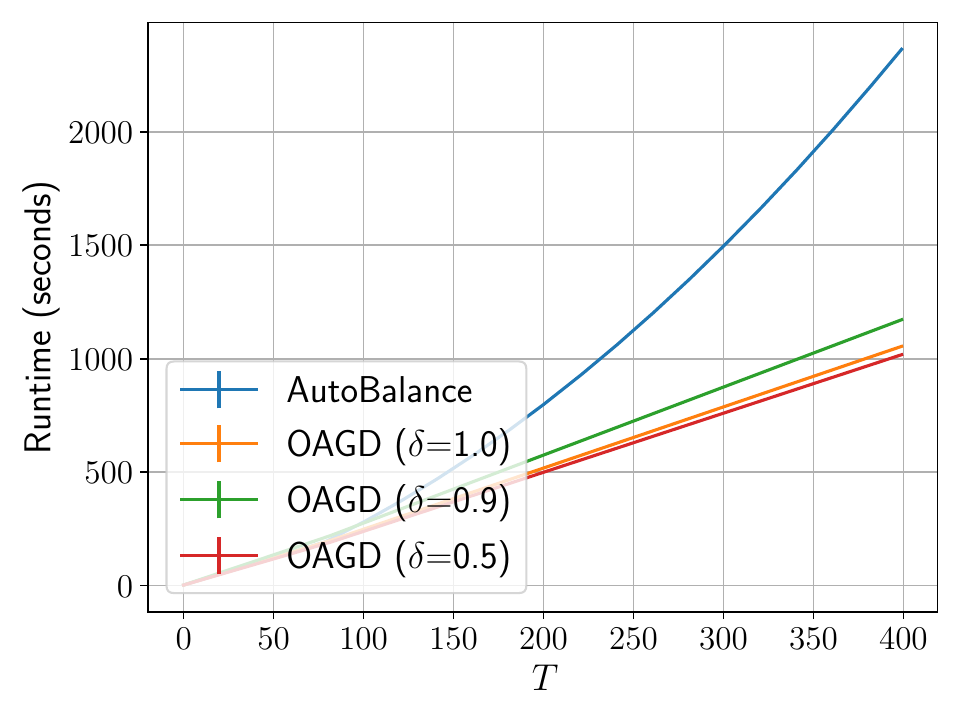}
\includegraphics[scale=0.34]{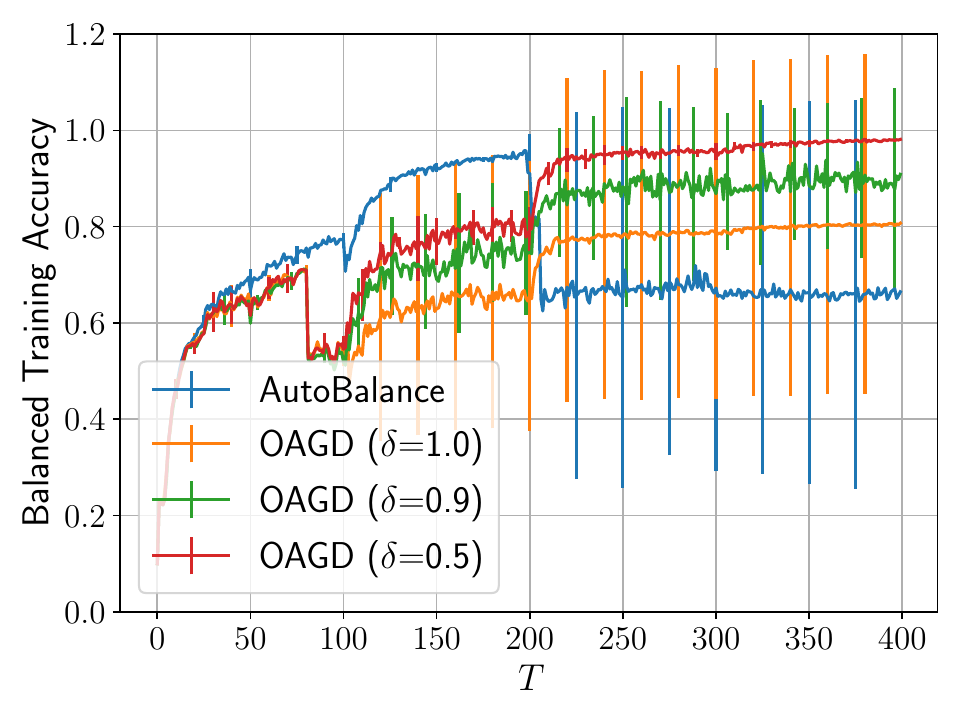} 
\includegraphics[scale=0.34]{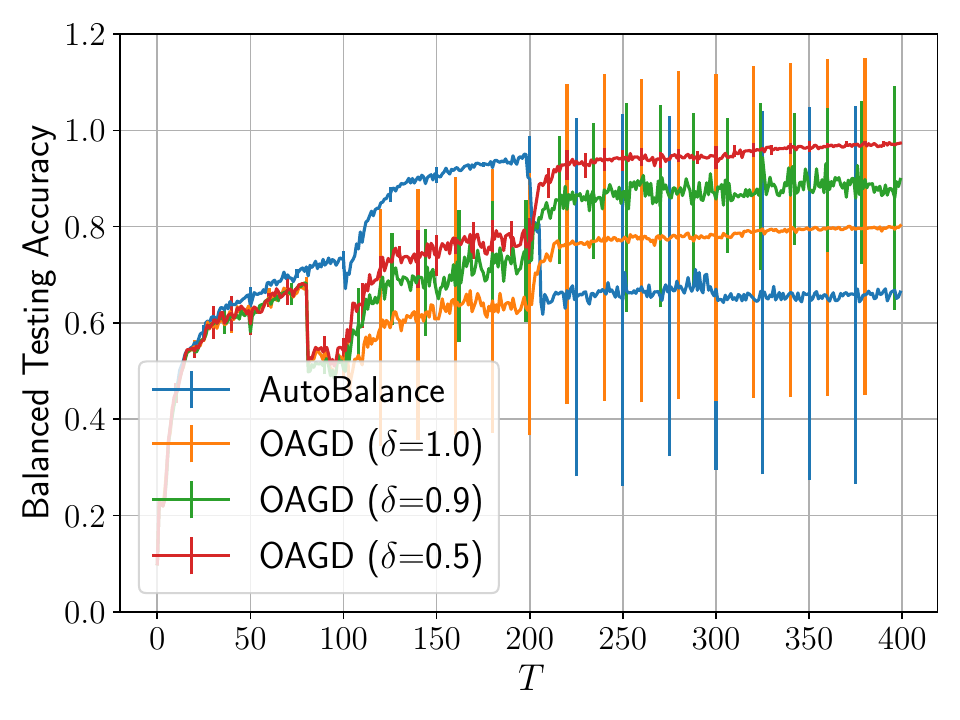}
\vspace{-.25cm}
\caption{Performance comparison (mean$\pm$std) on  loss tuning for imbalanced MNIST data across five runs, considering induced \textbf{distribution shift}. 
}
\label{fig:imbalance_dist_shift} 
\vspace{-.25cm}
\end{center}
\vspace{-.25cm}
\end{figure*}
\vspace{-.25cm}
Imbalanced datasets are common in modern machine learning, posing challenges in generalization and fairness due to underrepresented classes and sensitive attributes. This issue is exacerbated by deep neural networks' tendency to overfit, appearing accurate and fair during training but performing poorly during testing. AutoBalance~\citep{li2021autobalance} addresses this by automatically designing a parametric training loss to balance accuracy and fairness while preventing overfitting. We give an online variant of AutoBalance, demonstrating the enhanced performance of OAGD in this setting.

The bilevel objective function for loss tuning is the same as (\ref{eqn:onl:hyper}) but the leader's and the follower's loss functions are defined differently. At each round or timestep $t$, new samples $(\m{a}_t, b_t) \in \mc{D}_t:= \{\mc{D}^{\text{val}}_t, \mc{D}^{\text{tr}}_t\}$ for all $t \in [T]$ are received, where $\m{a}_t \in \mb{R}^{d_2}$ represents the feature vector and $b_t \in \{1,\ldots, J\}$ represents the corresponding label. For  a new sample $(\m{a}_t, b_t)$, the follower suffers from a \textit{parametric} cross-entropy loss:
\begin{subequations}\label{auto:main:obj}
\begin{equation}
g(\m{x}_{t-1},\m{y}_t; \mc{D}^{\text{tr}}_t)=-\log  \frac{e^{\gamma_{b_t} [\m{y}_t(\m{a}_t)]_{b_t} + \Delta_{b_t}}}{\sum_{j = 1}^{J}e^{\gamma_j [\m{y}_t(\m{a}_t)]_j + \Delta_j}},
\end{equation}
where $\m{x}_{t-1}:=(\Delta_j,\gamma_j)_{j=1}^J$ represents the \textit{logits}. 

In the outer-level, the leader suffers from a balanced cross entropy loss
\begin{equation}
f(\m{y}_t(\m{x}_t); \mc{D}^{\text{val}}_t)=-u_{b_t}\log  \frac{e^{[\m{y}_t(\m{a}_t)]_{b_t}}}{\sum_{j = 1}^{J}   e^{[\m{y}_t(\m{a}_t)]_{j}}}, 
\end{equation}
\end{subequations}
where $u_{j}$ represents the reciprocal of the proportion of samples from the $j$-th class to the total number of samples \citep{li2021autobalance}. 

There might be one notation abuse in \eqref{auto:main:obj} that we need to clarify: $\m{y}_t(\m{x}_t)$ still indicates that the follower $\m{y}_t$ is conditioned on the leader $\m{x}_t$, whereas $[\m{y}_t(\m{a}_t)]_{b_t}$ denotes the predicted logit for class $b_t$ that the follower $\m{y}_t$ makes on sample $\m{a}_t$. Note that the backbone model for $\m{y}_t$ is a 4-layer CNN, resulting in a nonconvex bilevel objective.  For more details, refer to Appendix \ref{app:sec:experiments}.

We compare Algorithm~\ref{alg:obgd} with the following baselines:

\vspace{-2mm}
\begin{enumerate}[wide, labelindent=0pt, itemsep=0pt]
    \item[-]  \textbf{Single-Level OGD}~\citep{zinkevich2003online}: Updates the model $\mathbf{y}_t$ with fixed hyperparameters $\mathbf{x}$ at each timestep on the newly observed data using gradient descent.
    \item[-]  \textbf{AutoBalance}~\citep{li2021autobalance}: An offline bilevel gradient descent framework that updates hyperparameters $\mathbf{x}_t$ and the model $\mathbf{y}_t$ to address imbalance issues.
\end{enumerate}
\vspace{-2mm}
\begin{figure*}[t]
\begin{center}
\includegraphics[scale=0.34]{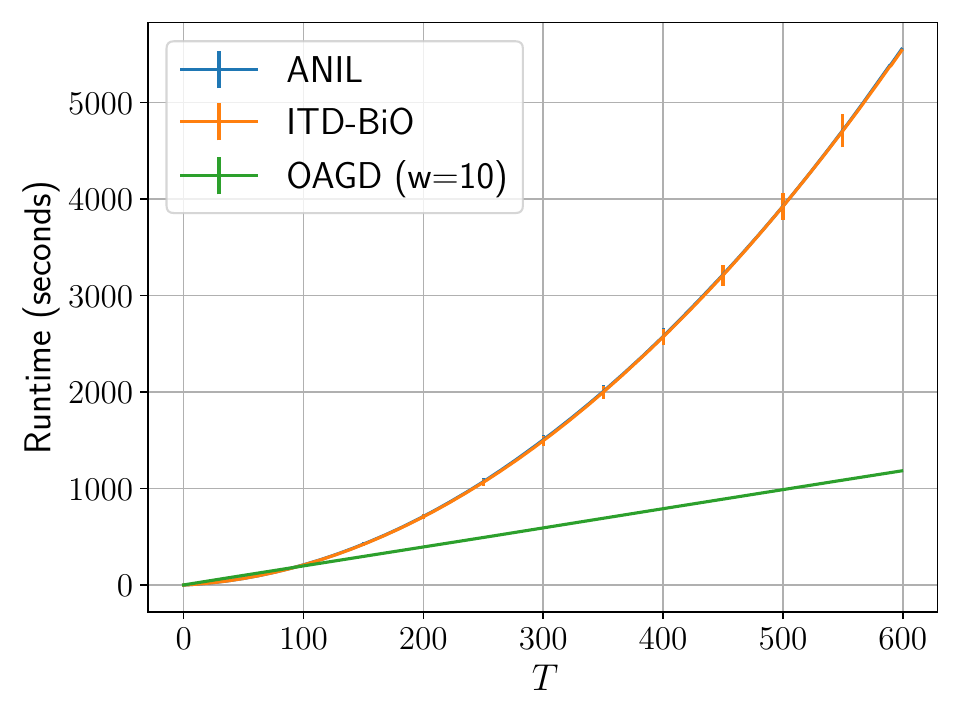}
\includegraphics[scale=0.34]{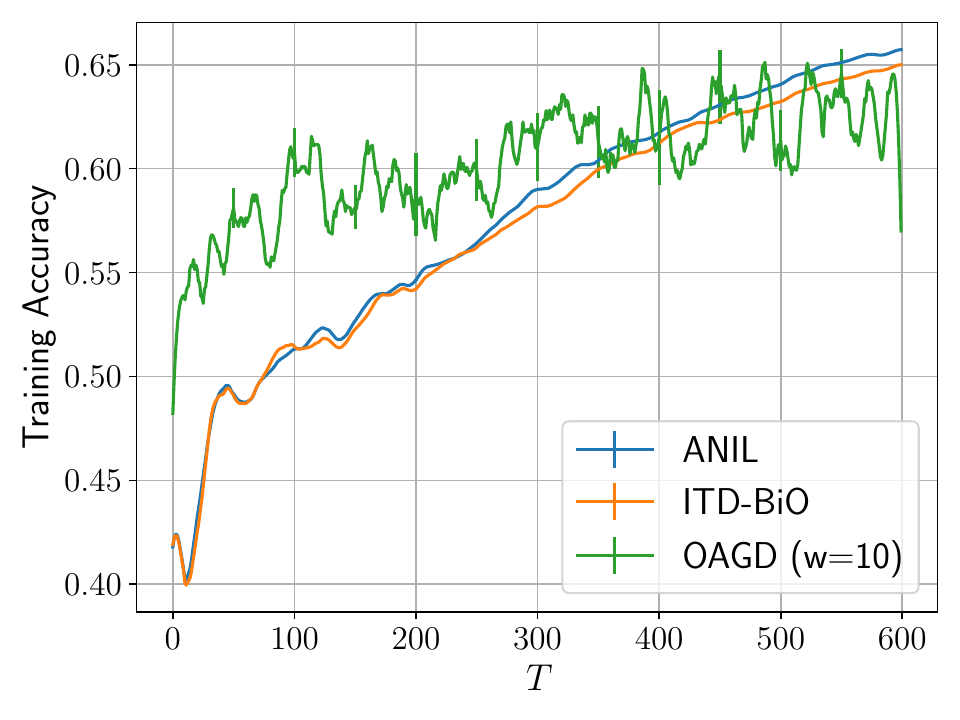} 
\includegraphics[scale=0.34]{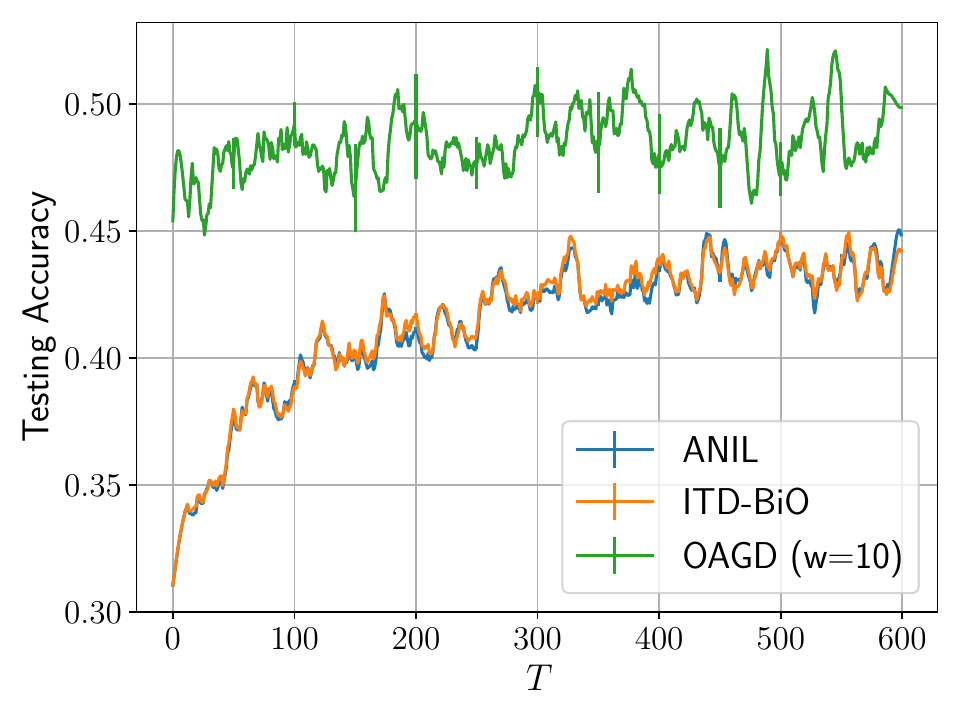}
\vspace{-.25cm}
\caption{\label{fig:meta learning}Performance comparison (mean$\pm$std) on meta-learning for FC100 data across five runs. 
}
\end{center}
\vspace{-.3cm}
\end{figure*}
\vspace{-.2cm}

We conducted experiments using the MNIST dataset~\citep{lecun2010mnist}. To create an imbalanced scenario, we selected samples in proportions of $0.6^i$ from each class ($i=0,1, \ldots, 9$). For online learning, we used a batch size of 128 at each timestep to train our OAGD. If the window size $w$ exceeded 1, we combined the current batch with the previous $w-1$ batches for OAGD training.  
We evaluated cumulative runtime, along with balanced training and testing accuracy, where balanced accuracy is the class-specific average accuracy: $\frac{1}{J}\sum_{j=1}^{J}{\mathbb{P}_{\m{a}_t\sim\mathcal{D}_j}[\argmax_i([\m{y}_t(\m{a}_t)]_i)=j]}$ where $\mathcal{D}_j$ refers to the distribution over samples whose groundtruth class label is $j$ and $\mathbb{P}[A]$ means the probability of event $A$ \citep{li2021autobalance}.
To ensure consistency, we maintained a fixed inner-level learning rate of $\beta=0.1$ for all bilevel algorithms and single-level OGD, with the outer-level learning rate set at $\alpha=0.001$.

Figure~\ref{fig:imbalance} (left) provides runtime comparisons. The single-level OGD algorithm is the fastest since it lacks an outer-level training step and trains on a single batch of data at each timestep. Our OAGD exhibits similar runtime characteristics, with the runtime increasing as the window size $w$ grows due to more extensive training. In contrast, AutoBalance is the slowest method as it trains on all observed data up to each timestep.

Figure~\ref{fig:imbalance} (middle and right) displays balanced training and testing accuracy. AutoBalance quickly achieves high accuracy  after 10 timesteps, whereas single-level OGD exhibits slower improvement. OAGD ($w=5$) and OAGD ($w=10$) exhibit rapid growth in both testing and training accuracy, eventually outperforming AutoBalance. They benefit from time-smoothing hypergradients. Larger window sizes further enhance OAGD's balanced training and testing accuracy.

We conducted experiments on the MNIST dataset to evaluate performance under time-varying distribution shifts across 400 timesteps, divided into four phases of varying distributions, each lasting 100 timesteps. Initially, the distribution was highly imbalanced, with class proportions set by $0.4^i$ for classes $i=0$ to $9$. This imbalance gradually lessened over the next two phases, changing from $0.6^i$ to $0.8^i$. The final 100 timesteps featured a balanced distribution across the 10 MNIST classes, ensuring normalized class proportions throughout all phases.

The parameter $\delta$ influences the weighting of each window in the ``time-averaged hypergradient,'' per Definition \ref{def:avg:grad}. Setting $u_i = 1$ computes the average of online hypergradients in a sliding window. For $u_i = \delta^i$ with $\delta \in (0,1)$, it weights recent values more through an exponential average. Experiments with $\delta$ values of 1, 0.9, and 0.5 show that lower $\delta$ gives more emphasis to recent windows.

The results, illustrated in Figure~\ref{fig:imbalance_dist_shift}, show a significant performance decrease for AutoBalance at timestep $T=200$, while OAGD remains comparatively stable against distribution shifts. At $T=80$, OAGD experiences a noticeable drop due to shifting from a single-level method with fixed hyperparameters (warm-up phase) to a bilevel framework that starts adjusting hyperparameters at $T=80$, causing a short disruption. This warm-up phase is standard in bilevel optimization \citep{li2021autobalance,lorraine2020optimizing}. Although larger $w$ are beneficial for gradient accuracy, they can produce outdated gradients if data distribution changes, negatively affecting model updates. In contrast, a smaller $\delta$ can improve adaptability and performance by focusing on recent data, even with large window sizes $w$.

\subsection{Online Meta-Learning}

Meta-learning aims to bootstrap from a set of given tasks to learn faster on future tasks~\citep{finn2017model,balcan2019provable}. A popular formulation is online meta-learning (OML) where agents sequentially face tasks and apply methods such as classical FPL ~\citep{finn2019online} or mirror descent  \citep{denevi2019online} for enhanced meta-learning. We consider an implicit gradient-based OML setting: for each task $\mc{T}_t$ and some  $\beta>0$, the follower adapts the leader's model $\m{w}_t \in \mc{W} \subset \mb{R}^{d}$ using training data $\mc{D}^{\text{tr}}_t$ and inner OGD:
\begin{align*}
\m{u}^*_t(\m{w}_t)\in \argmin_{\m{u}\in \mb{R}^{d}}\left\langle \m{u}, \nabla f (\m{w}_t; \mc{D}^{tr}_t)\right\rangle + \frac{1}{2\beta} \left\| \m{u}-\m{w}_t\right\|^2. 
\end{align*}
 Then, the test data $\mc{D}^{\text{ts}}_t$ will be revealed to the leader for evaluating the performance of the follower's model $\m{u}^*_t(\m{w}_t)$. The loss observed at this timestep, denoted as $f(\m{u}^*_t(\m{w}_t);\mc{D}^{\textnormal{ts}}_t)$, can then be fed into the leader's algorithm (outer OGD) to update $\m{w}_t$. 
Despite the convex nature of the loss function, which is a cross-entropy loss, where $\m{w}$ represents a 4-layer CNN, ultimately rendering the outer problem non-convex. Further, the inner problem for loss tuning involves training a CNN and is non-convex, showing our implementation’s wide scope.
We compare our {OAGD} with the following meta-learning methods:
\vspace{-.1cm}
\begin{enumerate}[wide, labelindent=0pt, itemsep=0pt]
    \item[-] \textbf{ANIL}~\citep{raghu2019rapid}: A widely used meta-learning algorithm, which simplifies MAML by removing the inner loop for all parts of the MAML-trained network except for the task-specific head.
    \item[-] \textbf{ITD-BiO}~\citep{ji2021bilevel}: A gradient-based stochastic bilevel optimization framework based on iterative differentiation (ITD).
    \vspace{-.1cm}
\end{enumerate}
\vspace{-.2cm}
We evaluate our model on the FC100 (Fewshot-CIFAR100) dataset~\citep{oreshkin2018tadam} through a 5-way 5-shot task.
 In the online setting, each timestep presents one task with 25 training and 25 testing samples. If the window size exceeds 1, data from up to $w-1$ previous tasks are included. In contrast, the offline setting allows baselines to use all observed data up to the current timestep. The inner and outer learning rates for ANIL and ITD-BiO are $0.01$ and $5e-5$, respectively, while our OAGD employs learning rates of 0.1 and $1e-4$. These experiments were conducted on a P100 GPU equipped with 12 GB of memory.

In Figure~\ref{fig:meta learning}, we provide the performance comparison in terms of runtime, training accuracy, and testing accuracy. We compare only our OAGD ($w=10$) to other baselines to enhance the precision of the figure. For the sensitivity analysis concerning the window size, please refer to the Appendix. From the left figure in Figure~\ref{fig:meta learning}, it's noticeable that the two baselines consume a similar longer time with an exponential trend, while our OAGD requires the least time, following a linear trend. However, as indicated by the middle and right figures, our OAGD demonstrates competitive training accuracy and even better testing accuracy across all the timesteps, highlighting its superiority.

\vspace{-.2cm}
\section{Conclusion}\label{sec:conc}
This paper studies online bilevel optimization and provides regret guarantees under different convexity assumptions on the time-varying objective functions. In particular, we propose a new class of online bilevel algorithms capable of leveraging smoothness and providing regret bound in terms of problem-dependent quantities, such as the path-length of the comparator sequence. 

\subsubsection*{Acknowledgements}
This work was supported in part by NSF CAREER award CCF1845076, AFOSR YIP award FA9550-19-1-0026, ARO YIP award W911NF1910027, and NIH grants  U01 AG066833, U01 AG068057, and RF1 AG063481.
\bibliographystyle{abbrvnat}
\bibliography{ref_obo.bib}
%
\onecolumn
\appendix
 \aistatstitle{Supplementary Materials for \hspace{10cm} 
 Online Bilevel Optimization: Regret Analysis of Online Alternating Gradient Methods}
\textbf{Roadmap.} The appendix is organized as follows:
\begin{itemize}
    \item Appendix \ref{sec:appendix-tech-lemma} provides some preliminaries on online optimization and a summary of notations used in the appendix.
    \item Appendix \ref{sec:app:rleated work} discusses additional related work on online single-level optimization and offline bilevel optimization.
    \item Appendix \ref{app:addend:sec:algplusreg} gives an addendum to Section \ref{sec:algplusreg}:
\begin{itemize}
    \item Appendix \ref{sec:app:reg:lower} provides the lower bound of OBO (proof of Theorem \ref{thm:lower:bound}).
    \item Appendix \ref{sec:app:reg:partial:strong} gives the proof for strongly convex OBO with partial information in both dynamic (proof of Theorem~\ref{thm:dynamic:strong:B-OGD}) and static (proof of Theorem~\ref{thm:static:strong:B-OGD}) settings.
    \item Appendix \ref{sec:app:reg:partial} gives the proof for convex OBO with partial information in both dynamic (proof of Theorem \ref{thm:dynamic:convex:B-OGD}) and static (proof of Theorem \ref{itm:thm:s:convex}) settings.
    \item Appendix \ref{sec:app:nonconv} provides the proof for non-convex OBO with partial information (proof of Theorem~\ref{thm:dynamic:nonc}).
    \end{itemize}
    \item Appendix \ref{app:sec:experiments} details the implementation and includes additional experiments:
\begin{itemize}
  \item Appendix \ref{app:sec:experiments:dynamic:regression} gives details on hyperparameters learning for dynamic regression.
    \item Appendix \ref{app:sec:detials:autobalance} gives details on online parametric loss tuning experiments as well as additional experiments.
    \item Appendix \ref{app:sec:detials:oml} provides details of online meta-learning experiments as well as additional experiments.
    \item Appendix \ref{app:sec:sensetivity} presents the numerical sensitivity of algorithms to window size and learning rate.
\end{itemize}
\vspace{8cm}
\end{itemize}
\begin{table}[htbp]
\begin{center}
    {\caption{Summary of the Notations}~\label{tab:notation}}
\scalebox{1}{
\begin{tabular}{|l l |}
		\hline
		\bfseries Notation & \bfseries Description 
		\\
		\hline
$t$ &   Time (round) index \\
$K_t$ &  The number of inner iterations at each round $t$\\
$T$ & The total number of rounds \\
$\alpha$&  Outer stepsize\\
$\beta$ &Inner stepsize\\
	 $\m{x}_t$ & Leader's  decision at round $t$ \\
		 $f_t$ & Leader's  objective at round $t$ \\
		$\m{y}_t$&  Follower's  decision at round $t$\\
		$g_t$&  Follower's  objective at round $t$\\
		 $\m{x}_t^*$ & Leader's  optimal decision in the dynamic setting at round $t$: $\m{x}_t^*\in \argmin_{\m{x}\in\mc{X}}f_t(\m{x})$ \\
	 $\m{x}^*$ & Leader's  optimal decision in static setting: $\m{x}^* \in\argmin_{\m{x} \in \mc{X}} \sum_{t=1}^T f_t(\m{x})$ \\
		$\m{y}_t^* (\m{x})$&  Follower's optimal decision at round $t$ for a given $\m{x}$\\
$\nabla h_t$, $\nabla_{\m{xy}}^2 h_t$, $\nabla_{\m{y}}^2 h_t$ & Gradient, Jacobian, and Hessian of $h_t$\\
$\tilde{\nabla} f_t(\m{x},\m{y})$ &  An approximation of the hypergradient $\nabla f_t (\m{x},\m{y})$\\
$W$, $w$&  $W = \sum_{i=0}^{w-1} u_i$ for $\{u_i\}_{i=0}^{w-1}$ with $ 1=u_{0}\geq u_1 \ldots u_{w-1} >0$ and window size $w \in [T]$\\
$\tilde{\nabla} F_{t,\m{u}}(\m{x}, \m{y})$ &   Approximate time-averaged  hypergradient:  $\tilde{\nabla} F_{t,\m{u}}(\m{x}, \m{y})=\frac{1}{W} \sum_{i=0}^{w-1} u_{i} \tilde{\nabla}  f_{t-i}(\m{x}, \m{y})$ \\
$ \nabla F_{t,\m{u}} (\m{x}_{t}, \m{y}_{t}^*(\m{x}_{t}))$  &  Exact time-averaged hypergradient: $\nabla F_{t,\m{u}} (\m{x}_{t}, \m{y}_{t}^*(\m{x}_{t}))=  \frac{1}{W} \sum_{i=0}^{w-1} u_{i}  \nabla f_{t-i}(\m{x}_{t},\m{y}^*_{t}(\m{x}_{t}))$ \\
$\norm{\cdot}$ &   The Euclidean norm 
\\
$\mb{E}\left[x \right]$ & Expectation of the random variable $x$
\\
$D$& The  (2-norm) diameter of $\mc{X}$: $D=\max_{\m{x}, \m{x}' \in \mc{X}}\|\m{x} -\m{x}'\|$ \\
$D'$& Upper bound on follower's initialization: $\|\m{y}_1 -\m{y}_1^*(\m{x}_1)\| \leq D'$
\\
 $M$ & Upper bound on the outer function:    $|f_t| \leq M$\\
		$M_f$ & Difference between $\tilde{\nabla} f_t(\m{x},\m{y}_t)$ and  $\nabla f_t (\m{x},\m{y}^*_t(\m{x}))$ w.r.t.~$\| \m{y}^*_t(\m{x})-\m{y}_t\|$
		\\
		$L_\m{y}$ & Lipschitz constant of $\m{y}^*_t(\m{x})$
		\\
		$L_f$ & Lipschitz constant of $\nabla f_t(\m{x})$\\
		 $F_{T}$ & Outer function value at the optimum:  $\sum_{t=1}^{T} f_t(\m{x}^*_{t}, \m{y}^*_t(\m{x}^*_{t}))$\\
		$P_{p,T}$ & Path-length of the outer minimizers:   $\sum_{t=2}^{T} \|\m{x}^*_{t-1} - \m{x}^*_{t}\|^p$
		\\
		$Y_{p,T}$ & Path-length of the inner minimizers: $\sum_{t=2}^{T}\norm{\m{y}_{t-1}^*(\m{x}^*_{t-1}) - \m{y}_{t}^*(\m{x}^*_t)}^p$ 
  \\
 $S_{p,T}$& The summation of the inner and outer path-lengths as $S_{p,T}=P_{p,T} + Y_{p,T}$
  \\
  $\bar{Y}_{p,T}$& The static variant of $Y_{p,T}$: $  \sum_{t=2}^{T}\norm{\m{y}_{t-1}^*(\m{x}^*) - \m{y}_{t}^*(\m{x}^*)}^p$\\
		$H_{p,T}$  & Inner minimizer function variation:  $ \sum_{t=2}^{T} \sup_{\m{x}\in \mb{R}^{d_1}} \| \m{y}^*_{t-1}(\m{x}) - \m{y}^*_{t}(\m{x})\|^p$
		\\
	    $V_T$ & Online functions variation:  $\sum_{t=2}^{T} \sup_{\m{x}\in \mc{X}} |f_{t-1}(\m{x}) -  f_t(\m{x})|$ \\
	    $G_T$ & Online gradients variation: $\sum_{t=2}^{T} \sup_{\m{x}\in \mc{X}} \norm{\nabla f_{t-1}(\m{x}) - \nabla f_t(\m{x})}^2$ 
		\\
		$\textnormal{D-Reg}_T$ & (single-level) dynamic regret: $\sum_{t=1}^T f_t(\m{x}_t)-\sum_{t=1}^T f_t(\m{x}^*_{t})$ \\
		 $\textnormal{S-Reg}_T$& (single-level) static regret: $ \sum_{t=1}^T f_t(\m{x}_t)- \min_{\m{x} \in \mc{X}}\sum_{t=1}^T f_t(\m{x})$\\
		$\textnormal{L-Reg}_T$ & (single-level) local regret: $\sum_{t=1}^T  \left\| \nabla  F_{t,\m{u}} (\m{x}_{t}) \right\|^2$\\		
		$\textnormal{BD-Reg}_T$ & Bilevel dynamic regret: $\sum_{t=1}^T f_t(\m{x}_t, \m{y}_t^*(\m{x}_t))-\sum_{t=1}^T f_t(\m{x}^*_{t},  \m{y}^*_t(\m{x}^*_{t}))$ \\
		 $\textnormal{BS-Reg}_T$&  Bilevel (outer) static regret: $ \sum_{t=1}^T f_t(\m{x}_t, \m{y}_t^*(\m{x}_t))- \min_{\m{x} \in \mc{X}}\sum_{t=1}^T f_t(\m{x},  \m{y}^*_t(\m{x}))$\\
		$\textnormal{BL-Reg}_T$ & Bilevel local regret: $\sum_{t=1}^T  \left\| \nabla  F_{t,\m{u}} (\m{x}_{t}, \m{y}_{t}^*(\m{x}_{t})) \right\|^2$\\
		\hline
	\end{tabular}
 }
\end{center}
 \end{table}

\section{Addendum to Section~\ref{sec:intro}: Preliminaries and Notations}\label{sec:appendix-tech-lemma}

We provide several technical lemmas used in the proofs. We start by assembling some well-known facts about convex and smooth functions.
\begin{enumerate}[label={\textbf{(F\arabic*})}, wide, labelindent=0pt,itemsep=0pt]
    \item  (\textbf{Smoothness}): Suppose $f(\m{x})$ is $L$-smooth for some constant $L$. Then, by definition, the following inequalities hold for any two points $\m{x},\m{y}\in\mathbb{R}^d$:
    \begin{subequations}
\begin{equation*}
    \Vert \nabla f(\m{x})- \nabla f(\m{y}) \Vert \leq L \Vert \m{x}-\m{y} \Vert, \,
\end{equation*}
\begin{equation*}
    f(\m{y})-f(\m{x}) \leq \langle \m{y}-\m{x}, \nabla f(\m{x}) \rangle +\frac{L}{2}{\Vert \m{y}-\m{x} \Vert}^2.
\label{eqn:smooth2}
\end{equation*}
Further,  if  $\m{x}^*\in\argmin_{\m{x}\in\mb{R}^{d}} f(\m{x})$, then
\begin{equation*}
{\Vert \nabla f(\m{y}) \Vert}^2 \leq 2L (f(\m{y})-f(\m{x}^*)).
\end{equation*}
\end{subequations}
\item (\textbf{Smoothness and Convexity}): Suppose $f(\m{x})$ is convex and $L$-smooth  for some constant $L$. Then, the following holds for any two points $\m{x,y} \in \mathbb{R}^{d}$:
\begin{equation*}
    \langle \nabla f(\m{y}) - \nabla f(\m{x}), \m{y}-\m{x} \rangle \geq \frac{1}{L} {\Vert \nabla f(\m{y}) - \nabla f(\m{x})\Vert}^2.
\end{equation*}
\item (\textbf{Strong-Convexity}): Suppose $f(\m{x})$ is $\mu$-strongly convex for some positive constant $\mu$. Then, by definition, the following inequality holds for any two points $\m{x,y}\in\mathbb{R}^d$:
\begin{subequations}
\begin{equation*}
    f(\m{y})-f(\m{x}) \geq \langle \m{y}-\m{x}, \nabla f(\m{x}) \rangle +\frac{\mu}{2}{\Vert \m{y}-\m{x} \Vert}^2.
\label{eqn:strconv1}
\end{equation*}
Using the above inequality, one can conclude that
\begin{equation*}
    \langle \nabla f(\m{y}) - \nabla f(\m{x}), \m{y}-\m{x} \rangle \geq \mu {\Vert \m{y}-\m{x} \Vert}^2.
\label{eqn:strconv2}
\end{equation*}
\end{subequations}
\end{enumerate}
%
The following lemma provides the self-bounding property of smooth functions.
\begin{lem}\label{lem:nonneg:smooth}
\cite[Lemma 3.1]{srebro2010smoothness}
For a non-negative and $L$--smooth function $f: \mc{X} \rightarrow \mb{R}$, we have
\[
\norm{\nabla f(\m{x})} \leq \sqrt{4 L f(\m{x})}, \ \forall \m{x} \in \mc{X}.
\]
\end{lem}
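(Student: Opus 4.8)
The plan is to deduce the bound directly from the descent inequality for smooth functions, evaluated at the exact minimizer of the resulting quadratic majorant, and then to invoke non-negativity to clear the left-hand function value. This is the standard self-bounding argument of Srebro et al.

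First I would recall that $L$-smoothness of $f$ (i.e., $\nabla f$ is $L$-Lipschitz) yields the quadratic upper bound
\[
f(\dot{\m{x}}) \leq f(\m{x}) + \langle \nabla f(\m{x}), \dot{\m{x}} - \m{x}\rangle + \frac{L}{2}\norm{\dot{\m{x}} - \m{x}}^2,
\]
valid for all $\m{x}, \dot{\m{x}}$. Next I would specialize $\dot{\m{x}}$ to the gradient-step point $\dot{\m{x}} = \m{x} - \tfrac{1}{L}\nabla f(\m{x})$, which is precisely the global minimizer of the right-hand quadratic in $\dot{\m{x}}$. Substituting and combining the inner-product and norm terms collapses the bound to
\[
f\Big(\m{x} - \tfrac{1}{L}\nabla f(\m{x})\Big) \leq f(\m{x}) - \frac{1}{2L}\norm{\nabla f(\m{x})}^2.
\]

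Finally, since $f$ is non-negative, the left-hand side is at least $0$; rearranging gives $\norm{\nabla f(\m{x})}^2 \leq 2L f(\m{x})$, so that $\norm{\nabla f(\m{x})} \leq \sqrt{2L f(\m{x})} \leq \sqrt{4L f(\m{x})}$, which is in fact slightly stronger than the claimed inequality (the looser constant $4L$ leaves room for an alternative smoothness normalization).

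The one subtlety, and the only place the argument can break, is that the test point $\m{x} - \tfrac{1}{L}\nabla f(\m{x})$ need not lie in $\mc{X}$, whereas the hypotheses supply smoothness and non-negativity only on $\mc{X}$. To keep both the descent inequality and the bound $f(\dot{\m{x}}) \geq 0$ legitimate at this point, I would adopt the convention of the cited reference, treating $f$ as an $L$-smooth non-negative function on all of $\mb{R}^{d_1}$ (equivalently, defined and non-negative on a neighborhood of $\mc{X}$ large enough to contain the gradient-step iterates). Under this reading every step above is valid as written, and no further case analysis is needed.
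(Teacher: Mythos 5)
Your argument is correct and is exactly the standard self-bounding argument the paper relies on: the paper gives no proof of its own, merely citing Lemma~3.1 of \cite{srebro2010smoothness}, and your gradient-step/descent-lemma derivation (yielding the sharper constant $\sqrt{2Lf(\m{x})}$, which implies the stated $\sqrt{4Lf(\m{x})}$) is precisely that reference's argument. You also correctly flag the only real subtlety --- that the test point $\m{x}-\tfrac{1}{L}\nabla f(\m{x})$ may leave $\mc{X}$, so smoothness and non-negativity must hold outside the domain --- which is the same caveat the paper itself records immediately after the lemma.
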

\begin{lem}\cite[Theorem 2.1.11]{nesterov2003introductory}\label{lm:nes}
Let $g :
\mathbb{R}^d \rightarrow \mathbb{R}$ be a function that is smooth, $\mu_g$-strongly convex,
and $L_g$-gradient Lipschitz continuous on an open convex set
$\mathcal{Y} \subseteq \mathbb{R}^d$. Let $\m{y}^*$ denote the global minimizer  of  $g$ over $\mathcal{Y}$.
Then, the sequence $\{\m{y}_t\}_{t=1}^T$ generated by the gradient descent
method 
\begin{align*}
 \m{y}_{t+1}=\m{y}_{t}-\beta\nabla g(\m{y}_{t})   
\end{align*}
with stepsize $\beta \in (0,2/(\mu_g+L_g)]$ satisfies
\begin{equation*}
    \|\m{y}_{t+1}-\m{y}^* \|^2\leq \left(1-\frac{2\beta \mu_g L_g}{\mu_g+L_g}\right)\|\m{y}_{t}-\m{y}^* \|^2.
\end{equation*}
If $\beta=2/(\mu_g+L_g)$, then
\begin{equation*}
    \|\m{y}_{t+1}-\m{y}^* \|^2\leq \left(\frac{\kappa_g-1}{\kappa_g+1}\right)^2\|\m{y}_{t}-\m{y}^* \|^2,
\end{equation*}
where $\kappa_g=L_g/\mu_g$.
\end{lem}
\begin{lem}\label{lem:Jens}
For any set of vectors $\{\m{x}_i\}_{i=1}^m$ with $\m{x}_i\in\mathbb{R}^d$, we have
 \begin{equation*}
    \left\| \sum\limits_{i=1}^{m} \m{x}_i\right\|^2 \leq m \sum\limits_{i=1}^{m} {\Vert \m{x}_i \Vert}^2.
 \end{equation*}
 \end{lem}
\begin{lem}\label{lem:rimann}
For all $T \in\mb{N}$,
\begin{enumerate}[label=\textnormal{\Roman*.}]
\item\label{itm:lem:a:rimann}  $  \log(T)+\frac{1}{T} \leq \sum_{t=1}^T\frac{1}{t} \leq \log(T)+1 ;$
\item\label{itm:lem:b:rimann} $ \sum_{t=1}^T\frac{1}{\sqrt{t}} \leq 2\sqrt{T} ;$

\item\label{itm:lem:c:rimann} If $1<s<\infty$, then
$
\xi(s)= \sum_{t=1}^T 1/t^s
$
is called the Riemann $\xi$-function and we have
\begin{align*}
   \xi(2n)=(-1)^{n+1}\frac{(2\pi)^{2n}B_{2n}}{2(2n)!},~~~~n=1,2, 3, \ldots,
\end{align*}
where the coefficients $B_{2n}$ are the Bernoulli numbers.
\end{enumerate}
\end{lem}
\begin{lem}\label{lem:trig}
For any $\m{x},\m{y}, \m{z} \in\mathbb{R}^d$, the following holds for any $c >0:$
\begin{align*}
    {\Vert \m{x}+\m{y} \Vert}^2 &\leq (1+c){\Vert \m{x} \Vert}^2 + \left(1+\frac{1}{c}\right){\Vert \m{y} \Vert}^2,~~\textnormal{and} \\
\norm{\m{x} - \m{y}}^2 &\geq  (1-c)\norm{\m{x}-\m{z}}^2 +\left(1-\frac{1}{c}\right)\norm{\m{z} - \m{y}}^2.  \label{eqn:triang:jens}
\end{align*}
\end{lem}
\begin{lem}\cite[Lemma 2.8]{shalev2011online}\label{lm:cc}
Let $\mathcal{X}\subseteq \mathbb{R}^d$ be a nonempty convex set. Let $f(\m{x}):\mathcal{X}\rightarrow \mathbb{R}$ be a $\mu_f$-strongly convex function over $\mathcal{X}$. Let
$\m{x}^* \in \argmin_{\m{x} \in  \mathcal{X}}\{f(\m{x}) \}$. Then, for any $\m{z}\in \mathcal{X}$, we have
\begin{align*}
f(\m{x}^*)-f(\m{z})\leq -\frac{\mu_f}{2}\|\m{z}- \m{x}^*\|^2.
\end{align*}
\end{lem}

\subsection{On the Comparability of Dynamic Metrics }\label{sec:example}
The following example shows that $P_{p,T}$, $Y_{p,T}$, and $\bar{Y}_{p,T}$ are not comparable in general and all three measures play a key role in OBO.
\begin{exm}\label{exm:not:comparable}
Let $x\in \mathcal{X} = \left[-1,1\right] \subset\mathbb{R}$, $y\in \mathbb{R}$, and consider a sequence of quadratic cost functions 
\begin{align*}
 f_{t}(x,y) &=  \frac{1}{2} \left(x+2{a}_t^{(1)}\right)^2  + \frac{1}{2} \left({y}-a_t^{(2)}\right)^2+a_t^{(3)}, \\  
 g_{t}({x},{y})&=\frac{1}{2} {y}^2- \left({x}-a_t^{(2)}\right) {y}+a_t^{(4)},
\end{align*}
for all $t \in [T]$, where $\{a_t^{(i)}\}_{i=1}^4$ are some time-varying constants.

It follows from \eqref{eqn:obl} that
\begin{align*}
{y}^*_t({x}_{t})={x}_t-a_t^{(2)},  \qquad 
{x}_{t}^*=-a_t^{(1)}+a_t^{(2)}, \qquad 
 {y}^*_t({x}_{t}^*)=a_t^{(1)}.  
\end{align*}
Let $a_t^{(2)}=(-1)^t/\sqrt{t}$ for all $t \in [T]$.
\begin{itemize}
    \item If $a_t^{(1)}=a_t^{(2)}$, then  $P_{1,T} =P_{2,T}=0$, $Y_{1,T} =\bar{Y}_{1,T} =\mc{O}(\sqrt{T})$, and $Y_{2,T} =\bar{Y}_{2,T} =\mc{O}(\log{T})$.  
    \item If $a_t^{(1)}=0$, then $P_{1,T} = \mc{O}(\sqrt{T})$,  $P_{2,T} = \mc{O}(\log{T})$, and $Y_{1,T}=Y_{2,T} =0$.
\end{itemize}
This shows that $S_{1,T}= P_{1,T}+Y_{2,T}$, $S_{2,T}= P_{2,T}+Y_{2,T}$ are not comparable in general. Similarly, static metrics $\bar{Y}_{1,T}$ and $\bar{Y}_{2,T}$  are not comparable.
\end{exm}
\section{Addendum to Section~\ref{sec:related-work}: Additional Related Work}\label{sec:app:rleated work}
Online learning and stochastic optimization are closely related. The key difference between them is that  at each round $t$ of the online optimization, the loss function can be arbitrarily chosen by the adversary. 
Given the vastness of the online and stochastic optimization literature, we do not strive to provide an exhaustive review. Instead, we mainly focus on a few representative works on online static and \textit{worst-case} dynamic regret minimization, as well as bilevel optimization. Refer to \citep{Hazan16a,hoi2021online} and \citep{liu2021investigating,sinha2017review} for surveys on online and bilevel optimization, respectively.

\paragraph{Static Regret Minimization:} In single-level online optimization, the goal of the player (learner) is to choose a sequence $\{\m{x}_t\}_{t=1}^T$ such that their regret is minimized. There are different notions of regret in the literature, including static, dynamic (defined in \eqref{eqn:worst:dynamic:regret}), and adaptive~\citep{hazan2016introduction,shalev2007online,shalev2011online}. In the case of static regret, $\m{x}^\ast_t$ is replaced by $\m{x}^\ast \in \argmin_{\m{x} \in \mc{X}} \sum_{t=1}^T f_t(\m{x})$. This type of regret is well-studied in the literature of online learning~\citep{hazan2016introduction,shalev2007online,shalev2011online}.  \citet{zinkevich2003online} shows that online gradient descent (OGD) provides an $\mc{O}(\sqrt{T})$ regret bound for convex (possibly nonsmooth) functions. \citet{hazan2007logarithmic} improve this bound to $\mc{O}(\log T)$ for strongly-convex functions. These results were also shown to be minimax optimal~\citep{abernethy2008optimal}.
\citet{zhou2020regret} provide regret bounds for online learning algorithms under relative Lipschitz and/or relative strongly-convexity assumptions.

In addition to exploiting the convexity of online functions, recent studies have focused on improving static regret by incorporating smoothness~\citep{chiang2012online,srebro2010smoothness}. These problem-dependent bounds can safeguard the worst-case minimax rate, yet they can be much better in easy cases of online learning  problems (e.g., loss functions with a small deviation). For instance, \citep{srebro2010smoothness} shows that for convex smooth non-negative functions,  OGD can achieve an $\mc{O}(1+\sqrt{F_T})$ small-loss regret bound, where $F_T= \sum_{t=1}^{T} f_t(\m{x}^*)$ and $\m{x}^\ast\in \argmin_{\m{x} \in \mc{X}} \sum_{t=1}^T f_t(\m{x})$. For convex smooth functions, \citep{chiang2012online} establishes an $\mc{O}(1+\sqrt{G_T})$ bound, where $G_T = \sum_{t=2}^{T} \sup_{\m{x}\in \mc{X}} \norm{\nabla f_{t-1}(\m{x}) - \nabla f_t(\m{x})}^2$ is the gradient variation. These bounds are particularly favored in slowly changing environments in which the online functions evolve gradually~\citep{zhao2020dynamic}.

\paragraph{Dynamic Regret Minimization:} \textit{Single-level} dynamic regret forces the player to compete with time-varying comparators, and thus is particularly favored in non-stationary environments~\citep{sugiyama2012machine}.  The notion of dynamic regret is also referred to as tracking regret or shifting regret in the prediction with expert advice setting~\citep{bousquet2002tracking,herbster1998tracking,herbster2001tracking,wei2016tracking,zheng2019equipping}.  There are two kinds of dynamic regret in previous studies:  The \textit{universal dynamic regret} aims to compare with any feasible comparator sequence \citep{zhang2018adaptive,zhao2020dynamic,zinkevich2003online}, while the \textit{worst-case dynamic regret} (defined in \eqref{eqn:worst:dynamic:regret}) specifies the comparator sequence to be the sequence of minimizers of online functions \citep{aydore2019dynamic,besbes2015non,jadbabaie2015online,mokhtari2016online,yang2016tracking,zhang2017improved, nazari2021dynamic}. We present related works for the latter case as it is the setting studied in this paper.

It is known that in the worst case, sublinear dynamic regret is not attainable unless one imposes regularity of some form on the comparator sequence or the function sequence~\citep{besbes2015non,hall2015online,jadbabaie2015online}.  \citet{yang2016tracking} shows that OGD enjoys an $\mc{O}(1+\sqrt{T P_{1,T}})$ worst-case dynamic regret bound for convex functions when the path-length $P_{1,T}$ is known. For strongly convex and smooth functions, \citep{mokhtari2016online} shows that an $\mc{O}(1+P_{1,T})$ dynamic regret bound is achievable. \citet{chang2021online} proves that OGD can achieve an $\mc{O}(1+P_{2,T})$ regret bound without the bounded gradient assumption. \citet{zhang2017improved} further proposes the online multiple gradient descent algorithm and proves that the algorithm enjoys an $\mc{O}(1+\min\{P_{1,T},P_{2,T}\})$ regret bound; this bound has been recently enhanced to $\mc{O}(1+ \min\{P_{1,T},P_{2,T}, V_{T}\})$ by an improved analysis~\citep{zhao2021improved}, where $V_{T} =  \sum_{t=2}^{T} \sup_{\m{x}\in \mc{X}} |f_{t-1}(\m{x}) -  f_t(\m{x})|$. \citet{yang2016tracking} further shows that the $\mc{O}(1+P_{2,T})$ rate is attainable for convex and smooth functions, provided that all the minimizers $\m{x}^*_t$ lie in the interior of the domain $\mc{X}$. The above results use path-length (or squared path-length) as the regularity, which is in terms of the trajectory of the comparator sequence. \citet{nazari2019adaptive,nazari2019dadam} extend the above results to the distributed settings and provide dynamic regret bounds in terms of the $\ell_1$ path-length. \citet{besbes2015non} shows that OGD with a restarting strategy attains an $\mc{O}(1+T^{2/3}{V_{T}}^{ 1/3})$ regret for convex functions when $V_{T}$ is available, which has been recently improved to $\mc{O}(1+T^{1/3}{V_{T}}^{ 2/3})$ for the square loss~\citep{baby2019online}.

\paragraph{Adaptive Regret:} Adaptive regret \citep{daniely2015strongly,hazan2007adaptive, zhang2019adaptive,zhang2020minimizing,zhang2018dynamic} is also used to capture the dynamics in the environment. Specifically,  it characterizes a local version of static regret, where
 $$ \mbox{Regret}_T([r,s]) \triangleq \sum_{t=r}^s f_t(\m{x}_t)-\min_{\m{x}\in\mc{X}}\sum_{t=r}^s f_t(\m{x}),$$
for each interval $[r,s]\subseteq [T]$. \citet{zhang2018dynamic} provide a connection between strongly adaptive regret and dynamic regret and proposes an adaptive algorithm that can bound the dynamic regret without prior knowledge of the functional variation. \citet{zhang2020minimizing} develop a new algorithm that can minimize the dynamic regret and the adaptive regret simultaneously.

\paragraph{Local Regret Minimization:}
Non-convex online optimization is a more challenging setting than the convex case. Some notable works in the non-convex literature include adversarial multi-armed bandit with a continuum of arms \citep{bubeck2008online,heliou2020online,heliou2021zeroth,krichene2015hedge} and classical Follow-the-Perturbed-Leader algorithm with access to an offline non-convex optimization oracle \citep{agarwal2019learning,kleinberg2008multi,suggala2020online}.
\citet{hazan2017efficient} introduces a local regret measure based on gradients of the loss  to address intractable non-convex online models. Their regret is local in the sense that it averages a sliding window of gradients and quantifies the objective of predicting points with small gradients on average. They are motivated by a game-theoretic perspective, where an adversary reveals observations from an unknown static loss. The gradients of the loss functions from the $w$ most recent rounds of play are evaluated at the current model parameters $\m{x}_t$, and these gradients are then averaged. The motivation behind averaging is two-fold: (i) a randomly selected update has a small time-averaged gradient in expectation if an algorithm incurs local regret sublinear in $T$, and (ii) for any online algorithm, an adversarial sequence of loss functions can force the local regret incurred to scale with $T$ as $\mc{O}(T/w^2)$. \citet{hallak2021regret} extends the local regret minimization to online, non-smooth, non-convex problems. These arguments, presented in \citep{aydore2019dynamic,hallak2021regret,hazan2017efficient,nazari2019dadam}, inspire our use of local regret for OBO.


\paragraph{(Offline) Bilevel Optimization:} Since its first formulation by Stackelberg \citep{stackelberg1952theory} and the first mathematical model by Bracken and McGill \citep{bracken1973mathematical}, there has been significant growth in the applications and developments of bilevel programming. Existing works either reduce the problem to a single-level optimization problem \citep{aiyoshi1984solution,al1992global,edmunds1991algorithms,hansen1992new,lv2007penalty,moore2010bilevel,shi2005extended,sinha2017review}, or apply (alternating) optimization methods to solve the original problem.  The single-level formulations, which employ the Karush-Kuhn-Tucker (KKT) conditions or penalty approaches, are generally difficult to solve~\citep{sinha2017review}.

Gradient-based approaches are more attractive for bilevel programming due to their simplicity and effectiveness. This type of approach estimates the hypergradients for iterative updates, and can generally be divided into two categories: approximate implicit differentiation (AID) and iterative differentiation (ITD) classes. ITD-based approaches~\citep{finn2017model,franceschi2017forward,grazzi2020iteration,maclaurin2015gradient}  estimate the hypergradient either in reverse (automatic differentiation) or forward manner. AID-based approaches~\citep{domke2012generic,ghadimi2018approximation,grazzi2020iteration,ji2021bilevel,pedregosa2016hyperparameter,nazari2022penalty} estimate the hypergradient via implicit differentiation.  \citet{franceschi2018bilevel} characterized the asymptotic convergence of a backpropagation-based approach as one of ITD-based algorithms by assuming the inner-level problem is strongly convex.   \citet{shaban2019truncated} provided a similar analysis for a truncated backpropagation scheme. \cite{li2020improved,liu2020generic} analyzed the asymptotic performance of ITD-based approaches when  the inner-level problem is convex.

Finite-time complexity analysis for bilevel optimization has also been explored. \cite{ghadimi2018approximation} provided a finite-time convergence analysis for an AID-based algorithm under various loss geometries: the outer function being strongly convex, convex, or non-convex, while the inner function remains strongly convex. \citet{ji2021bilevel} provided an improved finite-time analysis for both AID- and ITD-based algorithms under the nonconvex-strongly-convex geometry. \citet{Ji2021LowerBA} provided the lower bounds on complexity as well as upper bounds under these two geometries.  When the objective functions can be expressed in an expected or finite-time form, \citep{ghadimi2018approximation,hong2020two,ji2021bilevel} developed stochastic bilevel algorithms and provided the finite-time analysis. There have been subsequent studies on accelerating SGD-type bilevel optimization via momentum and variance reduction techniques~\citep{chen2021single,guo2021stochastic,huang2021biadam,Ji2020ProvablyFA} as well. However, a fundamental assumption in all the aforementioned works is that the cost function does \textit{not} change throughout the horizon over which we seek to optimize it. 

\section{ Addendum to  Section~\ref{sec:algplusreg}: Proof of Main Theorems}\label{app:addend:sec:algplusreg}

\subsection{Proof of Theorem~\ref{thm:lower:bound}}\label{sec:app:reg:lower}
\begin{proof}
We randomly generate a sequence of functions $\{(f_t,g_t)\}_{t=1}^T$ and show that there exists a distribution of online functions such that for any bilevel algorithm $\mc{A}$, we have $\mb{E}\left[\textnormal{BD-Reg}_T\right] \geq \mb{E}[S_{2,T} ]$. Specifically, for any bilevel algorithm $\mc{A}$ that generates a sequence of $( {x}_t,{y}_t^*)\in \mb{R} \times \mb{R}$ for all $t \in[T]$, we consider
the expected regret as follows:
\[
\mb{E}\left[\textnormal{BD-Reg}_T\right] = \mb{E}\left[\sum_{t=1}^T f_t({x}_t, {y}_t^*({x}_t))-\sum_{t=1}^T f_t({x}^*_{t},  {y}^*_t({x}^*_{t}))\right].
\]
For each round $t$, we randomly sample ${a}_t^{(1)},{a}_t^{(2)}\in \mathbb{R}$ from the Gaussian distribution $\mc{N}(0, 1)$. For all $t\in[T] $, let
\begin{align*}
  f_{t}({x},{y}_{t}^*({x})) &= 6 \left(            {y}_{t}^*({x})-\left({a}_t^{(1)}+{a}_t^{(2)}\right)\right)^2+ 6 \left({x}-{a}_t^{(1)}\right)^2, \\
\textnormal{s.t.}~~~ {y}_{t}^*({x})&\in \argmin_{{y} \in \mb{R}}g_{t}({x},{y})=\frac{1}{2} {y}^2- \left({x}+ {a}_t^{(2)}\right) {y}.
\end{align*}
It follows from \eqref{eqn:obl} that
\begin{equation*}
{y}^*_t({x}_{t})={x}_t+{a}_t^{(2)},  \quad {x}_{t}^*={a}_t^{(1)}, \quad  \textnormal{and} \quad {y}^*_t({x}_{t}^*)={a}_t^{(1)}+{a}_t^{(2)}.
\end{equation*}
Notice that ${x}_t$ is independent from ${a}_t^{(1)}$.
Hence,
\begin{equation}\label{eqn:reg:lower}
\begin{split}
\mb{E}\left[\textnormal{BD-Reg}_T\right]  
&=  6 \sum_{t=1}^T  \mb{E}\left[\left( {x}_t -{a}_t^{(1)}\right)^2 \right]\\
& =  6 \sum_{t=1}^T \left(  \mb{E}\left[\left( {a}_t^{(1)} \right)^2 \right] +\mb{E}\left[ {x}_t^2\right] \right) \\
&\geq 6T.
\end{split}
\end{equation}

For $T\geq 2$, we obtain
\begin{equation}\label{eqn:py:upper}
\begin{split}
\mb{E}\left[S_{2,T}\right] =\mb{E}[P_{2,T}]+\mb{E}[Y_{2,T}]&=\sum_{t=2}^{T}\mb{E}\left[\left({x}^*_{t-1} - {x}^*_{t}\right)^2\right]+\sum_{t=2}^{T}\mb{E}\left[\left({y}_{t-1}^*({x}^*_{t-1}) - {y}_{t}^*({x}^*_t)\right)^2\right]
 \\&= \sum_{t=2}^T \mb{E} \left[ \left({a}_t^{(1)}- {a}_{t-1}^{(1)}\right)^2 \right] + \sum_{t=2}^T \mb{E} \left[ \left({a}_t^{(1)}+ {a}_t^{(2)}- ({a}_{t-1}^{(1)}+{a}_{t-1}^{(2)})\right)^2 \right]\\
&=\sum_{t=2}^T 2 \left(\mb{E} \left[ \left({a}_t^{(1)}\right)^2\right]+ \mb{E} \left[ \left({a}_{t-1}^{(1)}\right)^2\right]\right)+ \mb{E} \left[ \left({a}_{t}^{(2)}\right)^2 \right]+\mb{E} \left[ \left({a}_{t-1}^{(2)}\right)^2\right]\\
& \leq 6 (T-1).
\end{split}
\end{equation}
Here, the third equality follows from the independence of ${a}^{(1)}_t$ and ${a}^{(2)}_t$ for all $t \in [T]$.

Now, it follows from \eqref{eqn:reg:lower} and  \eqref{eqn:py:upper} that $\mb{E}\left[\textnormal{BD-Reg}_T\right] \geq \mb{E}[S_{2,T} ]$. This completes the proof of Theorem ~\ref{thm:lower:bound}.
\end{proof}

\subsection{Proof for Strongly Convex OBO with Partial Information}\label{sec:app:reg:partial:strong}

In this section, we provide the dynamic regret bound for strongly convex OBO with partial information. Specifically, we derive a problem-dependent regret bound for Algorithm~\ref{alg:obgd}.  

\subsubsection{Auxiliary Lemmas}\label{C21}
\begin{lem}[Restatement of Lemma~\ref{lem:lips}]
Under Assumption~\ref{assu:f}, for all $t \in [T]$, $\m{x}, \m{x}' \in \mc{X}$,  and $\m{y} \in \mb{R}^{d_2}$, we have
\begin{align}
\label{eqn:lip:cons1}&\left\|\m{y}^*_t(\m{x})-\m{y}^*_t(\m{x}')\right\|\leq  L_{\m{y}} \left\|\m{x}-\m{x}'\right\|,\\\label{eqn:lip:cons2}
&\|\tilde{\nabla} f_t(\m{x},\m{y}) - \nabla f_t(\m{x},\m{y}^*_t(\m{x}))\| \leq  M_f \left\|\m{y}-\m{y}^*_t(\m{x})\right\|,\\\label{eqn:lip:cons3}
&\left\|\nabla f_t(\m{x},\m{y}_t^*(\m{x}))- \nabla f_t(\m{x}',\m{y}_t^*(\m{x}'))\right\|\leq  L_f \left\|\m{x}-\m{x}'\right\|.
\end{align}
Here, $L_{\m{y}}$, $M_f$, and $L_f$ are defined in  \eqref{eqn:def:ly}, \eqref{eqn:def:Mf}, and \eqref{eqn:def:Lf}, respectively.

\end{lem}
\begin{proof}
The proof is an adaptation of the proof from \cite[Lemma 2.2]{ghadimi2018approximation} to the online setting. 

We first show \eqref{eqn:lip:cons1}. 
Since $\m{y}^*_t(\m{x}) \in \argmin_{\m{y} \in  \mb{R}^{d_2}} g_t(\m{x}, \m{y})$, we have 
\begin{align*}
  \nabla_{\m{y}} g_t \left(\m{x},\m{y}^*_t (\m{x}) \right) =0, \quad \textnormal{and}\quad \nabla^2_{\m{x}\m{y}} g_t \left(\m{x},\m{y}^*_t (\m{x}) \right)=0.
\end{align*}
This together with the chain rule implies that 
\begin{equation*}
 \nabla \m{y}^*_t(\m{x}) \nabla^2_{\m{y}}g_t\left(\m{x},\m{y}^*_t(\m{x}) \right) +  \nabla^2_{\m{x}\m{y}} g_t \left(\m{x},\m{y}^*_t (\m{x}) \right)=0.
\end{equation*}
It follows from Assumption~\ref{assu:f:a3} that $\nabla^2_{\m{y}} g_t \left(\m{x},\m{y}^*_t (\m{x}) \right)$ is positive definite. Hence,
\begin{align*}
 \nabla \m{y}^*_t(\m{x})=- \nabla^2_{\m{x}\m{y}} g_t \left(\m{x},\m{y}^*_t (\m{x}) \right)\left( \nabla^2_{\m{y}} g_t \left(\m{x},\m{y}^*_t (\m{x}) \right)\right) ^{-1}  .
\end{align*}
Now, from Assumption \ref{assu:f:a2}, we get
\begin{equation}\label{eqn:def:ly}
    \begin{split}
 \norm{ \nabla \m{y}^*_t(\m{x})}&=\norm{\nabla^2_{\m{x}\m{y}} g_t \left(\m{x},\m{y}^*_t (\m{x}) \right)\left( \nabla^2_{\m{y}} g_t \left(\m{x},\m{y}^*_t (\m{x}) \right)\right) ^{-1}  }
\\&\leq \norm{\nabla^2_{\m{x}\m{y}} g_t \left(\m{x},\m{y}^*_t (\m{x}) \right)}\norm{\left( \nabla^2_{\m{y}} g_t \left(\m{x},\m{y}^*_t (\m{x}) \right)\right) ^{-1} }
\\&\leq \frac{\ell_{g,1}}{\mu_g}=:L_{\m{y}}.      
    \end{split}
\end{equation}
Next, we show \eqref{eqn:lip:cons2}. Let $\m{M}_t(\m{x},\m{y}):=\nabla^2_{\m{x}\m{y}} g_t \left(\m{x},\m{y} \right)\left( \nabla^2_{\m{y}} g_t \left(\m{x},\m{y}\right)\right) ^{-1} $.  Define
\begin{align*}
  &\Delta_t:= \nabla  f_t(\m{x}, \m{y})- \nabla  f_t(\m{x}, \m{y}_t^*(\m{x})),
  \\&\Delta_t^1:=\nabla_\m{x} f_t \left(\m{x}, \m{y}\right)-\nabla_\m{x} f_t \left(\m{x}, \m{y}^*_t (\m{x}) \right),
\\&\Delta_t^2:=\m{M}_t(\m{x},\m{y})\nabla_\m{y} f_t\left(\m{x},\m{y} \right)-\m{M}_t(\m{x},\m{y}^*_t(\m{x}))\nabla_\m{y} f_t\left(\m{x},\m{y}^*_t(\m{x}) \right),
\\&\Delta_t^3:=\m{M}_t(\m{x},\m{y})\{\nabla_\m{y}  f_t\left(\m{x},\m{y} \right)-\nabla f_t\left(\m{x},\m{y}^*_t(\m{x}) \right)\},
  \\&\Delta_t^4:=\{\m{M}_t(\m{x},\m{y})-\m{M}_t\left(\m{x},\m{y}^*_t(\m{x}) \right)\}\nabla_\m{y} f_t\left(\m{x},\m{y}^*_t(\m{x}) \right),
  \\&\Delta_t^5:=\left\{\nabla^2_{\m{x}\m{y}} g_t \left(\m{x},\m{y} \right)-\nabla^2_{\m{x}\m{y}} g_t \left(\m{x},\m{y}^*_t (\m{x}) \right) \right\}\left( \nabla^2_{\m{y}} g_t \left(\m{x},\m{y}\right)\right) ^{-1},
  \\&\Delta_t^6:=\nabla^2_{\m{x}\m{y}} g_t \left(\m{x},\m{y}^*_t (\m{x}) \right) \left\{\left( \nabla^2_{\m{y}} g_t \left(\m{x},\m{y} \right)\right) ^{-1}  -\left( \nabla^2_{\m{y}} g_t \left(\m{x},\m{y}^*_t (\m{x}) \right)\right) ^{-1}  \right \},
\end{align*}
which implies that 
  \begin{align}\label{jh}
  \Delta_t&= \Delta_t^1-\Delta_t^2  =\Delta_t^1-\Delta_t^3-\Delta_t^4= \Delta_t^1-\Delta_t^3-(\Delta_t^5+\Delta_t^6)\nabla_{\m{y}}  f_t(\m{x}, \m{y}_t^*(\m{x})).
  \end{align}
From Assumption \ref{assu:f}, we have
\begin{align}\label{44}
\|\Delta_t^1 \|\leq \ell_{f,1}\left\|\m{y}-\m{y}^*_t(\m{x})\right\|,  \quad\|\Delta_t^3 \|\leq \frac{\ell_{f,1}\ell_{g,1}}{\mu_g}\left\|\m{y}-\m{y}^*_t(\m{x})\right\|, \quad
\|\Delta_t^5 \|\leq \frac{\ell_{g,2}}{\mu_g}\left\|\m{y}-\m{y}^*_t(\m{x})\right\|. 
\end{align}
Note that, for any invertible matrices $\m{A}_1$ and $\m{A}_2$, we have
  \begin{align*}
    \|\m{A}_2^{-1}-\m{A}_1^{-1} \| & =\|\m{A}_1^{-1}(\m{A}_1-\m{A}_2)\m{A}_2^{-1} \|
\leq \|\m{A}_1^{-1} \| \|\m{A}_2^{-1} \| \|\m{A}_1-\m{A}_2 \|,
  \end{align*}
which implies that
\begin{align}\label{33}
\nonumber\|\Delta_t^6 \|&\leq \norm{\nabla^2_{\m{x}\m{y}} g_t \left(\m{x},\m{y}^*_t (\m{x}) \right)}\norm{ \left( \nabla^2_{\m{y}} g_t \left(\m{x},\m{y} \right)\right) ^{-1}  -\left( \nabla^2_{\m{y}} g_t \left(\m{x},\m{y}^*_t (\m{x}) \right) \right)^{-1}  }
\\\nonumber&\leq \ell_{g,1} \norm{ \left( \nabla^2_{\m{y}} g_t \left(\m{x},\m{y} \right)\right) ^{-1} } \norm{\left( \nabla^2_{\m{y}} g_t \left(\m{x},\m{y}^*_t (\m{x}) \right) \right)^{-1} } \norm{\nabla^2_{\m{y}} g_t \left(\m{x},\m{y} \right)-\nabla^2_{\m{y}} g_t \left(\m{x},\m{y}^*_t (\m{x}) \right)}
\\&\leq \frac{\ell_{g,1}\ell_{g,2}}{\mu_g^2}\left\|\m{y}-\m{y}_t^*(\m{x})\right\|.
\end{align}
Therefore, by substitution \eqref{44} and \eqref{33} into \eqref{jh}, we have
\begin{align}\label{ddc}
\nonumber  \| \nabla  f_t(\m{x}, \m{y})- \nabla  f_t(\m{x}, \m{y}_t^*(\m{x}))\|&\leq  \| \Delta_t^1\|+\|\Delta_t^3\|+\|\Delta_t^5+\Delta_t^6\| \|\nabla_{\m{y}}  f_t(\m{x}, \m{y}_t^*(\m{x}))\| 
  \\\nonumber&\leq \| \Delta_t^1\|+\|\Delta_t^3\|+\|\Delta_t^5+\Delta_t^6\| \ell_{f,0}
  \\&\leq M_f \left\|\m{y}-\m{y}^*_t(\m{x})\right\|,
\end{align}
where 
\begin{align}\label{eqn:def:Mf}
 M_f :=\ell_{f,1} + \frac{\ell_{g,1}\ell_{f,1}}{\mu_g} + \frac{\ell_{f,0}}{\mu_g}\Big(\ell_{g,2}+\frac{{\ell_{g,1} \ell_{g,2}}}{\mu_g}\Big).   
\end{align}

Next, we show \eqref{eqn:lip:cons3}. Note that
\begin{equation}
\begin{aligned}
 \label{8}   \left\|\nabla f_t(\m{x},\m{y}^*_t(\m{x}))- \nabla f_t(\m{x}',\m{y}^*_t(\m{x}'))\right\|&\leq \left\|\nabla f_t(\m{x},\m{y}^*_t(\m{x}))- \tilde{\nabla} f_t(\m{x},\m{y}^*_t(\m{x}'))\right\|
    \\&+\left\|\tilde{\nabla} f_t(\m{x},\m{y}_t^*(\m{x}'))- \nabla f_t(\m{x}',\m{y}_t^*(\m{x}'))\right\|.
\end{aligned}
\end{equation}
We then study each terms separately. From \eqref{ddc} and \eqref{eqn:lip:cons1}, we get
\begin{align*}
\left\|\nabla f_t(\m{x},\m{y}_t^*(\m{x}))- \tilde{\nabla} f_t(\m{x},\m{y}_t^*(\m{x}'))\right\|   \leq M_f\|\m{y}_t^*(\m{x})- \m{y}_t^*(\m{x}')\| \leq M_f L_{\m{y}} \|\m{x}-\m{x}' \|.
\end{align*}
Moreover, by similar argument to \eqref{ddc}, we obtain 
\begin{align*}
 \left\|\nabla f_t(\m{x},\m{y}^*_t(\m{x}'))- \nabla f_t(\m{x}',\m{y}^*_t(\m{x}'))\right\|\leq  \left(\ell_{f,1} + \frac{\ell_{g,1}\ell_{f,1}}{\mu_g} + \frac{\ell_{f,0}}{\mu_g}\left(\ell_{g,2}+\frac{{\ell_{g,1} \ell_{g,2}}}{\mu_g}\right) \right)\|\m{x}-\m{x}' \|.  
\end{align*}
By substituting the above two inequalities into \eqref{8}, we have
\begin{align*}
  \left\|\nabla f_t(\m{x},\m{y}^*_t(\m{x}))- \nabla f_t(\m{x}',\m{y}^*_t(\m{x}'))\right\| \leq  L_f\|\m{x}-\m{x}' \|,
\end{align*}
where 
\begin{align}\label{eqn:def:Lf}
 L_f:=\ell_{f,1} + \frac{\ell_{g,1}(\ell_{f,1}+M_f)}{\mu_g} + \frac{\ell_{f,0}}{\mu_g}\left(\ell_{g,2}+\frac{{\ell_{g,1} \ell_{g,2}}}{\mu_g}\right).    
\end{align}
\end{proof}

The following lemma characterizes the inner estimation errors
$\sum_{t=1}^T\|\m{y}_{t+1} -  \m{y}^*_t(\m{x}_t) \|^2$  and $\sum_{t=1}^T\|\m{y}_{t+1} -  \m{y}^*_t(\m{x}_t) \|$, where $ \m{y}_{t+1} $ is the inner variable updated via Algorithm~\ref{alg:obgd}. It shows that by applying inner OGD multiple times at each round $t$, we are able to extract more information from each inner function and, therefore, are more likely to obtain a tight bound for the inner error in terms of the path-length $Y_{p,T}$. 
\begin{lem}\label{eq:lower:dynamic}
Suppose Assumption~\ref{assu:f} holds. In Algorithm~\ref{alg:obgd}, choose
\begin{equation*}
\beta_t=\beta =\frac{2 }{\ell_{g,1}+\mu_g},~~\textnormal{and}~~K_t>\left\lceil\frac{(\kappa_g+1)\log \rho^{-2}_t}{4}\right\rceil 
\end{equation*}
for some positive deceasing sequence $\{\rho_t\}_{t=1}^T$.
Then, Algorithm~\ref{alg:obgd} guarantees the following.
\begin{enumerate}[label={\textnormal{{L}\arabic*.}}]
\item \label{item:leml1} If $\rho_1 < \sqrt{1/2}$, we have
\begin{align*}
\nonumber
\sum_{t=1}^T \|  \m{y}_{t+1} -  \m{y}^*_t(\m{x}_t) \|^2  &\leq   \frac{\rho^2_1}{1-2\rho^2_1} \|\m{y}_{1} - \m{y}_{1}^*(\m{x}_1)\|^2 \\
&+ \frac{6}{1-2\rho^2_1}  \left(2 L_{\m{y}}^2\sum_{t=1}^{T}\rho_{t}^2\|\m{x}_{t}-\m{x}^*_{t}\|^2 + \sum_{t=2}^T\rho_t^2 \|\m{y}^*_{t-1}(\m{x}^*_{t-1})-\m{y}^*_{t}(\m{x}^*_{t})\|^2 \right).
\end{align*}
\item \label{item:leml2} If $\rho_1 < 1$, we get
\begin{align*}
\sum_{t=1}^T \|  \m{y}_{t+1} -  \m{y}^*_t(\m{x}_t) \| &\leq   \frac{\rho_1}{1-\rho_1} \|\m{y}_{1} - \m{y}_{1}^*(\m{x}_1)\| \\
&+  \frac{1}{1-\rho_1}  \left(2 L_{\m{y}}\sum_{t=1}^{T}\rho_{t}\|\m{x}_{t}-\m{x}^*_{t}\|+  \sum_{t=2}^T\rho_t \|\m{y}^*_{t-1}(\m{x}^*_{t-1})-\m{y}^*_{t}(\m{x}^*_{t})\| \right).
\end{align*}
\end{enumerate}
\end{lem}
\begin{proof}
We show \ref{item:leml1}. The proof of \ref{item:leml2} follows similarly. 
Since $\beta =2/(\ell_{g,1}+\mu_g)$, from Lemma \ref{lm:nes},  we have
\begin{align*}
    \|\m{z}^{K_t+1}_t  - \m{y}^*_{t}(\m{x}_t)\|^2 \leq  \left( 1 -  \frac{2}{\kappa_g+1}  \right)^{2} \|\m{z}^{K_t}_t  - \m{y}^*_{t}(\m{x}_t)\|^2,
\end{align*}
which implies that
\begin{align}\label{eqn:lem:str:30}
\|\m{z}^{K_t+1}_t  - \m{y}^*_{t}(\m{x}_t)\|^2 \leq    \left( 1 -  \frac{2}{\kappa_g+1}  \right)^{2K_t}\|\m{z}^1_t- \m{y}^*_{t}(\m{x}_t)\|^2.
\end{align}
By our assumption $K_t>\lceil  0.25 (\kappa_g+1)\log \rho^{-2}_t\rceil$ which implies that
\begin{align}\label{eqn:lem:str:31}
\left( 1 -  \frac{2}{\kappa_g+1}  \right)^{2K_t} \leq  \exp \left(-   \frac{4K_t}{\kappa_g+1} \right) \leq \rho^2_t.
\end{align}
Then, using \eqref{eqn:lem:str:30} and \eqref{eqn:lem:str:31}, we have
\begin{align*}
\|\m{z}_t^{K_t+1}  - \m{y}^*_{t}(\m{x}_t)\|^2 =\|\m{y}_{t+1} - \m{y}^*_{t}(\m{x}_t)\|^2  \leq  \rho^2_t \|\m{y}_{t} - \m{y}^*_{t}(\m{x}_t)\|^2.
\end{align*}
Hence,
\begin{equation} \label{eqn:lem:lin3}
\sum_{t=1}^T \|  \m{y}_{t+1} -  \m{y}^*_t(\m{x}_t) \|^2 \leq \rho^2_1 \|\m{y}_{1} - \m{y}_{1}^*(\m{x}_1)\|^2 +  \sum_{t=2}^T  \rho^2_t \|\m{y}_{t} - \m{y}^*_{t}(\m{x}_t)\|^2,
\end{equation}
which implies that
\begin{equation}\label{eqn:lem:lin4}
\begin{split}
\sum_{t=2}^T  \rho^2_t \|\m{y}_{t} - \m{y}^*_{t}(\m{x}_t)\|^2  &\leq  2 \sum_{t=2}^{T} \rho^2_t\left( \|\m{y}_{t} - \m{y}^*_{t-1}(\m{x}_{t-1})\|^2 + \|\m{y}^*_{t-1}(\m{x}_{t-1}) - \m{y}^*_{t}(\m{x}_{t})\|^2  \right)\\
& \leq 2 \sum_{t=1}^{T}  \rho^2_t \|\m{y}_{t+1} - \m{y}^*_{t}(\m{x}_{t})\|^2 +   2\sum_{t=2}^{T}\rho^2_t \|\m{y}^*_{t-1}(\m{x}_{t-1}) - \m{y}^*_{t}(\m{x}_{t})\|^2.
\end{split}
\end{equation}
It follows from Lemma~\ref{lem:Jens} that
\begin{eqnarray}\label{eqn:lem:lin5}
\nonumber
\rho^2_t \|\m{y}^*_{t-1}(\m{x}_{t-1}) - \m{y}^*_{t}(\m{x}_{t})\|^2
 &\leq &  3 \rho_t^2 \|\m{y}^*_{t}(\m{x}_{t})-\m{y}^*_{t}(\m{x}^*_{t})\|^2 \\
 \nonumber
 &+&  3 \rho_t^2  \|\m{y}^*_{t-1}(\m{x}_{t-1}) - \m{y}^*_{t-1}(\m{x}^*_{t-1})\|^2\\
\nonumber
 &+& 3 \rho_t^2 \|\m{y}^*_{t-1}(\m{x}^*_{t-1})-\m{y}^*_{t}(\m{x}^*_{t})\|^2\\
 \nonumber
&  \leq &  3 L_{\m{y}}^2 \rho_{t-1}^2\|\m{x}_{t-1} -\m{x}^*_{t-1}\|^2 \\
\nonumber
 &+ & 3 L_{\m{y}}^2   \rho_t^2 \|\m{x}_{t}-\m{x}^*_{t}\|^2 \\
&+&3 \rho_t^2  \|\m{y}^*_{t-1}(\m{x}^*_{t-1})-\m{y}^*_{t}(\m{x}^*_{t})\|^2,
\end{eqnarray}
where the second inequality uses the assumption that  $\rho_{t} \leq  \rho_{t-1}$ for all $t \in[T]$.

Now, combining \eqref{eqn:lem:lin3}, \eqref{eqn:lem:lin4}, and \eqref{eqn:lem:lin5}, we obtain
\begin{equation*}
\begin{split}
 \sum_{t=1}^T \left(1-2\rho^2_t\right)  \|  \m{y}_{t+1} -  \m{y}^*_t(\m{x}_t) \|^2 & \leq \rho^2_1   \|\m{y}_{1} - \m{y}_{1}^*(\m{x}_1)\|^2 + 12 L_{\m{y}}^2\sum_{t=1}^T \rho_t^2 \|\m{x}_{t}-\m{x}^*_{t}\|^2 \\
&+ 6 \sum_{t=2}^T \rho_t^2 \|\m{y}^*_{t-1}(\m{x}^*_{t-1})-\m{y}^*_{t}(\m{x}^*_{t})\|^2,
  \end{split}
\end{equation*}
which together with our assumption that  $\rho_{t} \leq  \rho_{t-1}$  completes the proof.
\end{proof}
The following lemma is an extension of \citep[Proposition 2]{mokhtari2016online} to online bilevel optimization, characterizing the dynamics of the tracking error $\|\mathbf{x}_{t}-\mathbf{x}_{t}^*\|^2$. Specifically, it shows that $\|\mathbf{x}_{t+1} - \mathbf{x}_{t}^*\|^2$ can be upper bounded in terms of $\|\mathbf{x}_{t}-\mathbf{x}_{t}^*\|^2$ and $\|\mathbf{y}_{t+1} - \mathbf{y}^*_{t}(\mathbf{x}_{t})\|^2$.
\begin{lem}\label{lm:xxx}
Suppose Assumption~\ref{assu:f} holds and $\alpha_t=\alpha \leq   {1}/{\ell_{f,1}}$ for all $t \in [T]$. Further, assume functions  $\{ f_t(\m{x}, \m{y}^*_t(\m{x}))\}_{t=1}^T$ are strongly convex with parameter $\mu_{f}$. Then, for the sequence $\{(\m{x}_t, \m{y}_t)\}_{t=1}^T$ generated by  Algorithm~\ref{alg:obgd}, we have 
\begin{align}\label{ree}
  \|\m{x}_{t+1} - \m{x}_{t}^*\|^2
&\leq (1-\gamma) \|\m{x}_{t}-\m{x}_{t}^*\|^2
  + \frac{2M_f^2\alpha}{\big(1+\frac{\mu_f}{2}\alpha \big)\mu_f} 
 \norm{\m{y}_{t+1} -\m{y}^*_{t}(\m{x}_{t})}^2,
\end{align}
where
\begin{equation}\label{eqn:gamma}
 \gamma:=\frac{3\mu_f}{\frac{2}{\alpha}+\mu_f} \in (0,1].
\end{equation}
\end{lem}
\begin{proof}
From $\mu_f$-strong convexity of $f_{t}$, we get
\begin{align}\label{dfaa}
\nonumber f_{t}(\m{x}, \m{y}_{t}^*(\m{x}))&\geq  f_{t}(\m{x}_{t},\m{y}_{t}^*(\m{x}_{t}))+
  \left\langle\nabla f_{t}(\m{x}_{t}, \m{y}_{t}^*(\m{x}_{t})), \m{x}-\m{x}_{t}\right \rangle+\frac{\mu_f}{2} \left\|\m{x}-\m{x}_{t}\right\|^2
 \\\nonumber &=f_{t}(\m{x}_{t},\m{y}_{t}^*(\m{x}_{t}))
 +
  \left\langle\nabla f_{t}(\m{x}_{t},\m{y}_{t}^*(\m{x}_{t})), \m{x}_{t+1}-\m{x}_{t}\right \rangle
 \\&+
  \left\langle\nabla f_{t}(\m{x}_{t}, \m{y}_{t}^*(\m{x}_{t})), \m{x}-\m{x}_{t+1}\right \rangle+\frac{\mu_f}{2} \left\|\m{x}-\m{x}_{t}\right\|^2.
\end{align}
According to the optimality condition of the update rule $\m{x}_{t+1} =\Pi_{\mc{X}} \left[ \m{x}_{t}- \alpha\tilde{\nabla} f_t(\m{x}_t,\m{y}_{t+1})\right]$, we have
\begin{align*}
\langle \tilde{\nabla} f_t(\m{x}_t,\m{y}_{t+1})+\frac{1}{\alpha}(\m{x}_{t+1}-\m{x}_{t}),\m{x}-\m{x}_{t+1} \rangle \geq 0,
\end{align*}
which is equivalent to
\begin{align*}
 \langle \tilde{\nabla} f_t(\m{x}_t,\m{y}_{t+1})-\nabla f_{t}(\m{x}_{t}, \m{y}_{t}^*(\m{x}_{t}))+\nabla f_{t}(\m{x}_{t}, \m{y}_{t}^*(\m{x}_{t})),\m{x}-\m{x}_{t+1} \rangle
\geq \frac{1}{\alpha}\langle \m{x}_{t}-\m{x}_{t+1},\m{x}-\m{x}_{t+1} \rangle.
\end{align*}
Hence, 
\begin{align*}
\langle \nabla f_{t}(\m{x}_{t},\m{y}_{t}^*(\m{x}_{t})),\m{x}-\m{x}_{t+1} \rangle
&\geq \frac{1}{\alpha}\langle \m{x}_{t}-\m{x}_{t+1},\m{x}-\m{x}_{t+1} \rangle
\\&+\langle  \nabla f_{t}(\m{x}_{t},\m{y}_{t}^*(\m{x}_{t}))-\tilde{\nabla} f_t(\m{x}_t,\m{y}_{t+1}),\m{x}-\m{x}_{t+1} \rangle.
\end{align*}
Substituting this inequality in  \eqref{dfaa}, we get
\begin{align}\label{fdbb}
\nonumber
 f_{t}(\m{x}, \m{y}_{t}^*(\m{x}))&\geq  f_{t}(\m{x}_{t},\m{y}_{t}^*(\m{x}_{t}))+
  \left\langle\nabla f_{t}(\m{x}_{t}, \m{y}_{t}^*(\m{x}_{t})), \m{x}_{t+1}-\m{x}_{t}\right \rangle
  +\frac{1}{\alpha}\langle \m{x}_{t}-\m{x}_{t+1},\m{x}-\m{x}_{t+1} \rangle
  \\&+\langle \nabla f_{t}(\m{x}_{t},\m{y}_{t}^*(\m{x}_{t}))-\tilde{\nabla} f_t(\m{x}_t,\m{y}_{t+1}),\m{x}-\m{x}_{t+1} \rangle+\frac{\mu_f}{2} \left\|\m{x}-\m{x}_{t}\right\|^2.
\end{align}
In addition, $\ell_{f,1}$-smoothness of $f_{t}$ (Assumption~\ref{assu:f:a2}) gives
\begin{align*}
f_{t}(\m{x}_{t+1}, \m{y}_{t}^*(\m{x}_{t+1}))&\leq  f_{t}(\m{x}_{t},\m{y}_{t}^*(\m{x}_{t}))+
  \left\langle\nabla f_{t}(\m{x}_{t}, \m{y}_{t}^*(\m{x}_{t})), \m{x}_{t+1}-\m{x}_{t}\right \rangle+\frac{\ell_{f,1}}{2} \left\|\m{x}_{t+1}-\m{x}_{t}\right\|^2
  \\&\leq  f_{t}(\m{x}_{t},\m{y}_{t}^*(\m{x}_{t}))+
  \left\langle\nabla f_{t}(\m{x}_{t}, \m{y}_{t}^*(\m{x}_{t})), \m{x}_{t+1}-\m{x}_{t}\right \rangle+\frac{1}{2\alpha} \left\|\m{x}_{t+1}-\m{x}_{t}\right\|^2,
\end{align*}
where the inequality is by $\alpha\leq 1/\ell_{f,1}$.

Thus,
\begin{align}\label{dfv}
\nonumber &\quad f_{t}(\m{x}_{t},\m{y}_{t}^*(\m{x}_{t}))+
  \left\langle\nabla f_{t}(\m{x}_{t}, \m{y}_{t}^*(\m{x}_{t})), \m{x}_{t+1}-\m{x}_{t}\right \rangle
  \\\nonumber &\geq f_{t}(\m{x}_{t+1}, \m{y}_{t}^*(\m{x}_{t+1}))-\frac{1}{2\alpha} \left\|\m{x}_{t+1}-\m{x}_{t}\right\|^2
  \\&\geq f_{t}(\m{x}_{t}^*,\m{y}_{t}^*(\m{x}_{t}^*))+\frac{\mu_f}{2}\left\|\m{x}_{t+1}-\m{x}_{t}^*\right\|^2-\frac{1}{2\alpha} \left\|\m{x}_{t+1}-\m{x}_{t}\right\|^2,
\end{align}
where the second inequality holds since
from Lemma \ref{lm:cc}, we have
\begin{align*}
f_{t}(\m{x}_{t}^*, \m{y}_{t}^*(\m{x}_{t}^*))\leq f_{t}(\m{x}_{t+1}, \m{y}_{t}^*(\m{x}_{t+1}))-\frac{\mu_f}{2}\left\|\m{x}_{t+1}-\m{x}_{t}^*\right\|^2.
\end{align*}

Combining \eqref{fdbb} and \eqref{dfv}, we get
\begin{align*}
 \nonumber f_{t}(\m{x}, \m{y}_{t}^*(\m{x}))&\geq f_{t}(\m{x}_{t}^*, \m{y}_{t}^*(\m{x}_{t}^*))+\frac{\mu_f}{2}\left\|\m{x}_{t+1}-\m{x}_{t}^*\right\|^2
 -\frac{1}{2\alpha} \left\|\m{x}_{t+1}-\m{x}_{t}\right\|^2
 +\frac{1}{\alpha}\langle \m{x}_{t}-\m{x}_{t+1},\m{x}-\m{x}_{t+1} \rangle
 \\&+\langle \nabla f_{t}(\m{x}_{t},\m{y}_{t}^*(\m{x}_{t}))-\tilde{\nabla} f_t(\m{x}_t,\m{y}_{t+1}),\m{x}-\m{x}_{t+1} \rangle+\frac{\mu_f}{2} \left\|\m{x}-\m{x}_{t}\right\|^2.
\end{align*}
By setting $\m{x}=\m{x}_{t}^*$, we have
\begin{align*}
 \nonumber  f_{t}(\m{x}_{t}^*,\m{y}_{t}^*(\m{x}_{t}^*))
  &\geq f_{t}(\m{x}_{t}^*,\m{y}_{t}^*(\m{x}_{t}^*))+\frac{\mu_f}{2}\left\|\m{x}_{t+1}-\m{x}_{t}^*\right\|^2
 \\\nonumber &-\frac{1}{2\alpha} \left\|\m{x}_{t+1}-\m{x}_{t}\right\|^2
 +\frac{1}{\alpha}\langle \m{x}_{t}-\m{x}_{t+1},\m{x}_{t}^*-\m{x}_{t+1} \rangle
\\ & +\langle \nabla f_{t}(\m{x}_{t},\m{y}_{t}^*(\m{x}_{t}))-\tilde{\nabla} f_t(\m{x}_t,\m{y}_{t+1}),\m{x}_{t}^*-\m{x}_{t+1} \rangle
+\frac{\mu_f}{2} \left\|\m{x}_{t}^*-\m{x}_{t}\right\|^2.
\end{align*}
Since $\langle u,v\rangle \geq -\frac{c}{2} \|u \|^2-\frac{1}{2c}\| v\|^2 $, $\forall u,v\in \mathbb{R}^n$, $ \forall c>0$, we obtain
\begin{align*}
 \nonumber 0\geq &-\frac{1}{2\alpha} \left\|\m{x}_{t+1}-\m{x}_{t}\right\|^2
 +\frac{1}{\alpha}\langle \m{x}_{t}-\m{x}_{t+1},\m{x}_{t}^*-\m{x}_{t} \rangle
\\\nonumber & +\frac{1}{\alpha}\langle \m{x}_{t}-\m{x}_{t+1},\m{x}_{t}-\m{x}_{t+1} \rangle
  -\frac{1}{2c }\|\nabla f_{t}(\m{x}_{t},\m{y}_{t}^*(\m{x}_{t}))-\tilde{\nabla} f_t(\m{x}_t,\m{y}_{t+1}) \|^2
 \\&
 +\left(\frac{\mu_f}{2}-\frac{c}{2}\right)\|\m{x}_{t}^*-\m{x}_{t+1} \|^2
 +\frac{\mu_f}{2} \left\|\m{x}_{t}^*-\m{x}_{t}\right\|^2.
\end{align*}
After rearranging, we obtain
\begin{align}\label{dgdv}
\nonumber  &\quad \langle \m{x}_{t}-\m{x}_{t+1},\m{x}_{t}^*-\m{x}_{t} \rangle
\\\nonumber &\leq \frac{1}{2} \left\|\m{x}_{t+1}-\m{x}_{t}\right\|^2
-\frac{\mu_f}{2}\alpha \left\|\m{x}_{t}^*-\m{x}_{t}\right\|^2
-\| \m{x}_{t}-\m{x}_{t+1}\|^2
+(\frac{c}{2}-\frac{\mu_f}{2})\alpha\|\m{x}_{t}^*-\m{x}_{t+1} \|^2
\\&+ \frac{\alpha}{2c}\|\nabla f_{t}(\m{x}_{t},\m{y}_{t}^*(\m{x}_{t}))-\tilde{\nabla} f_t(\m{x}_t,\m{y}_{t+1}) \|^2.
\end{align}
Note that
\begin{align}\label{s1}
\nonumber \|\m{x}_{t+1} - \m{x}_{t}^*\|^2&=\|\m{x}_{t+1}-\m{x}_{t}+\m{x}_{t} - \m{x}_{t}^*\|^2
 \\\nonumber &=\|\m{x}_{t+1}-\m{x}_{t}\|^2+\|\m{x}_{t} - \m{x}_{t}^*\|^2+2\langle \m{x}_{t+1}-\m{x}_{t},\m{x}_{t} - \m{x}_{t}^* \rangle
  \\\nonumber &\leq (1-\mu_f\alpha)\left\|\m{x}_{t}^*-\m{x}_{t}\right\|^2+(c-\mu_f)\alpha\|\m{x}_{t}^*-\m{x}_{t+1} \|^2
  \\&+\frac{\alpha}{c} \|\nabla f_{t}(\m{x}_{t},\m{y}_{t}^*(\m{x}_{t}))-\tilde{\nabla} f_t(\m{x}_t,\m{y}_{t+1}) \|^2,
\end{align}
where the inequality follows from \eqref{dgdv}.

From Lemma \ref{lem:lips}, we obtain
\begin{align}\label{dcjk}
 \|\nabla f_{t}(\m{x}_{t},\m{y}_{t}^*(\m{x}_{t}))-\tilde{\nabla} f_t(\m{x}_t,\m{y}_{t+1}) \|^2&\leq  M_f^2 \norm{\m{y}_{t+1} -\m{y}^*_{t}(\m{x}_{t})}^2.
\end{align}
Inserting \eqref{dcjk} into \eqref{s1} implies
\begin{align*}
  \nonumber  \left(1-(c-\mu_f)\alpha\right)\|\m{x}_{t+1} - \m{x}_{t}^*\|^2
 &\leq \left(1-\mu_f\alpha  \right)\left\|\m{x}_{t}^*-\m{x}_{t}\right\|^2
  +\frac{M_f^2}{c}\alpha\norm{\m{y}_{t+1} -\m{y}^*_{t}(\m{x}_{t})}^2.
\end{align*}
By setting $ c = {\mu_f}/{2}$, we get
\begin{align*}
  \nonumber \left(1+\frac{\mu_f}{2}\alpha \right)\|\m{x}_{t+1} - \m{x}_{t}^*\|^2
&\leq \left(1-\mu_f\alpha \right)\left\|\m{x}_{t}^*-\m{x}_{t}\right\|^2
  +\frac{2M_f^2}{\mu_f}\alpha \left(\norm{\m{y}_{t} -\m{y}^*_{t}(\m{x}_{t})}^2
   +\norm{\m{v}_{t} -\m{v}^*_{t}(\m{x}_{t})}^2\right)
   .
\end{align*}
Finally, dividing both sides of the above inequality by $\big(1+\frac{\mu_f}{2}\alpha \big)$, we obtain
\begin{align*}
\nonumber \|\m{x}_{t+1} - \m{x}_{t}^*\|^2&\leq \gamma\left\|\m{x}_{t}^*-\m{x}_{t}\right\|^2
  +\frac{2M_f^2\alpha}{\left(1+\frac{\mu_f}{2}\alpha \right)\mu_f} \left(\norm{\m{y}_{t} -\m{y}^*_{t}(\m{x}_{t})}^2
   +\norm{\m{v}_{t} -\m{v}^*_{t}(\m{x}_{t})}^2\right),
\end{align*}
where $\gamma$ is defined in \eqref{eqn:gamma}.

Since the strong convexity constant $\mu_f$ is smaller than the constant of gradient Lipschitz continuity $\ell_{f,1}$, and the constant $\alpha$ is chosen such that $\alpha \leq 1/\ell_{f,1}$, we have $\alpha \leq 1/\mu_f$, which implies that $\gamma \leq 1$.
\end{proof}

The following lemma plays a key role in the proof of OAGD in the strongly convex setting. It basically shows that under certain conditions on inner and outer step sizes, $\sum_{t=1}^T\|\m{x}_{t} - \m{x}^*_{t}\|^p$ can be bounded in terms of $P_{p,T}$ and $Y_{p,T}$.

\begin{lem}\label{lm:xxxb}
Suppose Assumption~\ref{assu:f} holds. Further, assume functions $\{ f_t(\m{x}, \m{y}^*_t(\m{x}))\}_{t=1}^T$ are strongly convex with parameter $\mu_{f}$. In Algorithm~\ref{alg:obgd}, for all $t \in [T]$, choose
\begin{align*}
\beta_t&=\beta =\frac{2 }{\ell_{g,1}+\mu_g},\\
\alpha_t&=\alpha \leq \min\left\{\frac{1}{\ell_{f,1}}, \frac{\mu_f}{128M_f^2 L_{\m{y}}^2}\right\},  \quad \textnormal{and}~~\\
K_t&>\left\lceil\frac{(\kappa_g+1)\log \rho^{-2}_t}{4}\right\rceil .
\end{align*}
Then, Algorithm~\ref{alg:obgd} guarantees the following.
\begin{enumerate}[label={\textnormal{{H}\arabic*.}}]
\item \label{vff} 
If $\rho_t=\rho \leq \frac{\gamma}{1+\gamma}$, then
\begin{align*}
 \nonumber  \sum_{t=1}^{T} \|\m{x}_{t}-\m{x}^*_{t}\|
&\leq \frac{4}{\gamma}\left(\|\m{x}_{1}-\m{x}^*_{1}\|+P_{1,T}\right)
\\&+\frac{1}{2L_{\m{y}}} \left(  \|\m{y}_{1} - \m{y}_{1}^*(\m{x}_1)\|+   Y_{1,T}\right).
\end{align*}
\item \label{vgg}
If $\rho_t=\rho \leq \frac{\sqrt{\gamma}}{\sqrt{2}\sqrt{\gamma+1}}$, then
\begin{align*}
\nonumber \sum_{t=1}^{T} \|\m{x}_{t}-\m{x}^*_{t}\|^2&\leq \frac{64}{23\gamma}\left(\|\m{x}_{1}-\m{x}^*_{1}\|^2
  +(1+\frac{2}{\gamma} )P_{2,T}\right)\\
  &+\frac{3}{92L_{\m{y}}^2}\left( \|\m{y}_{1} - \m{y}_{1}^*(\m{x}_1)\|^2+ 6Y_{2,T}\right).
\end{align*}
\end{enumerate}
Here, $\gamma$ is defined in \eqref{eqn:gamma}; $P_{p,T}$ and $Y_{p,T}$ are defined in \eqref{eq:pathlength}.
\end{lem}
\begin{proof}
We first show \ref{vff}.  It follows from the triangle inequality that
\begin{align}\label{cvxv}
\nonumber \sum_{t=1}^{T} \|\m{x}_{t}-\m{x}^*_{t}\|&=\|\m{x}_{1}-\m{x}^*_{1}\|+\sum_{t=2}^{T} \|\m{x}_{t}-\m{x}^*_{t}\|
 \\\nonumber &\leq \|\m{x}_{1}-\m{x}^*_{1}\|+\sum_{t=2}^{T} \Big(\|\m{x}_{t}-\m{x}^*_{t-1}\|+ \|\m{x}^*_{t-1}-\m{x}^*_{t}\|\Big)
 \\&\leq \|\m{x}_{1}-\m{x}^*_{1}\|+\sum_{t=1}^{T} \|\m{x}_{t+1}-\m{x}^*_{t}\|+P_{1,T}.
\end{align}
Next, we provide an upper bound for the second term on the right-hand side of \eqref{cvxv}. Note that our choice of the stepsize $\alpha_t$ in the statement of Lemma~\ref{lm:xxxb} satisfies the condition of Lemma \ref{lm:xxx}. Hence, from Lemma \ref{lm:xxx} and the inequality $\sqrt{a+b}\leq \sqrt{a}+\sqrt{b}$ for $a,b\geq 0$, we get
\begin{align*}
  \|\m{x}_{t+1}-\m{x}^*_{t}\| &\leq  \sqrt{1-\gamma} \|\m{x}_{t}-\m{x}_{t}^*\|
  +M_f\sqrt{\frac{2\alpha}{\mu_f}} \norm{\m{y}_{t+1} -\m{y}^*_{t}(\m{x}_{t})} .  
\end{align*}
Summing both sides of the above inequality from $t = 1$ to $T$, we get
\begin{align}\label{vv}
\nonumber  \sum_{t=1}^{T} \|\m{x}_{t+1}-\m{x}^*_{t}\| &\leq  \sqrt{1-\gamma} \sum_{t=1}^{T}\|\m{x}_{t}-\m{x}_{t}^*\|
  +M_f\sqrt{\frac{2\alpha}{\mu_f}} \sum_{t=1}^{T}\norm{\m{y}_{t+1} -\m{y}^*_{t}(\m{x}_{t})} 
  \\&\leq (1-\frac{\gamma}{2}) \sum_{t=1}^{T}\|\m{x}_{t}-\m{x}_{t}^*\|
  +M_f\sqrt{\frac{2\alpha}{\mu_f}} \sum_{t=1}^{T}\norm{\m{y}_{t+1} -\m{y}^*_{t}(\m{x}_{t})}.
\end{align}
Here, $\gamma=\frac{3\mu_f}{\frac{2}{\alpha}+\mu_f}$, and the second inequality follows since $\sqrt{1-a}\leq 1-\frac{a}{2}$ for any $a\leq 1$.

Note that our assumption on $\rho$ in the statement of Lemma~\ref{lm:xxxb}--\ref{vff} satisfies the requirement of Lemma \ref{eq:lower:dynamic}--\ref{item:leml2}. Hence, we have 
\begin{align}\label{ddn}
\nonumber\sum_{t=1}^T \|  \m{y}_{t+1} -  \m{y}^*_t(\m{x}_t) \| &\leq   \frac{\rho_1}{1-\rho_1} \|\m{y}_{1} - \m{y}_{1}^*(\m{x}_1)\| \\
&+  \frac{1}{1-\rho_1}  \left(2 L_{\m{y}}\sum_{t=1}^{T}\rho_{t}\|\m{x}_{t}-\m{x}^*_{t}\|+  \sum_{t=2}^T\rho_t \|\m{y}^*_{t-1}(\m{x}^*_{t-1})-\m{y}^*_{t}(\m{x}^*_{t})\| \right).
  \end{align}
Substituting \eqref{ddn} into \eqref{vv}, we get
\begin{align}\label{cc}
\nonumber  \sum_{t=1}^{T} \|\m{x}_{t+1}-\m{x}^*_{t}\| &\leq \sum_{t=1}^{T}\left(1-\frac{\gamma}{2}+\frac{M_fL_{\m{y}}2\sqrt{2}}{\sqrt{\mu_f}}\sqrt{\alpha}\frac{\rho_{t}}{1-\rho_1}\right) \|\m{x}_{t}-\m{x}_{t}^*\|
  \\&+M_f\sqrt{\frac{2\alpha}{\mu_f}} \left( \frac{\rho_1}{1-\rho_1} \|\m{y}_{1} - \m{y}_{1}^*(\m{x}_1)\|+ \frac{1}{1-\rho_1} \sum_{t=2}^T\rho_t \|\m{y}^*_{t-1}(\m{x}^*_{t-1})-\m{y}^*_{t}(\m{x}^*_{t})\|\right).
\end{align}
By setting $\rho_t=\rho \leq \frac{\gamma}{1+\gamma}$, we have
\begin{equation}\label{bb}
\frac{M_fL_{\m{y}}2\sqrt{2}}{\sqrt{\mu_f}}\sqrt{\alpha}\frac{\rho}{1-\rho} \leq \frac{M_fL_{\m{y}}2\sqrt{2}}{\sqrt{\mu_f}}\sqrt{\alpha}\gamma\leq \frac{\gamma}{4},
\end{equation}
where the second inequality holds due to our assumption on the outer stepsize, i.e., $\alpha \leq \frac{\mu_f}{128M_f^2 L_{\m{y}}^2}$.

Combining the above two inequalities \eqref{bb} and \eqref{cc}, we conclude that
\begin{align*}
\nonumber  \sum_{t=1}^{T} \|\m{x}_{t+1}-\m{x}^*_{t}\| &\leq \left(1-\frac{\gamma}{4}\right)\sum_{t=1}^{T} \|\m{x}_{t}-\m{x}_{t}^*\|
  +M_f\sqrt{\frac{2\alpha}{\mu_f}} \left( \frac{\rho}{1-\rho} \|\m{y}_{1} - \m{y}_{1}^*(\m{x}_1)\|+ \frac{\rho}{1-\rho}  Y_{1,T}\right)
  \\&\leq \left(1-\frac{\gamma}{4}\right)\sum_{t=1}^{T} \|\m{x}_{t}-\m{x}_{t}^*\|
  +\frac{\gamma}{8L_{\m{y}}} \left(  \|\m{y}_{1} - \m{y}_{1}^*(\m{x}_1)\|+  Y_{1,T}\right),
\end{align*}
where the second inequality is by  $\alpha \leq \frac{\mu_f}{128M_f^2 L_{\m{y}}^2}$ and $\rho \leq \frac{\gamma}{1+\gamma}$.
\\
Plugging this into \eqref{cvxv} yields 
\begin{align*}
\nonumber \sum_{t=1}^{T} \|\m{x}_{t}-\m{x}^*_{t}\|&\leq \|\m{x}_{1}-\m{x}^*_{1}\|+\left(1-\frac{\gamma}{4}\right) \sum_{t=1}^{T}\|\m{x}_{t}-\m{x}_{t}^*\|\\
  &+\frac{\gamma}{8L_{\m{y}}} \left(  \|\m{y}_{1} - \m{y}_{1}^*(\m{x}_1)\|+   Y_{1,T}\right)+P_{1,T}.
\end{align*}
Rearranging terms in the above inequality finishes the proof.

We now show part \ref{vgg} of the lemma.

First, note that
\begin{align}\label{nnk}
\nonumber  \sum_{t=1}^{T} \|\m{x}_{t}-\m{x}^*_{t}\|^2&=\|\m{x}_{1}-\m{x}^*_{1}\|^2+\sum_{t=2}^{T} \|\m{x}_{t}-\m{x}^*_{t}\|^2
 \\\nonumber &\leq \|\m{x}_{1}-\m{x}^*_{1}\|^2+(1+\frac{\gamma}{2} )\sum_{t=2}^{T}\|\m{x}_{t}- \m{x}_{t-1}^*\|^2+(1+\frac{2}{\gamma} )\sum_{t=2}^{T}\|\m{x}_{t-1}^*-\m{x}_{t}^*\|^2
 \\&\leq \|\m{x}_{1}-\m{x}^*_{1}\|^2+(1+\frac{\gamma}{2} )\sum_{t=1}^{T}\|\m{x}_{t+1}- \m{x}_{t}^*\|^2+(1+\frac{2}{\gamma} )P_{2,T},
\end{align}
where the first inequality follows from Lemma \ref{lem:trig}.

Similar to the previous case, we provide an upper bound for the second term on the right-hand side of \eqref{nnk}.

Since our assumption on $\alpha$ in the statement of Lemma~\ref{lm:xxxb} satisfies the requirement of  Lemma \ref{lm:xxx}, we have
\begin{align*}
 \nonumber   \| \m{x}_{t+1} - \m{x}_{t}^* \|^2&\leq (1-\gamma) \|\m{x}_{t}- \m{x}_{t}^*\|^2
  +\frac{2M_f^2\alpha}{\big(1+\frac{\mu_f}{2}\alpha \big)\mu_f} \norm{\m{y}_{t+1} -\m{y}^*_{t}(\m{x}_{t})}^2,
\end{align*}
which implies
\begin{align*}
\nonumber (1+\frac{\gamma}{2} )\|\m{x}_{t+1}- \m{x}_{t}^*\|^2 &\leq (1+\frac{\gamma}{2} )(1-\gamma) \|\m{x}_{t}- \m{x}_{t}^*\|^2
  +(1+\frac{\gamma}{2} )\frac{2M_f^2\alpha}{\big(1+\frac{\mu_f}{2}\alpha \big)\mu_f} \norm{\m{y}_{t+1} -\m{y}^*_{t}(\m{x}_{t})}^2
  \\&\leq (1-\frac{\gamma}{2} ) \|\m{x}_{t}- \m{x}_{t}^*\|^2
  +(1+\frac{\gamma}{2} )\frac{2M_f^2\alpha}{\mu_f} \norm{\m{y}_{t+1} -\m{y}^*_{t}(\m{x}_{t})}^2,
\end{align*}
where the second inequality is due to $(1+a/2)(1-a)\leq (1-a/2-a^2/2)\leq 1-a/2$.

Summing both sides of the above inequality from $t=1$ to $T$, we get
\begin{align}\label{ss}
 \nonumber  (1+\frac{\gamma}{2} )\sum_{t=1}^T\|\m{x}_{t+1}- \m{x}_{t}^*\|^2 &\leq   (1-\frac{\gamma}{2} )\sum_{t=1}^T\|\m{x}_{t}- \m{x}_{t}^*\|^2
  +(1+\frac{\gamma}{2} )\frac{2M_f^2\alpha}{\mu_f} \sum_{t=1}^T\norm{\m{y}_{t+1} -\m{y}^*_{t}(\m{x}_{t})}^2
  \\&\leq   (1-\frac{\gamma}{2} )\sum_{t=1}^T\|\m{x}_{t}- \m{x}_{t}^*\|^2
  +\frac{3M_f^2\alpha}{\mu_f} \sum_{t=1}^T\norm{\m{y}_{t+1} -\m{y}^*_{t}(\m{x}_{t})}^2,
\end{align}
where the second inequality follows since $\gamma =\frac{3\mu_f}{\frac{2}{\alpha}+\mu_f}\leq 1$; see \eqref{eqn:gamma}.

Since our assumption on $\rho$ in the statement of Lemma~\ref{lm:xxxb}--\ref{vgg} satisfies the requirement of  Lemma \ref{eq:lower:dynamic}--\ref{item:leml1} , we have 
\begin{align*}
\nonumber \sum_{t=1}^T \|  \m{y}_{t+1} -  \m{y}^*_t(\m{x}_t) \|^2  \leq &  \frac{\rho^2_1}{1-2\rho^2_1} \|\m{y}_{1} - \m{y}_{1}^*(\m{x}_1)\|^2 \\
+ \frac{6}{1-2\rho^2_1} & \left(2 L_{\m{y}}^2\sum_{t=1}^{T}\rho_{t}^2\|\m{x}_{t}-\m{x}^*_{t}\|^2 + \sum_{t=2}^T\rho_t^2 \|\m{y}^*_{t-1}(\m{x}^*_{t-1})-\m{y}^*_{t}(\m{x}^*_{t})\|^2 \right).
\end{align*}
This together with \eqref{ss} gives
\begin{align}\label{ssc}
 \nonumber   (1+\frac{\gamma}{2} )\sum_{t=1}^T \| \m{x}_{t+1} - \m{x}_{t}^* \|^2&\leq   \left(1-\frac{\gamma}{2} +\frac{36M_f^2L_{\m{y}}^2\alpha}{\mu_f} \frac{\rho^2_t}{(1-2\rho^2_1)}\right)\sum_{t=1}^T\|\m{x}_{t}- \m{x}_{t}^*\|^2
  \\&
  +\frac{3M_f^2\alpha}{\mu_f} \left(\frac{\rho^2_1}{1-2\rho^2_1} \|\m{y}_{1} - \m{y}_{1}^*(\m{x}_1)\|^2+ \frac{6}{1-2\rho^2_1}\sum_{t=2}^T\rho_t^2 \|\m{y}^*_{t-1}(\m{x}^*_{t-1})-\m{y}^*_{t}(\m{x}^*_{t})\|^2\right).
\end{align} 
Since by our setting $\rho_t=\rho \leq \frac{\sqrt{\gamma}}{\sqrt{2}\sqrt{\gamma+1}}$, we have
\begin{align*}
 \frac{36M_f^2L_{\m{y}}^2\alpha}{\mu_f}  \frac{\rho^2}{(1-2\rho^2)}\leq 
 \frac{36M_f^2L_{\m{y}}^2\alpha}{\mu_f}  \frac{\gamma}{2}\leq \frac{9\gamma}{64},
\end{align*}
where the second inequality uses our assumption on the stepsize, i.e., $\alpha \leq \frac{\mu_f}{128 M_f^2 L_{\m{y}}^2}$.

Combining the above inequality with \eqref{ssc}, we obtain 
\begin{align}\label{gg}
 \nonumber  (1+\frac{\gamma}{2} )\sum_{t=1}^T \| \m{x}_{t+1} - \m{x}_{t}^* \|^2&\leq   \left(1-\frac{23\gamma}{64} \right)\sum_{t=1}^T\|\m{x}_{t}- \m{x}_{t}^*\|^2
  \\\nonumber&
  +\frac{3M_f^2\alpha}{\mu_f} \left(\frac{\rho^2}{1-2\rho^2} \|\m{y}_{1} - \m{y}_{1}^*(\m{x}_1)\|^2+ \frac{6\rho^2}{1-2\rho^2}Y_{2,T}\right)
  \\&\leq   \left(1-\frac{23\gamma}{64} \right)\sum_{t=1}^T\|\m{x}_{t}- \m{x}_{t}^*\|^2
  +\frac{3\gamma}{256L_{\m{y}}^2} \left( \|\m{y}_{1} - \m{y}_{1}^*(\m{x}_1)\|^2+ 6Y_{2,T}\right),
\end{align}
where the second inequality follows from 
$\alpha \leq \frac{\mu_f}{128 M_f^2 L_{\m{y}}^2}$ and
$\rho \leq \frac{\sqrt{\gamma}}{\sqrt{2}\sqrt{\gamma+1}}$.
\\
Further, plugging \eqref{gg} into \eqref{nnk} yields:
\begin{align*}
\nonumber \sum_{t=1}^{T} \|\m{x}_{t}-\m{x}^*_{t}\|^2&\leq \|\m{x}_{1}-\m{x}^*_{1}\|^2+\left(1-\frac{23\gamma}{64} \right)\sum_{t=1}^T\|\m{x}_{t}- \m{x}_{t}^*\|^2
  +(1+\frac{2}{\gamma} )P_{2,T}
  \\&
  +\frac{3\gamma}{256L_{\m{y}}^2} \left( \|\m{y}_{1} - \m{y}_{1}^*(\m{x}_1)\|^2+ 6Y_{2,T}\right).
\end{align*}
Rearranging the inequality gives \ref{vgg}.
\end{proof}

\subsubsection{Proof of Theorem~\ref{thm:dynamic:strong:B-OGD}}\label{sec:app:dreg:up}
\begin{proof}
Assumption \ref{assu:f:a1} implies that $\| \nabla f_{t}(\m{x}, \m{y}(\m{x}))\|\leq \ell_{f,0}$ for any $t\in [T]$ and any $\m{x}\in \mathcal{X}$. Thus, we get
\begin{align*}
\sum_{t=1}^{T}\left( f_{t}(\m{x}_{t},\m{y}^*_t(\m{x}_{t})) - f_{t}(\m{x}_t^*,\m{y}^*_t(\m{x}_{t}^*))\right) 
&
\leq   \ell_{f,0}\sum_{t=1}^{T}  \|\m{x}_{t}-\m{x}^*_{t}\|.
\end{align*}
Note that our choices of the stepsize $\alpha_t$ and $K_t$ in the theorem statement  can be rewritten as 
\begin{align}\label{kkk}
\nonumber 
\alpha_t&=\alpha \leq \min\left\{\frac{1}{\ell_{f,1}}, \frac{\mu_f}{128M_f^2 L_{\m{y}}^2}\right\},\quad \textnormal{and}\quad
\\
K_t&>\left\lceil\frac{(\kappa_g+1)\log \rho^{-2}_t}{4}\right\rceil,\quad \textnormal{with}\quad \rho_t=\rho \leq \frac{\gamma}{3(1+\gamma)}.
\end{align}
These choices satisfy the condition of Lemma \ref{lm:xxxb}--\ref{vff}. Hence, from Lemma \ref{lm:xxxb}--\ref{vff}, we get
\begin{align*}
 \nonumber  \sum_{t=1}^{T} \|\m{x}_{t}-\m{x}^*_{t}\|
\leq \frac{4}{\gamma}\left(\|\m{x}_{1}-\m{x}^*_{1}\|+P_{1,T}\right)
  +\frac{1}{2L_{\m{y}}} \left(  \|\m{y}_{1} - \m{y}_{1}^*(\m{x}_1)\|+   Y_{1,T}\right),
\end{align*}
which, in conjunction with Assumption~\ref{assu:dom}, implies
\begin{align}\label{bnp}
\nonumber \sum_{t=1}^{T}\left( f_{t}(\m{x}_{t},\m{y}^*_t(\m{x}_{t})) - f_{t}(\m{x}_t^*,\m{y}^*_t(\m{x}_{t}^*))\right)
&\leq \frac{4\ell_{f,0}}{\gamma}\left(\|\m{x}_{1}-\m{x}^*_{1}\|+P_{1,T}\right)
 +\frac{\ell_{f,0}}{2L_{\m{y}}} \left( \|\m{y}_{1} - \m{y}_{1}^*(\m{x}_1)\|+   Y_{1,T}\right)
\\\nonumber &\leq \frac{4\ell_{f,0}}{\gamma}\left(D+P_{1,T}\right)
 +\frac{\ell_{f,0}}{2L_{\m{y}}} \left( D'+   Y_{1,T}\right)
\\&=\mc{O}\left(1+   S_{1,T}\right).
\end{align}
In the following, we show that the dynamic regret can also be upper bounded by $ S_{2,T}=P_{2,T}+ Y_{2,T}$.

It follows from Lemma \ref{lem:lips} that 
    \begin{align}\label{eqn:prthm:non1x}
\nonumber  f_{t}(\m{x}_{t}, \m{y}^*_t(\m{x}_{t})) - f_{t}(\m{x}_t^*,\m{y}^*_t(\m{x}_{t}^*))
&\leq \left\langle \nabla f_{t}(\m{x}_t^*, \m{y}^*_t(\m{x}_{t}^*)), \m{x}_{t} - \m{x}_t^* \right\rangle
+ \frac{L_f}{2} \| \m{x}_{t} - \m{x}_t^* \|^2
\\&\leq \frac{1}{2}\| \nabla f_{t}(\m{x}_t^*, \m{y}^*_t(\m{x}_{t}^*))\|^2+\frac{1}{2}( 1+L_f )\| \m{x}_{t} - \m{x}_t^* \|^2.
    \end{align}
Summing the inequality \eqref{eqn:prthm:non1x} over $t\in [T]$, we get
    \begin{align}\label{nkmP}
\nonumber  &\quad \sum_{t=1}^{T}\left( f_{t}(\m{x}_{t},\m{y}^*_t(\m{x}_{t})) - f_{t}(\m{x}_t^*,\m{y}^*_t(\m{x}_{t}^*))\right)
   \\ &\leq  \frac{1}{2}\sum_{t=1}^{T}\| \nabla f_{t}(\m{x}_t^*, \m{y}^*_t(\m{x}_{t}^*))\|^2
   +\frac{1}{2}( 1+L_f )\sum_{t=1}^{T}\| \m{x}_{t} - \m{x}_t^* \|^2.
    \end{align}
The choices of $\alpha_t$ and $K_t$ in \eqref{kkk} satisfy the condition of Lemma \ref{lm:xxxb}--\ref{vgg} as well. Hence, from Lemma \ref{lm:xxxb}--\ref{vgg} and Assumption~\ref{assu:dom}, we get
\begin{align}\label{gfz}
 \nonumber
 \sum_{t=1}^{T} \|\m{x}_{t}-\m{x}^*_{t}\|^2&\leq \frac{64}{23\gamma}\left(\|\m{x}_{1}-\m{x}^*_{1}\|^2
  +(1+\frac{2}{\gamma} )P_{2,T}\right)\\\nonumber
  &+\frac{3}{92L_{\m{y}}^2}\left( \|\m{y}_{1} - \m{y}_{1}^*(\m{x}_1)\|^2+ 6Y_{2,T}\right)
  \\&\leq \frac{64}{23\gamma}\left(D^2
  +(1+\frac{2}{\gamma} )P_{2,T}\right)+\frac{3}{92L_{\m{y}}^2}\left( {D'}^2+ 6Y_{2,T}\right).
\end{align}
Putting together \eqref{gfz} and \eqref{nkmP}, we get
\begin{align}\label{cdf}
\nonumber  &\quad \sum_{t=1}^{T}\left( f_{t}(\m{x}_{t},\m{y}^*_t(\m{x}_{t})) - f_{t}(\m{x}_t^*,\m{y}^*_t(\m{x}_{t}^*))\right)
  \\\nonumber &   \leq  \frac{1}{2}\sum_{t=1}^{T}\| \nabla f_{t}(\m{x}_t^*, \m{y}^*_t(\m{x}_{t}^*))\|^2
 +\frac{32}{23\gamma}( 1+L_f )  \left(D^2
  +(1+\frac{2}{\gamma} )P_{2,T}\right)
  \\\nonumber&+\frac{3}{184L_{\m{y}}^2}( 1+L_f )\left( {D'}^2+ 6Y_{2,T}\right) 
\\&=\mc{O}\left(1+  \sum^T_{t=1} \norm{\nabla f_t(\m{x}^*_t, \m{y}^*_t(\m{x}^*_t) )}^2 + P_{2,T}+ Y_{2,T}\right).
\end{align}
Now, from \eqref{bnp} and \eqref{cdf}, we have
\begin{align*}
  &\quad \sum_{t=1}^{T}\left( f_{t}(\m{x}_{t},\m{y}^*_t(\m{x}_{t})) - f_{t}(\m{x}_t^*,\m{y}^*_t(\m{x}_{t}^*))\right)
  \\& \leq   \mc{O} \left(1+\min\big\{S_{1,T},\sum^T_{t=1} \norm{\nabla f_t(\m{x}^*_t, \m{y}^*_t(\m{x}^*_t) )}^2+S_{2,T}\big\}\right).
\end{align*}
This completes the proof.
\end{proof}
\subsubsection{Proof of Theorem~\ref{thm:static:strong:B-OGD}}\label{sec:app:reg:up}

\begin{proof}
Recall the update rule of Algorithm \ref{alg:obgd} (with $w=1$): $\m{x}_{t+1} = \Pi_{\mc{X}}\left[\m{x}_t - \alpha_t\tilde{\nabla} f_t(\m{x}_t,\m{y}_{t+1})\right]$. From the Pythagorean theorem, we get
\begin{equation*}\label{eqn1:thm1:oagd}
\begin{aligned}
 \norm{\m{x}_{t+1} - \m{x}^*}^2 &\leq \norm{ \m{x}_t - \alpha_t \tilde{\nabla} f_t(\m{x}_t,\m{y}_{t+1}) - \m{x}^*}^2
\\
&=\norm{\m{x}_{t} - \m{x}^*}^2- 2\alpha_t \langle\tilde{\nabla} f_t(\m{x}_t, \m{y}_{t+1}),\m{x}_{t} - \m{x}^*\rangle + \alpha_t^2 \norm{\tilde{\nabla} f_t(\m{x}_t, \m{y}_{t+1})}^2
\\&\leq \norm{\m{x}_{t} - \m{x}^*}^2- 2\alpha_t \langle\tilde{\nabla} f_t(\m{x}_t, \m{y}_{t+1}),\m{x}_{t} - \m{x}^*\rangle 
\\&+2 \alpha_t^2 \norm{\nabla f_t(\m{x}_t,\m{y}^*_t(\m{x}_t))}^2+2\alpha_t^2 \norm{\tilde{\nabla} f_t(\m{x}_t,  \m{y}_{t+1})-\nabla f_t(\m{x}_t,\m{y}^*_t(\m{x}_t))}^2,
\end{aligned}
\end{equation*}
where the second inequality uses Lemma \ref{lem:trig} with $c=1$.
\\
Rearranging the above inequality yields
\begin{equation}\label{eq1:static:strong}
\begin{split}
\left\langle\nabla f_t(\m{x}_t, \m{y}^*_t(\m{x}_t)),\m{x}_t - \m{x}^*\right\rangle &\leq  \frac{1}{2\alpha_t} \norm{\m{x}_t - \m{x}^*}^2 - \frac{1}{2\alpha_t} \norm{\m{x}_{t+1} - \m{x}^*}^2+\alpha_t \norm{\nabla f_t(\m{x}_t, \m{y}^*_t(\m{x}_t))}^2 \\
  &+ \alpha_t \norm{\nabla f_t(\m{x}_t,\m{y}^*_t(\m{x}_t))-\tilde{\nabla} f_t(\m{x}_t,  \m{y}_{t+1})}^2
\\
&+ \left\langle  \nabla f_t(\m{x}_t,\m{y}^*_t(\m{x}_t))- \tilde{\nabla} f_t(\m{x}_t,  \m{y}_{t+1}), \m{x}_t-\m{x}^* \right \rangle.
\end{split}
\end{equation}
From Lemma~\ref{lem:lips}, for any $c >0$,  we have
\begin{equation}
\begin{aligned}
\label{eq2:static:strong}
\left\langle  \nabla f_t(\m{x}_t,\m{y}^*_t(\m{x}_t))- \tilde{\nabla} f_t(\m{x}_t,  \m{y}_{t+1}),\m{x}_t- \m{x}^* \right \rangle 
&\leq \frac{c}{2}\norm{\m{x}_t-\m{x}^*  }^2+\frac{1}{2c}\norm{\nabla f_t(\m{x}_t,\m{y}^*_t(\m{x}_t))- \tilde{\nabla} f_t(\m{x}_t,  \m{y}_{t+1}) }^2
\\&\leq \frac{c}{2}\norm{\m{x}_t - \m{x}^*}^2+ \frac{ M^2_{f}}{2c} \norm{\m{y}_{t+1} -\m{y}^*_t(\m{x}_t)}^2,
\\\norm{\nabla f_t(\m{x}_t,\m{y}^*_t(\m{x}_t))-\tilde{\nabla} f_t(\m{x}_t,  \m{y}_{t+1})}^2 &\leq M_f^2 \norm{\m{y}_{t+1} -\m{y}^*_t(\m{x}_t)}^2. 
\end{aligned}
\end{equation}
Combining \eqref{eq1:static:strong} and \eqref{eq2:static:strong}, we get
\begin{equation}\label{eq3:static:strong}
\begin{split}
\langle \nabla f_t(\m{x}_t, \m{y}^*_t(\m{x}_t)),\m{x}_t - \m{x}^*\rangle &\leq  \frac{1}{2}\left(\frac{1}{\alpha_t} +c\right)\norm{\m{x}_t - \m{x}^*}^2
- \frac{1}{2\alpha_t}\norm{\m{x}_{t+1} - \m{x}^*}^2 
\\
&+\alpha_t \norm{\nabla f_t(\m{x}_t, \m{y}^*_t(\m{x}_t))}^2 + M_f^2 \left(\alpha_t+\frac{1}{2c}\right) \norm{\m{y}_{t+1} -\m{y}^*_t(\m{x}_t)}^2.
\end{split}
\end{equation}
Applying the definition of $\mu_f$-strong convexity to the pair of points $\{\m{x}_t$,$\m{x}^*\}$, we have
\begin{equation}\label{eq4:static:strong}
\begin{split}
2\left(f_t(\m{x}_t,\m{y}^*_t(\m{x}_t)) - f_t(\m{x}^*,\m{y}^*_t(\m{x}^*))\right) &\leq 2 \left\langle \nabla f_t(\m{x}_t, \m{y}^*_t(\m{x}_t)),\m{x}_t-\m{x}^*\right\rangle-\mu_f\norm{\m{x}_t-\m{x}^*}^2.
\end{split}
\end{equation}
From \eqref{eq3:static:strong} and \eqref{eq4:static:strong}, we get
\begin{equation*}\label{eq:dyanamic:strong}
\begin{split}
2\left(f_t(\m{x}_t,\m{y}^*_t(\m{x}_t)) - f_t(\m{x}^*,\m{y}^*_t(\m{x}^*))\right) &\leq 2 \langle\nabla f_t(\m{x}_t, \m{y}^*_t(\m{x}_t)),\m{x}_t-\m{x}^*\rangle-\mu_f \norm{\m{x}_t-\m{x}^*}^2 \\
&\leq  \left(\frac{1}{\alpha_t}+ c- \mu_f\right) \norm{\m{x}_t - \m{x}^*}^2
- \frac{1}{\alpha_t}\norm{\m{x}_{t+1} - \m{x}^*}^2 
\\
&+2\alpha_t \norm{\nabla f_t(\m{x}_t, \m{y}^*_t(\m{x}_t))}^2 +2 M_f^2\left( \alpha_t+\frac{1}{c}\right) \norm{\m{y}_{t+1} -\m{y}^*_t(\m{x}_t)}^2.
\end{split}
\end{equation*}
Summing \label{eq5:static:strong} from $t= 1$ to $T$, we have
\begin{align}\label{eqn:reg:bound:in:stat}
\nonumber&\quad  2 \sum_{t=1}^T  \left(f_t(\m{x}_t,\m{y}^*_t(\m{x}_t)) - f_t(\m{x}^*,\m{y}^*_t(\m{x}^*)) \right)\\\nonumber
&\leq  \left(\frac{1}{\alpha_1}+ c- \mu_f\right) \norm{\m{x}_1 - \m{x}^*}^2+\sum_{t=2}^T \|\m{x}_t-\m{x}^*\|^2
\left(\frac{1}{\alpha_{t}}-\frac{1}{\alpha_{t-1}} +c-\mu_f\right) \\
&+2
\sum_{t=1}^{T}  \alpha_t \norm{\nabla f_t(\m{x}_t, \m{y}^*_t(\m{x}_t))}^2
+2 M_f^2 \sum_{t=1}^{T} \left( \alpha_t+\frac{1}{c}\right)\norm{\m{y}_{t+1} -\m{y}^*_t(\m{x}_t)}^2.
\end{align}
Next, we bound the last term in the right-hand side of \eqref{eqn:reg:bound:in:stat}.
To proceed, note that our choice of $K_t$ as   
\begin{equation*}
    K_t>\left\lceil\frac{(\kappa_g+1)\log \rho^{-2}_t}{4}\right\rceil, \quad \textnormal{with}\quad \rho_t=\rho\leq \frac{1}{\sqrt{2}}\sqrt{\frac{\theta}{1+\theta}},\quad\textnormal{and}\quad \theta:=\frac{c}{12 L_{\m{y}}^2M_f^2(\alpha_1+\frac{1}{c})}
\end{equation*}
satisfies the condition required in Lemma \ref{eq:lower:dynamic}--\ref{item:leml1}. Hence, 
from  Lemma \ref{eq:lower:dynamic}--\ref{item:leml1}, we obtain
\begin{align}\label{eqn:nm}
\nonumber&\quad 2M_f^2\sum_{t=1}^{T}  \left( \alpha_t+\frac{1}{c}\right) \norm{\m{y}_{t+1} -\m{y}^*_t(\m{x}_t)}^2  
\\&\leq 2M_f^2\left( \alpha_1+\frac{1}{c}\right) \frac{6\rho^2}{1-2\rho^2}  \left(\frac{1}{6}\norm{\m{y}_{1} - \m{y}_{1}^*(\m{x}_1)}^2  +  2  L_{\m{y}}^2 \sum_{t=1}^{T} \norm{\m{x}_{t}-\m{x}^*}^2  + \bar{Y}_{2,T} \right).
\end{align}
Since $\rho\leq \frac{1}{\sqrt{2}}\sqrt{\frac{\theta}{1+\theta}}$, we have
\begin{align*}
  4 L_{\m{y}}^2M_f^2\left(\alpha_1+\frac{1}{c}\right)  \frac{6\rho^2}{1-2\rho^2} \leq c,
\end{align*}
which, in conjunction with Eq. \eqref{eqn:nm}, yields
\begin{align}\label{eqn:bound:in:stat}
\nonumber
&\quad 2M_f^2\sum_{t=1}^{T}  \left( \alpha_t+\frac{1}{c}\right) \norm{\m{y}_{t+1} -\m{y}^*_t(\m{x}_t)}^2  \\&\leq  \frac{c}{12L_{\m{y}}^2} \norm{\m{y}_{1} - \m{y}_{1}^*(\m{x}_1)}^2+c \sum_{t=1}^{T} \norm{\m{x}_{t}-\m{x}^*}^2      +  \frac{c}{2L_{\m{y}}^2} \bar{Y}_{2,T}.
\end{align}
Thus, combining \eqref{eqn:bound:in:stat} and \eqref{eqn:reg:bound:in:stat} and using Assumption \ref{assu:f:a1}, we obtain
\begin{align}\label{eqn:reg:bound:in:stat2}
\nonumber  2 \sum_{t=1}^T  \left(f_t(\m{x}_t,\m{y}^*_t(\m{x}_t)) - f_t(\m{x}^*,\m{y}^*_t(\m{x}^*)) \right)
 &\leq   \left(\frac{1}{\alpha_1}+ c- \mu_f\right) \norm{\m{x}_1 - \m{x}^*}^2\\
 \nonumber 
 &+\sum_{t=2}^T \|\m{x}_t-\m{x}^*\|^2
 \left(\frac{1}{\alpha_{t}}-\frac{1}{\alpha_{t-1}} + 2c -\mu_f\right) \\
 &+2\ell_{f,0}^2\sum_{t=1}^{T}  \alpha_t+\frac{c}{12L_{\m{y}}^2} \|\m{y}_{1} - \m{y}_{1}^*(\m{x}_1)\|^2  +  \frac{c}{2L_{\m{y}}^2} \bar{Y}_{2,T} .
\end{align}
By setting $c=\mu_f/4$ and  $\alpha_{t} = 2/(\mu_f t)$ and utilizing Assumption~\ref{assu:dom}, we have
\begin{align}\label{eqn:pl}
   2 \sum_{t=1}^T  \left(f_t(\m{x}_t,\m{y}^*_t(\m{x}_t)) - f_t(\m{x}^*,\m{y}^*_t(\m{x}^*)) \right)
 &\leq \frac{4\ell_{f,0}^2}{\mu_f}\sum_{t=1}^{T}  \frac{1}{t}+\frac{\mu_f}{48L_{\m{y}}^2} {D'}^2  +  \frac{\mu_f}{8L_{\m{y}}^2} \bar{Y}_{2,T}.
\end{align}
Let
\begin{equation}\label{vvz}
    \begin{split}
e_1&:=\frac{\mu_f}{48L_{\m{y}}^2} {D'}^2,\quad
e_2:= \frac{\mu_f}{8L_{\m{y}}^2}, ~~~~~e_3:= \frac{4\ell_{f,0}^2}{\mu_f}.
    \end{split}
\end{equation}
Combining  Lemma~\ref{lem:rimann}--\ref{itm:lem:a:rimann} and \eqref{vvz}  with \eqref{eqn:pl}, we obtain
\begin{equation*}\label{eq:app:bsreg:bound:strong}
\begin{split}
\textnormal{BS-Reg}_T \leq e_3\log T+ e_2 \bar{Y}_{2,T}+e_1.
\end{split}
\end{equation*}
\end{proof}

\begin{cor}\label{cor:dynamic:strong:B-OGD}
Under the same setting as Theorem~\ref{thm:dynamic:strong:B-OGD},
\begin{enumerate} [label=\textnormal{\textbf{(\Roman*)}},wide, labelindent=0pt]
\item  If function $f_t$ is non-negative for each $t \in [T]$, then

\begin{align}\label{eqn:ref:str:non:bounds0}
 \textnormal{BD-Reg}_T
   \leq   \mc{O} \left(1+\min\{S_{1,T}, F_{T} +S_{2,T}\}\right),
\end{align}
where $F_{T} :=  \sum_{t=1}^{T} f_t(\m{x}^*_{t}, \m{y}^*_t(\m{x}^*_{t}))$.
\item \label{ref:str:bounds} If $\sum^T_{t=1} \norm{\nabla f_t(\m{x}^*_t, \m{y}^*_t(\m{x}^*_t) )}=\mc{O}(S_{2,T})$, then
\begin{align}\label{eqn:ref:str:bounds}
  \textnormal{BD-Reg}_T \leq  \mc{O} \left(1+\min\{ S_{1,T},S_{2,T}\}\right).
\end{align}
\end{enumerate}
\end{cor}
\begin{proof}
\begin{enumerate}[label=\textnormal{(\roman*)}]
\item If $f_t \geq 0$ for all $t \in[T]$, then it follows from Lemma~\ref{lem:nonneg:smooth} that $ \norm{\nabla f_t(\m{x}^*_t, \m{y}^*_t(\m{x}^*_t) )} \leq \sqrt{4 \ell_{f,1}  f_t(\m{x}^*_t, \m{y}^*_t(\m{x}^*_t) )}$. This together with \eqref{eq:bdreg:bound:strong} gives the desired result.
\item If $\m{x}_t^*\in \argmin_{\m{x} \in \mc{X}} f_t(\m{x})$ for all $t \in[T]$ and the minimizers $\{\m{x}_t^*\}_{t=1}^T$ lie in the interior of the domain $\mc{X}$, we have $\norm{\nabla f_t(\m{x}^*_t, \m{y}^*_t(\m{x}^*_t) )}=0$ which together with \eqref{eq:bdreg:bound:strong} gives the desired result.  If $\sum_{t=1}^{T} \norm{\nabla f_t(\m{x}^*_t, \m{y}^*_t(\m{x}^*_t) )}= \mc{O}( S_{2,T})$, then \eqref{eqn:ref:str:bounds} follows from \eqref{eq:bdreg:bound:strong}.
\end{enumerate}
\end{proof}

Corollary~\ref{cor:dynamic:strong:B-OGD} naturally interpolates between single-level and bilevel regret.  In the case when $Y_{1,T}=Y_{2,T}=0$, Eq.~\eqref{eqn:ref:str:non:bounds0} gives a single-level regret for strongly convex, smooth, and non-negative losses, similar to \citep{srebro2010smoothness,zhao2020dynamic}. We note that if the minimizers $\{\m{x}_t^*\}_{t=1}^T$ lie in the interior of the domain $\mc{X}$, we have $\norm{\nabla f_t(\m{x}^*_t, \m{y}^*_t(\m{x}^*_t) )} =0$ for all $t \in [T]$, which implies the  $\mc{O}\left(1+\min\{ S_{1,T},S_{2,T}\}\right)$ regret bound.

\subsection{Proof for Convex OBO with Partial Information}\label{sec:app:reg:partial}

\subsubsection{Proof of Theorem~\ref{thm:dynamic:convex:B-OGD}}\label{sec:app:convex}
\begin{proof}
From the update rule of Algorithm \ref{alg:obgd} (with $w=1$), we have $\m{x}_{t+1} = \Pi_{\mc{X}}\left[\m{x}_t - \alpha\tilde{\nabla} f_t(\m{x}_t,\m{y}_{t+1})\right]$. Now, from the Pythagorean theorem, we get
\begin{align}\label{eqn:conve:pyht}
\nonumber
\frac{1}{2}\norm{\m{x}_{t+1} - \m{x}^*_t}^2& \leq  \frac{1}{2}\norm{\m{x}_t - \alpha   \tilde{\nabla} f_t(\m{x}_t,\m{y}_{t+1})  - \m{x}^*_t}^2 \\\nonumber
&=\frac{1}{2}\norm{\m{x}_t - \m{x}^*_t}^2 - \alpha  \langle \tilde{\nabla} f_t(\m{x}_t,\m{y}_{t+1}),\m{x}_t - \m{x}^*_t\rangle + \frac{\alpha^2}{2}\norm{  \tilde{\nabla} f_t(\m{x}_t,\m{y}_{t+1})}^2
\\\nonumber&\leq \frac{1}{2}\norm{\m{x}_t - \m{x}^*_t}^2 - \alpha  \langle \tilde{\nabla} f_t(\m{x}_t,\m{y}_{t+1}),\m{x}_t - \m{x}^*_t\rangle + \alpha^2\norm{  \nabla f_t(\m{x}_t,\m{y}^*_t(\m{x}_t))}^2
\\&+ \alpha^2\norm{\tilde{\nabla} f_t(\m{x}_t,  \m{y}_{t+1})-\nabla f_t(\m{x}_t,\m{y}^*_t(\m{x}_t))}^2.
\end{align}
Here, the second inequality holds because of the Lemma \ref{lem:trig} by setting $c=1$.
\\
Rearranging the above inequality and summing over $t\in [T]$, we obtain
\begin{subequations}\label{eqn:convex:reg1}
\begin{align}
\sum_{t=1}^{T}\left\langle \nabla f_t(\m{x}_t,\m{y}^*_t(\m{x}_t)),\m{x}_t - \m{x}^*_t\right\rangle&\leq \sum_{t=1}^{T} \left(\frac{1}{2\alpha}\norm{\m{x}_t - \m{x}^*_t}^2 - \frac{1}{2\alpha}\norm{\m{x}_{t+1} - \m{x}^*_t}^2 \right)\label{eqn:convex:reg11}\\
&+ \alpha \sum_{t=1}^{T}\norm{\nabla f_t(\m{x}_t,\m{y}^*_t(\m{x}_t))-\tilde{\nabla} f_t(\m{x}_t,  \m{y}_{t+1})}^2\label{eqn:convex:reg12}\\
&+\sum_{t=1}^{T}\left\langle  \nabla f_t(\m{x}_t,\m{y}^*_t(\m{x}_t))- \tilde{\nabla} f_t(\m{x}_t,  \m{y}_{t+1}), \m{x}_t-\m{x}^*_t \right\rangle
\label{eqn:convex:reg13}\\
&+\alpha\sum_{t=1}^{T}\norm{ \nabla f_t(\m{x}_t,\m{y}^*_t(\m{x}_t)) }^2.
\label{eqn:convex:reg14s}
\end{align}
\end{subequations}
Next, we upper bound each term of \eqref{eqn:convex:reg1}.
\\
$\bullet$~\textbf{Bounding \eqref{eqn:convex:reg11}:}
Observe that
\begin{subequations}
\begin{align}\label{eqn:convex:reg2}
\nonumber  \eqref{eqn:convex:reg11}
 & \leq\frac{1}{2\alpha}\|\m{x}_1 - \m{x}^*_1\|^2-\frac{1}{2\alpha}\|\m{x}_{T+1} - \m{x}^*_T\|^2+\frac{1}{2\alpha}\sum_{t=2}^{T}\big(\|\m{x}_t - \m{x}^*_t\|^2 -\|\m{x}_{t} -\m{x}^*_{t-1}\|^2\big)
 \\\nonumber&\leq \frac{1}{2\alpha}\|\m{x}_1 - \m{x}^*_1\|^2+\frac{1}{2\alpha}\sum_{t=2}^{T}\|\m{x}^*_{t}-\m{x}_{t} +\m{x}^*_{t-1}- \m{x}_{t}\| \|\m{x}^*_{t}-\m{x}^*_{t-1} \|
 \\
& \leq \frac{D^2}{2\alpha} +\frac{D}{2\alpha}\sum_{t=2}^{T}\|\m{x}^*_{t}-\m{x}^*_{t-1} \|,
\end{align}
where the second inequality follows since
\begin{align*}
\|\m{x}^*_{t}-\m{x}_{t} \|^2 - \|\m{x}^*_{t-1}-\m{x}_{t} \|^2&=\left\langle \m{x}^*_{t}-\m{x}_{t}+\m{x}^*_{t-1}-\m{x}_{t},\m{x}^*_{t}- \m{x}_{t}-(\m{x}^*_{t-1} - \m{x}_{t}) \right\rangle
\\& \leq \|\m{x}^*_{t}-\m{x}_{t} +\m{x}^*_{t-1}- \m{x}_{t}\| \|\m{x}^*_{t}-\m{x}^*_{t-1} \|,
\end{align*}
and the last inequality follows from Assumption \ref{assu:dom}.
\\
$\bullet$~\textbf{Bounding \eqref{eqn:convex:reg12} and \eqref{eqn:convex:reg13}:}
It follows from Lemma~\ref{lem:lips} and Assumption~\ref{assu:dom} that
\begin{align*}
\sum_{t=1}^{T}\left\langle  \nabla f_t(\m{x}_t,\m{y}^*_t(\m{x}_t))- \tilde{\nabla} f_t(\m{x}_t,  \m{y}_{t+1}), \m{x}_t-\m{x}^*_t \right\rangle &\leq \sum_{t=1}^{T}\|\m{x}^*_t-\m{x}_t  \| \| \nabla f_t(\m{x}_t,\m{y}^*_t(\m{x}_t))- \tilde{\nabla} f_t(\m{x}_t,  \m{y}_{t+1})\|
\\&\leq    D M_f \sum_{t=1}^{T}\norm{\m{y}_{t+1} -\m{y}^*_t(\m{x}_t)}, \\
\sum_{t=1}^{T}\norm{\nabla f_t(\m{x}_t,\m{y}^*_t(\m{x}_t))-\tilde{\nabla} f_t(\m{x}_t,  \m{y}_{t+1})}^2 &\leq M_f^2 \sum_{t=1}^{T}\norm{\m{y}_{t+1} -\m{y}^*_t(\m{x}_t)}^2 .
\end{align*}
Hence,
\begin{align}\label{eqn:convex:reg3}
\eqref{eqn:convex:reg12} + \eqref{eqn:convex:reg13}  \leq    D M_f \sum_{t=1}^{T}\norm{\m{y}_{t+1} -\m{y}^*_t(\m{x}_t)} +\alpha M_f^2 \sum_{t=1}^{T}\norm{\m{y}_{t+1} -\m{y}^*_t(\m{x}_t)}^2 .
\end{align}
\\
$\bullet$~\textbf{Bounding \eqref{eqn:convex:reg14s}:}
By the smoothness of $\phi_t(\m{x)}=  f_t(\m{x},\m{y}^*_t(\m{x}))$, for any $\m{x}\in \mb{R}^{d_1}$, we have
\[
\phi_t(\m{x}) - \phi_t(\m{x}_t) \leq \langle \nabla \phi_t(\m{x}_t), \m{x} - \m{x}_t\rangle + \frac{L_f}{2}\|\m{x} - \m{x}_t\|^2.
\]
Let $\m{x}=\m{x}'_t = \m{x}_t - \frac{1}{L_f}\nabla \phi_t(\m{x}_t)$ in the above inequality, we have
$\phi_t(\m{x}'_t) - \phi_t(\m{x}_t)\leq  - \frac{\|\nabla \phi_t(\m{x}_t)\|^2}{2L_f}$.

It follows from the convexity of $f_t(\m{x}, \cdot )$ that
\[
\phi_t(\m{x}'_t)\geq \phi_t(\m{x}^*_t) + \langle\nabla \phi_t(\m{x}^*_t),\m{x}'_t - \m{x}^*_t\rangle=\phi_t(\m{x}^*_t),
\]
where the equality follows from the vanishing gradient condition ($\exists(\m{x}^*_{t}, \m{y}^*_t(\m{x}^*_{t})) \in \mc{X} \times \mb{R}^{d_2}$ such that $\nabla f_t(\m{x}^*_t, \m{y}^*_t(\m{x}^*_t) )=0 $ for  all $t \in [T]$).

Hence,
\[
\phi_t(\m{x}^*_t) - \phi_t(\m{x}_t)\leq \phi_t(\m{x}'_t) - \phi_t(\m{x}_t) \leq - \frac{\|\nabla \phi_t(\m{x}_t)\|^2}{2L_f},
\]
which implies that
\begin{equation}\label{eqn:convex:reg31}
\eqref{eqn:convex:reg14s} \leq    2\alpha L_f \sum_{t=1}^{T}\left(f_t(\m{x}_t,\m{y}^*_t(\m{x}_t)) - f_t(\m{x}^*_t,\m{y}^*_t(\m{x}^*_t))\right).
\end{equation}
\end{subequations}
\\
$\bullet$~\textbf{Bounding  $\sum_{t=1}^T \left\langle \nabla f_t(\m{x}_t, \m{y}^*_t(\m{x}_t)),\m{x}_t - \m{x}^*_t\right\rangle$}:  Substituting \eqref{eqn:convex:reg2}--\eqref{eqn:convex:reg31} into \eqref{eqn:convex:reg1},  we get
\begin{equation}
\begin{aligned}\label{eqn:fe}
\sum_{t=1}^T \left\langle \nabla f_t(\m{x}_t, \m{y}^*_t(\m{x}_t)),\m{x}_t - \m{x}^*_t\right\rangle 
&\leq \sum_{t=1}^T \Big(\alpha M_f^2  \norm{\m{y}_{t+1} -\m{y}^*_t(\m{x}_t)}^2 + D M_f   \norm{\m{y}_{t+1} -\m{y}^*_t(\m{x}_t)} 
\\
& +2\alpha L_f \left(f_t(\m{x}_t,\m{y}^*_t(\m{x}_t)) - f_t(\m{x}^*_t,\m{y}^*_t(\m{x}^*_t))\right)\Big) 
\\&+ \frac{D^2}{2\alpha}+\frac{D}{2\alpha}\sum_{t=2}^{T}\|\m{x}^*_{t}-\m{x}^*_{t-1} \| .
\end{aligned}
\end{equation}
$\bullet$~\textbf{Completing the proof of Theorem~\ref{thm:dynamic:convex:B-OGD}:}
By the convexity of $f_t$ and \eqref{eqn:fe}, we obtain
\begin{equation}\label{eqn:regbc:1}
\begin{aligned}
&\quad \sum_{t=1}^T \left( f_t(\m{x}_t,\m{y}^*_t(\m{x}_t)) - f_t(\m{x}^*_t,\m{y}^*_t(\m{x}^*_t))  \right)
\\&\leq \sum_{t=1}^T \left\langle \nabla f_t(\m{x}_t, \m{y}^*_t(\m{x}_t)),\m{x}_t - \m{x}^*_t\right\rangle
\\&\leq \sum_{t=1}^T \Big(\alpha M_f^2  \norm{\m{y}_{t+1} -\m{y}^*_t(\m{x}_t)}^2 + D M_f   \norm{\m{y}_{t+1} -\m{y}^*_t(\m{x}_t)} 
\\
& +2\alpha L_f \big(f_t(\m{x}_t,\m{y}^*_t(\m{x}_t)) - f_t(\m{x}^*_t,\m{y}^*_t(\m{x}^*_t))\big)\Big) 
+ \frac{D^2}{2\alpha}+\frac{D}{2\alpha}P_{1,T} .
\end{aligned}
\end{equation}
Note that our choice of $K_t$ in the theorem statement as 
\begin{eqnarray}\label{rr}
  K_t>\left\lceil\frac{(\kappa_g+1)\log \rho^{-2}_t}{4}\right\rceil, \quad \textnormal{with}\quad \rho_t=\frac{1}{2t^2}
\end{eqnarray}
satisfies the condition of Lemma \ref{eq:lower:dynamic}--\ref{item:leml1}. Moreover, using Lemma \ref{lem:rimann}--\ref{itm:lem:c:rimann}, we have
\begin{eqnarray*}
\sum_{t=1}^T\rho_t =\frac{\pi^2}{12}~~\textnormal{and}~~\sum_{t=1}^T\rho_t^2 =\frac{\pi^4}{360}.
\end{eqnarray*}

This, together with Lemma~\ref{eq:lower:dynamic}--\ref{item:leml1} and Assumption \ref{assu:dom}, gives
\begin{subequations}\label{eqn:regbc:2}
\begin{equation}
\begin{aligned}\label{nop}
\sum_{t=1}^T &\norm{\m{y}_{t+1} -\m{y}^*_t(\m{x}_t)}^2  \leq   \frac{\rho^2_1}{1-2\rho^2_1}  \|\m{y}_{1} - \m{y}_{1}^*(\m{x}_1)\|^2\\
& +  \frac{6}{1-2\rho^2_1}  \left( 2L_{\m{y}}^2\sum_{t=1}^{T}{\rho}_{t}^2\|\m{x}_{t}-\m{x}^*_{t}\|^2 +  \sum_{t=2}^T{\rho}_t^2 \|\m{y}^*_{t-1}(\m{x}^*_{t-1})-\m{y}^*_{t}(\m{x}^*_{t})\|^2 \right)\\
& \leq  \frac{1}{2} {D'}^2 + \frac{\pi^4}{30}L_{\m{y}}^2  D^2+  \frac{\pi^4}{60}L_{\m{y}}^2 Y_{2,T}.
\end{aligned}
\end{equation}
Similarly, we obtain
\begin{equation}\label{eqn:regbc:3}
\begin{aligned}
\sum_{t=1}^T& \norm{\m{y}_{t+1} -\m{y}^*_t(\m{x}_t)} \leq   \frac{\rho_1}{1-\rho_1} \|\m{y}_{1} - \m{y}_{1}^*(\m{x}_1)\|\\
& +  \frac{1}{1-\rho_1}  \left(2 L_{\m{y}}\sum_{t=1}^{T}\rho_{t}\|\m{x}_{t}-\m{x}^*_{t}\| +  \sum_{t=2}^T\rho_t \|\m{y}^*_{t-1}(\m{x}^*_{t-1})-\m{y}^*_{t}(\m{x}^*_{t})\| \right) \\
&\leq D'+  \pi^2L_{\m{y}} D+  \frac{\pi^2}{2}L_{\m{y}} Y_{1,T}.
\end{aligned}
\end{equation}
\end{subequations}
Now, let
\begin{align*}
\dot{E}_1 (\alpha) &:= \alpha M_f^2\left( \frac{1}{2}{D'}^2+ \frac{\pi^4}{30} L_{\m{y}}^2  D^2\right)  + D M_f \left( D'+  \pi^2L_{\m{y}} D\right).
\end{align*}
Substituting \eqref{nop} and \eqref{eqn:regbc:3} into \eqref{eqn:regbc:1}, we have
\begin{align}\label{final:cons:dconv}
\nonumber
& (1-2\alpha L_f)  \sum_{t=1}^T \left(f_t(\m{x}_t,\m{y}^*_t(\m{x}_t)) - f_t(\m{x}^*_t,\m{y}^*_t(\m{x}^*_t)) \right)\\
& \leq \dot{E}_1 (\alpha) +\frac{D^2}{2\alpha} + \frac{D}{2\alpha}P_{1,T} +\alpha M_f^2  \frac{\pi^4}{60}L_{\m{y}}^2 Y_{2,T} + D M_f \frac{\pi^2}{2}L_{\m{y}} Y_{1,T}.
\end{align}
Let $\alpha \leq 1/(4L_f)$ and
\begin{equation*}
\begin{aligned}
\dot{c}_1(\alpha) &:= \frac{D}{2\alpha(1-2\alpha L_f)},\\
\dot{c}_2(\alpha) &:=\frac{1}{1-2\alpha L_f}  D M_f \frac{\pi^2}{2}L_{\m{y}},\\
\dot{c}_{3}(\alpha)&:=\frac{1}{1-2\alpha L_f} \alpha M_f^2  \frac{\pi^4}{60}L_{\m{y}}^2 ,\\
\dot{c}_4(\alpha) &:=\frac{\dot{E}_1 (\alpha)}{1-2\alpha L_f} + \frac{D^2}{2\alpha(1-2\alpha L_f)}.
\end{aligned}
\end{equation*}
The above definitions together with \eqref{final:cons:dconv} implies
\begin{align*}
\textnormal{BD-Reg}_T \leq  \dot{c}_1(\alpha) P_{1,T} + \dot{c}_2(\alpha) Y_{1,T} +\dot{c}_3(\alpha) Y_{2,T} + \dot{c}_4(\alpha).
\end{align*}
This gives the desired result in \eqref{eqn:convex:dbound}.
\end{proof}
\subsubsection{Proof of Theorem~
\ref{itm:thm:s:convex}}
\begin{proof}
The proof is similar to Theorem \ref{thm:dynamic:convex:B-OGD}.  From the update rule of Algorithm \ref{alg:obgd} (with $w=1$), we have $\m{x}_{t+1} = \Pi_{\mc{X}}\left[\m{x}_t - \alpha_t\tilde{\nabla} f_t(\m{x}_t,\m{y}_{t+1})\right]$. Now, applying the Pythagorean theorem and employing an argument identical to \eqref{eqn:conve:pyht} and \eqref{eqn:convex:reg1}, we obtain 
\begin{subequations}\label{eqn:static:convex:reg1}
\begin{align}
\sum_{t=1}^{T}\left\langle \nabla f_t(\m{x}_t,\m{y}^*_t(\m{x}_t)),\m{x}_t - \m{x}^*\right\rangle&\leq \sum_{t=1}^{T}\left(\frac{1}{2\alpha_t}\norm{\m{x}_t - \m{x}^*}^2 - \frac{1}{2\alpha_t}\norm{\m{x}_{t+1} - \m{x}^*}^2 \right)\label{eqn:static:convex:reg11}\\
&+ \sum_{t=1}^{T}   \alpha_t\norm{\nabla f_t(\m{x}_t,\m{y}^*_t(\m{x}_t))-\tilde{\nabla} f_t(\m{x}_t,  \m{y}_{t+1})}^2\label{eqn:static:convex:reg12}\\
&+ \sum_{t=1}^{T} \left\langle  \nabla f_t(\m{x}_t,\m{y}^*_t(\m{x}_t))- \tilde{\nabla} f_t(\m{x}_t,  \m{y}_{t+1}), \m{x}_t-\m{x}^* \right\rangle
\label{eqn:static:convex:reg13}\\
&+ \sum_{t=1}^{T} \alpha_t\norm{ \nabla f_t(\m{x}_t,\m{y}^*_t(\m{x}_t)) }^2.
\label{eqn:static:convex:reg14s}
\end{align}
\end{subequations}
Next, we upper bound each term of  \eqref{eqn:static:convex:reg1}. 

From Assumption \ref{assu:dom}, we have
\begin{subequations}
\begin{align}\label{eqn:static:convex:reg2}
\nonumber  \eqref{eqn:static:convex:reg11}
 & =\sum_{t=1}^{T} \left(\frac{1}{2\alpha_t}\|\m{x}_t - \m{x}^*\|^2 - \frac{1}{2\alpha_{t+1}}\|\m{x}_{t+1} -\m{x}^*\|^2\right)\\\nonumber
& +\sum_{t=1}^{T} \left( \frac{1}{2\alpha_{t+1}}\|\m{x}_{t+1} -\m{x}^*\|^2 - \frac{1}{2\alpha_t}\|\m{x}_{t+1} -\m{x}^*\|^2\right)
\\\nonumber &\leq \frac{D^2 }{2\alpha_1}-\frac{\|\m{x}_{T+1} - \m{x}^*\|^2 }{2\alpha_{T+1}}+D^2\sum_{t=1}^{T}\left(\frac{1}{2\alpha_{t+1}}-\frac{1}{2\alpha_{t}} \right)
\\&\leq \frac{D^2}{2\alpha_{T+1}}.
\end{align}

Using Lemma~\ref{lem:lips} and Assumption~\ref{assu:dom}, and following similar steps as in the derivation of \eqref{eqn:convex:reg3}, we obtain
\begin{align}
\eqref{eqn:static:convex:reg12} + \eqref{eqn:static:convex:reg13} \nonumber &\leq    D M_f \sum_{t=1}^{T}\norm{\m{y}_{t+1} -\m{y}^*_t(\m{x}_t)} + M_f^2 \sum_{t=1}^{T}\alpha_t\norm{\m{y}_{t+1} -\m{y}^*_t(\m{x}_t)}^2
\\&\leq    D M_f \sum_{t=1}^{T}\norm{\m{y}_{t+1} -\m{y}^*_t(\m{x}_t)} + M_f^2 \alpha_1\sum_{t=1}^{T}\norm{\m{y}_{t+1} -\m{y}^*_t(\m{x}_t)}^2.
\end{align}
Further, it follows from Assumption \ref{assu:f:a1} that
\begin{align}\label{sfs}
\eqref{eqn:static:convex:reg14s} =
\sum_{t=1}^{T} \alpha_{t}  \|\nabla f_{t}(\m{x}_{t},\m{y}^*_{t}(\m{x}_{t}))\|^2 \leq
 \ell_{f,0}^2\sum_{t=1}^{T} \alpha_{t}.
\end{align}
\end{subequations}
Substituting \eqref{eqn:static:convex:reg2}--\eqref{sfs} into \eqref{eqn:static:convex:reg1} gives
\begin{equation}
\begin{aligned}\label{eqn:static:convex:reg3}
& ~~ \sum_{t=1}^T\left( f_t(\m{x}_t,\m{y}^*_t(\m{x}_t)) - f_t(\m{x}^*,\m{y}^*_t(\m{x}^*)) \right)\\
&\leq    \frac{ D^2}{2\alpha_{T+1}}+  \sum_{t=1}^T \left(  M_f^2\alpha_1  \norm{\m{y}_{t+1} -\m{y}^*_t(\m{x}_t)}^2 + D M_f  \norm{\m{y}_{t+1} -\m{y}^*_t(\m{x}_t)}+\ell_{f,0}^2 \alpha_t \right),
\end{aligned}
\end{equation}
By Lemma~\ref{eq:lower:dynamic}--\ref{item:leml1},  \eqref{rr} and Assumption \ref{assu:dom}, we have
\begin{subequations}
\begin{equation}
\begin{aligned}\label{nop1}
\sum_{t=1}^T &\norm{\m{y}_{t+1} -\m{y}^*_t(\m{x}_t)}^2  \leq   \frac{\rho^2_1}{1-2\rho^2_1}  \|\m{y}_{1} - \m{y}_{1}^*(\m{x}_1)\|^2\\
& +  \frac{6}{1-2\rho^2_1}  \left( 2L_{\m{y}}^2\sum_{t=1}^{T}{\rho}_{t}^2\|\m{x}_{t}-\m{x}^*\|^2 +  \sum_{t=2}^T{\rho}_t^2 \|\m{y}^*_{t-1}(\m{x}^*)-\m{y}^*_{t}(\m{x}^*)\|^2 \right)\\
& \leq  \frac{1}{2} {D'}^2 + \frac{\pi^4}{30}L_{\m{y}}^2  D^2+  \frac{\pi^4}{60}L_{\m{y}}^2 \bar{Y}_{2,T},
\end{aligned}
\end{equation}
where $\bar{Y}_{2,T}=  \sum_{t=2}^{T}\norm{\m{y}_{t-1}^*(\m{x}^*) - \m{y}_{t}^*(\m{x}^*)}^2$.

Similarly, we obtain
\begin{equation}\label{eqn:regbc:33}
\begin{aligned}
\sum_{t=1}^T& \norm{\m{y}_{t+1} -\m{y}^*_t(\m{x}_t)} \leq   \frac{\rho_1}{1-\rho_1} \|\m{y}_{1} - \m{y}_{1}^*(\m{x}_1)\|\\
& +  \frac{1}{1-\rho_1}  \left(2 L_{\m{y}}\sum_{t=1}^{T}\rho_{t}\|\m{x}_{t}-\m{x}^*\| +  \sum_{t=2}^T\rho_t \|\m{y}^*_{t-1}(\m{x}^*)-\m{y}^*_{t}(\m{x}^*)\| \right) \\
&\leq D'+  \pi^2L_{\m{y}} D+  \frac{\pi^2}{2}L_{\m{y}} \bar{Y}_{1,T},
\end{aligned}
\end{equation}
\end{subequations}
where $\bar{Y}_{1,T}=  \sum_{t=2}^{T}\norm{\m{y}_{t-1}^*(\m{x}^*) - \m{y}_{t}^*(\m{x}^*)}$.

Let
$$\dot{E}_1 (\alpha_1):= \alpha_1 M_f^2\left( \frac{1}{2}{D'}^2+ \frac{\pi^4}{30} L_{\m{y}}^2  D^2\right)  + D M_f \left( D'+  \pi^2L_{\m{y}} D\right).$$
By substituting  \eqref{nop1} and \eqref{eqn:regbc:33} into \eqref{eqn:static:convex:reg3} and using our choice of the stepsize $\alpha_t = D/(\ell_{f,0} \sqrt{t})$, we obtain
\begin{equation}\label{eqn:static:convex:finreg}
\begin{aligned}
& ~~ \sum_{t=1}^T\left( f_t(\m{x}_t,\m{y}^*_t(\m{x}_t)) - f_t(\m{x}^*,\m{y}^*_t(\m{x}^*)) \right)\\
& \leq  \frac{1}{2} D \ell_{f,0} \sqrt{T} +\dot{E}_1 (\alpha_1) +\alpha_1 M_f^2 \frac{\pi^4}{60}L_{\m{y}}^2 \bar{Y}_{2,T} + D M_f \frac{\pi^2}{2}L_{\m{y}} \bar{Y}_{1,T}+D\ell_{f,0}\sum_{t=1}^T\frac{1}{\sqrt{t}},
\end{aligned}
\end{equation}
Let
\begin{equation*}
\begin{aligned}
\dot{e}_1 &:= \frac{3}{2} D \ell_{f,0},~~\dot{e}_2:=  D M_f \frac{\pi^2}{2}L_{\m{y}},~~~\dot{e}_3:= \alpha_1 M_f^2  \frac{\pi^4}{60}L_{\m{y}}^2.
\end{aligned}
\end{equation*}
From \eqref{eqn:static:convex:finreg} and using Lemma~\ref{lem:rimann}--\ref{itm:lem:b:rimann}, we get
\begin{equation*}
\begin{split}
\textnormal{BS-Reg}_T \leq \dot{e}_1\sqrt{T}+\dot{E}_1 (\alpha_1)+  \dot{e}_3 \bar{Y}_{2,T} +  \dot{e}_2 \bar{Y}_{1,T}.
\end{split}
\end{equation*}
This completes the proof of the theorem and gives \eqref{eqn:convex:sbound}.
\end{proof}
\subsubsection{Discussion on the number of inner iterations and the window size}
As mentioned before, by using inner gradient descent multiple times, we are able to get more information
from each inner function and obtain a tight bound for the dynamic regret in terms of $Y_{p,T}$. However, according to our analysis in Theorems~\ref{thm:dynamic:strong:B-OGD} and~\ref{thm:dynamic:convex:B-OGD}, even for sufficiently large $K_t$ and $w>1$, the dynamic regret bound can only be improved by a constant factor. 
A related question is whether we can reduce the value of $K_t$ by using, for example, the smoothness of $\nabla \m{y}_{t}(\m{x})$, similar to offline bilevel optimization~\citep{chen2021closing}, or by adopting more advanced optimization techniques, such as acceleration or momentum-type gradient methods for both inner and outer updates~\citep{nesterov2003introductory}. These are open problems for us and will be investigated as future work.


\subsection{Proof for Non-convex OBO with Partial Information}\label{sec:app:nonconv}
This section gives regret bounds for OBO in the non-convex setting.
\subsubsection{Auxiliary Lemmas}
\begin{lem}\label{lem:non:lip}
Under Assumption~\ref{assu:f}, for all $t \in [T]$ and $\m{x} \in\mathbb{R}^d$, we have
\begin{equation}
\begin{aligned}
 \left\| \tilde{\nabla} F_{t,\m{u}}(\m{x},\m{y}_{t+1}) -\nabla F_{t,\m{u}}(\m{x},\m{y}^*_{t}(\m{x}))\right\|^2
\leq M_f^2  \left\|\m{y}_{t+1}- \m{y}^*_{t}(\m{x})\right\|^2,
\end{aligned}
\end{equation}
where $\tilde{\nabla} F_{t,\m{u}}$ is  defined in \eqref{eqn:tave:grad} and  $M_f$ is given in Lemma \ref{lem:lips}.
\end{lem}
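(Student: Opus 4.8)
The plan is to reduce the averaged statement to the single-round estimate \eqref{eqn:lip:cons} already established in Lemma~\ref{lem:lips}. First I would unfold both objects using Definition~\ref{def:avg:grad}. By \eqref{eqn:tave:grad} the surrogate is the weighted average
\[
\tilde{\nabla} F_{t,\m{u}}(\m{x},\m{y}_{t+1}) = \frac{1}{W}\sum_{i=0}^{w-1} u_i\, \tilde{\nabla} f_{t-i}(\m{x},\m{y}_{t+1}),
\]
and, reading $\nabla F_{t,\m{u}}(\m{x},\m{y}^*_t(\m{x}))$ as the corresponding exact averaged hypergradient (with the same $1/W$ normalization) evaluated at the inner optimum, $\nabla F_{t,\m{u}}(\m{x},\m{y}^*_t(\m{x})) = \frac{1}{W}\sum_{i=0}^{w-1} u_i\, \nabla f_{t-i}(\m{x},\m{y}^*_t(\m{x}))$. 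Subtracting term by term, the difference becomes a single weighted average of the per-round residuals $\tilde{\nabla} f_{t-i}(\m{x},\m{y}_{t+1}) - \nabla f_{t-i}(\m{x},\m{y}^*_t(\m{x}))$.

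Next I would exploit that the weights $u_i/W$ form a probability vector, since $W=\sum_{i=0}^{w-1} u_i$. Applying Jensen's inequality (convexity of $\norm{\cdot}^2$) pushes the norm inside the average:
\[
\norm{\tilde{\nabla} F_{t,\m{u}}(\m{x},\m{y}_{t+1}) -\nabla F_{t,\m{u}}(\m{x},\m{y}^*_{t}(\m{x}))}^2 \leq \frac{1}{W}\sum_{i=0}^{w-1} u_i\, \norm{\tilde{\nabla} f_{t-i}(\m{x},\m{y}_{t+1}) - \nabla f_{t-i}(\m{x},\m{y}^*_t(\m{x}))}^2.
\]
To each summand I would apply the single-round Lipschitz bound \eqref{eqn:lip:cons}, which gives $\norm{\tilde{\nabla} f_{t-i}(\m{x},\m{y}_{t+1}) - \nabla f_{t-i}(\m{x},\m{y}^*_t(\m{x}))}^2 \leq M_f^2 \norm{\m{y}_{t+1} - \m{y}^*_t(\m{x})}^2$. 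Since this right-hand side is independent of the summation index $i$, it factors out, and the identity $\frac{1}{W}\sum_{i=0}^{w-1} u_i = 1$ collapses the average, leaving exactly $M_f^2\norm{\m{y}_{t+1}-\m{y}^*_t(\m{x})}^2$.

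The main obstacle is the bookkeeping of the linearization point: \eqref{eqn:lip:cons} in Lemma~\ref{lem:lips} is stated for a single outer function and its own inner optimum, whereas here each window term $f_{t-i}$ must be compared against the \emph{common} inner optimum $\m{y}^*_t(\m{x})$ (and the common surrogate Jacobian $\m{M}_t$ of Definition~\ref{def:avg:grad}), consistent with the definition of $F_{t,\m{u}}(\m{x},\m{y}^*_t(\m{x}))$ used in the local-regret section. I would therefore verify that the derivation behind \eqref{eqn:lip:cons} — which invokes only the uniform constants of Assumption~\ref{assu:f} — applies verbatim to each $f_{t-i}$ with the \emph{same} constant $M_f$, so that no term-dependent constant appears. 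Once this uniformity is confirmed, the Jensen-plus-per-term argument above closes the proof, and the remaining manipulations are routine.
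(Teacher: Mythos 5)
Your proposal is correct and matches the paper's proof in substance: the paper also reduces the claim to the per-round bound \eqref{eqn:lip:cons} applied to each $f_{t-i}$ at the common point $(\m{x},\m{y}^*_t(\m{x}))$, and its hand-expansion of the squared weighted sum via $\langle a,b\rangle\leq \tfrac{1}{2}(\|a\|^2+\|b\|^2)$ followed by $\tfrac{1}{W}\sum_i u_i=1$ is exactly the Jensen step you invoke. The uniformity concern you flag (same $M_f$ for every window term, common $\m{M}_t$) is handled implicitly in the paper in the same way, since the constants in Lemma~\ref{lem:lips} depend only on the uniform parameters of Assumption~\ref{assu:f}.
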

\begin{proof}
From  \eqref{eqn:tave:grad}, we get
\begin{align*}
&\quad \norm{ \tilde{\nabla} F_{t,\m{u}}(\m{x},\m{y}_{t+1}) -\nabla F_{t,\m{u}}(\m{x},\m{y}^*_{t}(\m{x}))}^2\\
&= \norm{ \frac{1}{W} \sum_{i=0}^{w-1} u_i \left (\tilde{\nabla} f_{t-i}(\m{x}, \m{y}_{t+1})- \nabla f_{t-i}(\m{x}, \m{y}^*_{t}(\m{x}))\right) }^2\\
 &\leq \frac{1}{2W^2} \sum_{i=0}^{w-1} \sum_{j=0}^{w-1} u_i u_j
 \norm{\tilde{\nabla} f_{t-i}(\m{x}, \m{y}_{t+1})- \nabla f_{t-i}(\m{x}, \m{y}^*_{t}(\m{x})) }^2
\\
&
+ \frac{1}{2W^2}\sum_{i=0}^{w-1} \sum_{j=0}^{w-1} u_i u_j  \left\|\tilde{\nabla} f_{t-j}(\m{x}, \m{y}_{t+1})- \nabla f_{t-j}(\m{x}, \m{y}^*_{t}(\m{x})) \right\|^2
\\
& =\frac{1}{W^2} \sum_{j=0}^{w-1} u_j \sum_{i=0}^{w-1}u_i
\left\|\tilde{\nabla} f_{t-i}(\m{x}, \m{y}_{t+1})- \nabla f_{t-i}(\m{x}, \m{y}^*_{t}(\m{x})) \right\|^2
\\
& \leq \frac{M_f^2}{W^2} \sum_{j=0}^{w-1} u_j \sum_{i=0}^{w-1}  u_i\left\|\m{y}_{t+1}- \m{y}^*_{t}(\m{x})\right\|^2\\
& = M_f^2\left\|\m{y}_{t+1}- \m{y}^*_{t}(\m{x})\right\|^2.
\end{align*}
Here, the first inequality uses Lemma \ref{lem:trig} with $c=1$;  the second inequality uses  Lemma \ref{lem:lips}; and the last equality follows since $(1/W)\sum_{i=0}^{w-1} u_i=1$.
\end{proof}

Similar to Lemma~\ref{eq:lower:dynamic}, the following lemma characterizes the inner estimation error  $\|\m{y}_{t+1} - \m{y}^*_t(\m{x}_t) \|$, where $ \m{y}_{t+1} $ is the inner variable update via Algorithm~\ref{alg:obgd}. In particular, it shows that by applying inner gradient descent at each round $t$, we are able to obtain an error bound in terms of the local regret $\|\nabla F_{t,\m{u}}(\m{x}_{t},\m{y}^*_{t}(\m{x}_{t}))\|^2$ and the inner solution variation $H_{2,T}=\sum_{t=2}^{T} \sup_{\m{x}\in \mb{R}^{d_1}} \| \m{y}^*_{t-1}(\m{x}) - \m{y}^*_{t}(\m{x})\|^2$.

\begin{lem}\label{lm:nn}
Suppose Assumption~\ref{assu:f} holds. If we choose the stepsizes as
\begin{align*}
\beta_t=\beta =\frac{2}{\ell_{g,1}+\mu_g},~~~\textnormal{and}~~~ \alpha_t=\alpha\leq \frac{1}{2\sqrt{2}L_{\m{y}} M_f(\kappa_g^2-1  )^{1/2}},   
\end{align*}
for all $t \in [T]$,
then the sequence $\{(\m{x}_t,\m{y}_{t})\}_{t=1}^T$ generated by Algorithm~\ref{alg:obgd} satisfy
\begin{equation}
\begin{aligned}\label{eqn:yt1-ystar}
 \sum_{t=1}^T  \|  \m{y}_{t+1} -  \m{y}^*_t(\m{x}_t) \|^2 
 & \leq  \frac{(\kappa_g-1)^2}{2(\kappa_g+1)}\|\m{y}_{1}-\m{y}^*_{1}(\m{x}_{1})\|^2
 \\&  +\frac{1}{2M_f^2}(\frac{\kappa_g-1}{\kappa_g+1})\sum_{t=1}^{T}\left\|\nabla F_{t,\m{u}}(\m{x}_{t},\m{y}^*_{t}(\m{x}_{t}))\right\|^2+2(\kappa_g-1  )^2H_{2,T} 
.  
\end{aligned}
\end{equation}
Here, $H_{2,T}=\sum_{t=2}^{T} \sup_{\m{x}\in \mb{R}^{d_1}} \| \m{y}^*_{t-1}(\m{x}) - \m{y}^*_{t}(\m{x})\|^2$.
\end{lem}
\begin{proof}
Since $\beta =2/(\ell_{g,1}+\mu_g)$, from Lemma \ref{lm:nes},  we have
\begin{align*}
 \|\m{y}_{t+1} - \m{y}_{t}^*(\m{x}_{t})\|^2&
\leq \left( 1 -  \frac{2}{\kappa_g+1}  \right)^2\left\|\m{y}_{t}-\m{y}_{t}^*(\m{x}_{t})\right\|^2,
\end{align*}
which implies that 
  \begin{align}\label{NM}
\nonumber
 \sum_{t=1}^{T}\|\m{y}_{t+1}-\m{y}^*_{t}(\m{x}_{t})\|^2
& \leq  \left( 1 -  \frac{2}{\kappa_g+1}  \right)^2\|\m{y}_{1}-\m{y}^*_{1}(\m{x}_{1})\|^2\\
&+\left( 1 -  \frac{2}{\kappa_g+1}  \right)^2 \sum_{t=2}^{T}\|\m{y}_{t}-\m{y}^*_{t}(\m{x}_{t})\|^2.
  \end{align}
From Lemma \ref{lem:trig}, we have
 \begin{align}\label{MLm}
 \nonumber \sum_{t=2}^{T}\|\m{y}_{t}-\m{y}^*_{t}(\m{x}_{t})\|^2 &\leq 
  \left( 1 +  \frac{1}{\kappa_g+1}  \right)\sum_{t=2}^{T}\|\m{y}_{t}-\m{y}^*_{t-1}(\m{x}_{t-1})\|^2\\
\nonumber &+\left(1+ \kappa_g+1\right)\sum_{t=2}^{T}\|\m{y}^*_{t}(\m{x}_{t})- \m{y}^*_{t-1}(\m{x}_{t-1}) \|^2
\\ \nonumber&\leq 
  \left( 1 +  \frac{1}{\kappa_g+1}  \right)\sum_{t=2}^{T}\|\m{y}_{t}-\m{y}^*_{t-1}(\m{x}_{t-1})\|^2\\
\nonumber &+2(2+ \kappa_g)\sum_{t=2}^{T}\|\m{y}^*_{t}(\m{x}_{t})- \m{y}^*_{t}(\m{x}_{t-1}) \|^2
\\\nonumber &+2(2+ \kappa_g)\sum_{t=2}^{T}\|\m{y}^*_{t}(\m{x}_{t-1})- \m{y}^*_{t-1}(\m{x}_{t-1}) \|^2
\\ &\leq 
  \left( 1 +  \frac{1}{\kappa_g+1}  \right)\sum_{t=1}^{T}\|\m{y}_{t+1}-\m{y}^*_{t}(\m{x}_{t})\|^2 \nonumber\\
&+2(2+ \kappa_g)\sum_{t=1}^{T} \|\m{y}^*_{t+1}(\m{x}_{t+1})- \m{y}^*_{t+1}(\m{x}_{t}) \|^2
+2(2+ \kappa_g)H_{2,T}.
 \end{align}
From Lemma \ref{lem:lips} and the update rule of $\m{x}_t$, we obtain 
\begin{align*}
\nonumber
\|\m{y}^*_{t+1}(\m{x}_{t+1})- \m{y}^*_{t+1}(\m{x}_{t}) \|^2
 &\leq  L_{\m{y}}^2 \|\m{x}_{t} -\m{x}_{t+1}\|^2 \\
 \nonumber
 &=L_{\m{y}}^2  \alpha^2  \left\|
\tilde{\nabla} F_{t,\m{u}}(\m{x}_{t},\m{y}_{t+1})\right\|^2
\\
\nonumber
& \leq  2  L_{\m{y}}^2  \alpha^2   \left\|\nabla F_{t,\m{u}}(\m{x}_{t},\m{y}^*_{t}(\m{x}_{t}))\right\|^2\\\nonumber
& + 2 L_{\m{y}}^2  \alpha^2 \left\| \tilde{\nabla} F_{t,\m{u}}(\m{x}_{t},\m{y}_{t+1}) - \nabla F_{t,\m{u}}(\m{x}_{t},\m{y}^*_{t}(\m{x}_{t}))\right\|^2 \\
& \leq  2  L_{\m{y}}^2   \alpha^2  \left(  \left\|\nabla F_{t,\m{u}}(\m{x}_{t},\m{y}^*_{t}(\m{x}_{t}))\right\|^2 +   M_f^2\left\|\m{y}_{t+1}- \m{y}^*_{t}(\m{x}_{t})\right\|^2 \right),
\end{align*}
where the second inequality holds due to Lemma \ref{lem:trig} and the last  inequality follows from Lemma~\ref{lem:non:lip}.

Now, substituting the above bound into  \eqref{MLm}, we obtain
\begin{align}\label{eew}
 \nonumber   \sum_{t=2}^{T}\|\m{y}_{t}-\m{y}^*_{t}(\m{x}_{t})\|^2 &\leq 
  \left( 1 +  \frac{1}{\kappa_g+1}+4(2+ \kappa_g)  L_{\m{y}}^2   \alpha^2M_f^2  \right)\sum_{t=1}^{T}\|\m{y}_{t+1}-\m{y}^*_{t}(\m{x}_{t})\|^2
\\&+4\left(2+ \kappa_g\right)  L_{\m{y}}^2   \alpha^2\sum_{t=1}^{T}\left\|\nabla F_{t,\m{u}}(\m{x}_{t},\m{y}^*_{t}(\m{x}_{t}))\right\|^2
+2\left(2+ \kappa_g\right)H_{2,T}.
\end{align}
Substituting \eqref{eew} into \eqref{NM}, we get
\begin{align}\label{gfr}
\nonumber \sum_{t=1}^T  \|  \m{y}_{t+1} -  \m{y}^*_t(\m{x}_t) \|^2 
  &\leq  \left( 1 -  \frac{2}{\kappa_g+1}  \right)^2\|\m{y}_{1}-\m{y}^*_{1}(\m{x}_{1})\|^2
  +A (\alpha)\sum_{t=1}^{T}\left\|\m{y}_{t+1}-\m{y}_{t}^*(\m{x}_{t})\right\|^2
\\&+B(\alpha)\sum_{t=1}^{T}\left\|\nabla F_{t,\m{u}}(\m{x}_{t},\m{y}^*_{t}(\m{x}_{t}))\right\|^2  
+2\left( 2 +  \kappa_g  \right)\left( 1 -  \frac{2}{\kappa_g+1}  \right)^2H_{2,T},
  \end{align}
where 
\begin{align*}
 A (\alpha)&:=  \left( 1 +  \frac{1}{\kappa_g+1}  \right)\left( 1 -  \frac{2}{\kappa_g+1}  \right)^2+4\left( 2 +  \kappa_g  \right)\left( 1 -  \frac{2}{\kappa_g+1}  \right)^2 L_{\m{y}}^2  \alpha^2 M_f^2,\quad \textnormal{and}
\\ 
B(\alpha)&:=4\left( 2 +  \kappa_g  \right)\left( 1 -  \frac{2}{\kappa_g+1}  \right)^2  L_{\m{y}}^2   \alpha^2. 
\end{align*}
We now proceed to bound terms $A(\alpha)$ and $B(\alpha)$, respectively. Let’s bound term $A(\alpha)$ first as
\begin{align*}
  A (\alpha)&=  
    \left( 1 -  \frac{2}{\kappa_g+1}  \right)\left( ( 1 +  \frac{1}{\kappa_g+1}  )( 1 -  \frac{2}{\kappa_g+1}  ) +4( 2 +  \kappa_g  )( 1 -  \frac{2}{\kappa_g+1}  ) L_{\m{y}}^2  \alpha^2 M_f^2  \right)
    \\&\leq
    \left( 1 -  \frac{2}{\kappa_g+1}  \right)\left(  1 -  \frac{1}{\kappa_g+1}  +8( 1 +  \kappa_g  )( 1 -  \frac{2}{\kappa_g+1}  ) L_{\m{y}}^2  \alpha^2 M_f^2  \right)
    \\&=
    \left( 1 -  \frac{2}{\kappa_g+1}  \right)\left(  1 -  \frac{1}{\kappa_g+1}  +8(\kappa_g-1) L_{\m{y}}^2  \alpha^2 M_f^2  \right)
    \\&\leq \left( 1 -  \frac{2}{\kappa_g+1}  \right),
\end{align*}
where the first inequality is by the inequality $(1+a/2)(1-a)\leq (1-a/2-a^2/2)\leq 1-a/2$ and the second inequality is due to the assumption that $\alpha^2\leq \frac{1}{8(\kappa_g^2-1  )L_{\m{y}}^2 M_f^2}$. 

Next, we bound $B(\alpha)$ as follows
\begin{align*}
B(\alpha)\leq 8( 1 +  \kappa_g  ) \left( 1 -  \frac{2}{\kappa_g+1}  \right)^2  L_{\m{y}}^2   \alpha^2=\frac{8(\kappa_g-1)^2}{\kappa_g+1} L_{\m{y}}^2   \alpha^2\leq \frac{\kappa_g-1}{(\kappa_g+1)^2M_f^2},
\end{align*}
where the last inequality holds because $\alpha^2\leq \frac{1}{8(\kappa_g^2-1  )L_{\m{y}}^2 M_f^2}$.

Inserting the above two bounds for $A(\alpha)$ and $B(\alpha)$ into  \eqref{gfr} gives
\begin{align*}
 \sum_{t=1}^T  \|  \m{y}_{t+1} -  \m{y}^*_t(\m{x}_t) \|^2 
 & \leq  \left(\frac{\kappa_g-1}{\kappa_g+1}\right)^2\|\m{y}_{1}-\m{y}^*_{1}(\m{x}_{1})\|^2
 +\left(1- \frac{2}{\kappa_g+1}\right)\sum_{t=1}^{T}\norm{\m{y}_{t+1}-\m{y}_{t}^*(\m{x}_{t})}^2
 \\&  +\frac{\kappa_g-1}{(\kappa_g+1)^2M_f^2}\sum_{t=1}^{T}\left\|\nabla F_{t,\m{u}}(\m{x}_{t},\m{y}^*_{t}(\m{x}_{t}))\right\|^2+\frac{4(\kappa_g-1  )^2}{\kappa_g+1  }H_{2,T} 
.  
\end{align*}
Rearranging the above terms gives \eqref{eqn:yt1-ystar} and completes the proof.
\end{proof}

The following lemma shows that the difference between the time-averaged function $F_{t,\m{u}}$ computed at  $(\m{x}_t,\m{y}^*_t(\m{x}_{t}))$ and $ (\m{x}_{t+1},\m{y}^*_{t}(\m{x}_{t+1})) $ is bounded. This extends the single-level setting to the generic weight sequence  $\{u_{i}\}_{i=0}^{w-1}$, and the proof utilizes the ideas from \cite[Lemmas 3.2,  3.3]{aydore2019dynamic} and \cite[
Theorem 3]{hazan2017efficient}.
\begin{lem}\label{lem:capdiff}
Let $\{(f_t,g_t)\}_{t=1}^T$ be the sequence of functions presented to Algorithm~\ref{alg:obgd}, satisfying Assumptions \ref{assu:f:a1} and \ref{assu:f:b}. Then, we have
\begin{equation}\label{eqn:diff:Ftu}
 \sum_{t=1}^{T} \left(F_{t,\m{u}}(\m{x}_t,\m{y}^*_t(\m{x}_{t})) - F_{t,\m{u}}(\m{x}_{t+1},\m{y}^*_{t}(\m{x}_{t+1}))\right)\leq \frac{2T M }{W} +2M
+ \ell_{f,0}  H_{1,T}.
\end{equation}
Here, $ H_{1,T} =  \sum_{t=2}^{T} \sup_{\m{x}\in \mb{R}^{d_1}} \| \m{y}^*_{t}(\m{x}) - \m{y}^*_{t-1}(\m{x})\|$ and $M$ is defined in Assumption~\ref{assu:f:b}.
\end{lem}
\begin{proof}
Observe that
\begin{subequations}\label{eqn:fu:diff}
\begin{align}
\nonumber &\quad \sum_{t=1}^{T} \left( F_{t,\m{u}}(\m{x}_t,\m{y}^*_t(\m{x}_{t})) - F_{t,\m{u}}(\m{x}_{t+1},\m{y}^*_{t}(\m{x}_{t+1}))\right)
\\\label{eqn:fu:diff1} &= \sum_{t=1}^{T} \left( F_{t,\m{u}}(\m{x}_t,\m{y}^*_t(\m{x}_{t})) - F_{t,\m{u}}(\m{x}_{t+1},\m{y}^*_{t+1}(\m{x}_{t+1}))\right)
 \\\label{eqn:fu:diff2}&+\sum_{t=1}^{T} \left( F_{t,\m{u}}(\m{x}_{t+1},\m{y}^*_{t+1}(\m{x}_{t+1}))- F_{t,\m{u}}(\m{x}_{t+1},\m{y}^*_{t}(\m{x}_{t+1}))\right).
\end{align}
\end{subequations}
In the following, we bound the terms \eqref{eqn:fu:diff1} and \eqref{eqn:fu:diff2} separately.

For \eqref{eqn:fu:diff2}, we have
\begin{align}\label{sdf}
\nonumber \eqref{eqn:fu:diff2} &=\sum_{t=1}^{T} \frac{1}{W} \sum_{i=0}^{w-1} u_i\left( f_{t-i}(\m{x}_{t+1},\m{y}^*_{t+1}(\m{x}_{t+1})) - f_{t-i}(\m{x}_{t+1},\m{y}^*_{t}(\m{x}_{t+1}))\right)
\\\nonumber&\leq \frac{\ell_{f,0}}{W}\sum_{i=0}^{w-1} u_i \sum_{t=1}^{T}\|\m{y}^*_{t+1}(\m{x}_{t+1})-\m{y}^*_{t}(\m{x}_{t+1}) \|
\\\nonumber &= \ell_{f,0} \sum_{t=1}^{T}\|\m{y}^*_{t+1}(\m{x}_{t+1})-\m{y}^*_{t}(\m{x}_{t+1}) \|
\\&\leq  \ell_{f,0} H_{1,T},
\end{align}
where the first inequality is due to Assumption \ref{assu:f:a1} and the second equality follows since $(1/W)\sum_{i=0}^{w-1} u_i=1$.

For the term \eqref{eqn:fu:diff1}, we have
\begin{align*}
\nonumber  \eqref{eqn:fu:diff1}&=\sum_{t=2}^{T} \Big( F_{t,\m{u}}(\m{x}_t,\m{y}^*_{t}(\m{x}_{t})) - F_{t-1,\m{u}}(\m{x}_{t},\m{y}^*_{t}(\m{x}_{t}))\Big)
\\\nonumber & +F_{1,\m{u}}(\m{x}_1,\m{y}^*_1(\m{x}_{1}))-F_{T,\m{u}}(\m{x}_{T+1},\m{y}^*_{T+1}(\m{x}_{T+1}))
\\\nonumber &=\sum_{t=2}^{T} \frac{1}{W}\sum_{i=0}^{w-1} u_i\left( f_{t-i}(\m{x}_{t},\m{y}^*_{t}(\m{x}_{t})) - f_{t-1-i}(\m{x}_{t},\m{y}^*_{t}(\m{x}_{t}))\right)
\\ & +f_{1}(\m{x}_1,\m{y}^*_1(\m{x}_{1}))-\frac{1}{W}\sum_{i=0}^{w-1} u_i  f_{T-i}(\m{x}_{T+1},\m{y}^*_{T+1}(\m{x}_{T+1}))
.
\end{align*}
Since $\{u_{i}\}_{i=0}^{w-1}$ is the weight sequence with $ 1=u_{0}\geq u_1 \ldots u_{w-1} >0$, given in Definition~\ref{def:avg:grad}, we have
\begin{align*}
 &\quad\sum_{i=0}^{w-1} u_i\left( f_{t-i}(\m{x}_{t},\m{y}^*_{t}(\m{x}_{t})) - f_{t-1-i}(\m{x}_{t},\m{y}^*_{t}(\m{x}_{t}))\right)
\\ &=u_0f_{t}(\m{x}_{t},\m{y}^*_{t}(\m{x}_{t})) +u_1 f_{t-1}(\m{x}_{t},\m{y}^*_{t}(\m{x}_{t}))+\cdots+u_{w-1} f_{t-w+1}(\m{x}_{t},\m{y}^*_{t}(\m{x}_{t}))
 \\&-u_0f_{t-1}(\m{x}_{t},\m{y}^*_{t}(\m{x}_{t})) -u_1 f_{t-2}(\m{x}_{t},\m{y}^*_{t}(\m{x}_{t}))-\cdots-u_{w-1}f_{t-w}(\m{x}_{t},\m{y}^*_{t}(\m{x}_{t}))
 \\&= u_0 f_{t}(\m{x}_{t},\m{y}^*_{t}(\m{x}_{t})) -u_{w-1} f_{t-w}(\m{x}_{t},\m{y}^*_{t}(\m{x}_{t}))
+\sum_{i=1}^{w-1} \left(u_i-u_{i-1}\right) f_{t-i}(\m{x}_{t},\m{y}^*_{t}(\m{x}_{t}))
\\&\leq u_0 f_{t}(\m{x}_{t},\m{y}^*_{t}(\m{x}_{t})) -u_{w-1} f_{t-w}(\m{x}_{t},\m{y}^*_{t}(\m{x}_{t}))
+\sum_{i=1}^{w-1} \left(u_{i-1}-u_i\right) |f_{t-i}(\m{x}_{t},\m{y}^*_{t}(\m{x}_{t}))|
\\&\leq u_0 f_{t}(\m{x}_{t},\m{y}^*_{t}(\m{x}_{t})) -u_{w-1} f_{t-w}(\m{x}_{t},\m{y}^*_{t}(\m{x}_{t}))
+\max_i |f_{t-i}(\m{x}_{t},\m{y}^*_{t}(\m{x}_{t}))|(u_0 -u_{w-1} )
,
\end{align*}
which implies that
\begin{align}\label{nn}
\nonumber  \eqref{eqn:fu:diff1}&\leq \sum_{t=2}^{T} \frac{1}{W} \left(u_0 f_{t}(\m{x}_{t},\m{y}^*_{t}(\m{x}_{t})) -u_{w-1} f_{t-w}(\m{x}_{t},\m{y}^*_{t}(\m{x}_{t}))
+\max_i |f_{t-i}(\m{x}_{t},\m{y}^*_{t}(\m{x}_{t}))|(u_0 -u_{w-1} )\right)
\\\nonumber & + f_{1}(\m{x}_1,\m{y}^*_1(\m{x}_{1}))-\frac{1}{W}\sum_{i=0}^{w-1} u_i f_{T-i}(\m{x}_{T+1},\m{y}^*_{T+1}(\m{x}_{T+1}))
\\\nonumber&\leq \frac{2TM \left(u_0 - u_{w-1} \right)}{W}
 +M+M
\\&\leq \frac{2TM }{W} +M+M,
\end{align}
where the second inequality is by
Assumption~\ref{assu:f:b}.

Combining \eqref{sdf} with \eqref{nn}, we get \eqref{eqn:diff:Ftu}.
\end{proof}
\subsubsection{Proof of Theorem~\ref{thm:dynamic:nonc}}\label{sec:app:reg:nonconvex}
\begin{proof}
Lemma~\ref{lem:lips} implies that \eqref{eqn:lip:cons3} still holds by replacing $f_t$ with $F_t$. Hence,
\begin{align}\label{eqn:prthm:non1}
\nonumber&\quad F_{t, \m{u}}(\m{x}_{t+1}, \m{y}^*_{t}(\m{x}_{t+1})) - F_{t, \m{u}}(\m{x}_t,\m{y}^*_t(\m{x}_{t}))\\\nonumber
&\leq \left\langle \nabla F_{t, \m{u}}(\m{x}_t, \m{y}^*_t(\m{x}_{t})), \m{x}_{t+1} - \m{x}_t \right\rangle+ \frac{L_f}{2} \| \m{x}_{t+1} - \m{x}_t \|^2   \\
 & \leq -\alpha \left\langle \nabla F_{t, \m{u}}(\m{x}_t, \m{y}^*_t(\m{x}_{t})),   \tilde{\nabla} F_{t, \m{u}}(\m{x}_t,\m{y}_{t+1})  \right\rangle + \frac{L_f \alpha^2 }{2} \|\tilde{\nabla} F_{t, \m{u}}(\m{x}_t,\m{y}_{t+1}) \|^2.
\end{align}
By Lemma \ref{lem:non:lip}, we have
\begin{subequations}
\begin{equation}\label{eqn:prthm:non2}
    \begin{split}
&- \left\langle \nabla F_{t, \m{u}}(\m{x}_t, \m{y}^*_t(\m{x}_{t})),   \tilde{\nabla} F_{t, \m{u}}(\m{x}_t,\m{y}_{t+1})  \right\rangle =  - \left\langle  \nabla F_{t, \m{u}}(\m{x}_t, \m{y}^*_t(\m{x}_{t})),   \nabla F_{t, \m{u}}(\m{x}_t, \m{y}^*_t(\m{x}_{t}))\right\rangle\\
&\qquad \qquad \qquad - \left\langle  \nabla F_{t, \m{u}}(\m{x}_t, \m{y}^*_t(\m{x}_{t})),     \tilde{\nabla} F_{t, \m{u}}(\m{x}_t,\m{y}_{t+1}) -\nabla F_{t, \m{u}}(\m{x}_t, \m{y}^*_t(\m{x}_{t})) \right\rangle \\
&\qquad \qquad \qquad  \leq  -\frac{1}{2} \left\| \nabla F_{t, \m{u}}(\m{x}_t, \m{y}^*_t(\m{x}_{t}))\right\|^2 + \frac{1}{2} \left\|\tilde{\nabla} F_{t, \m{u}}(\m{x}_t, \m{y}_{t+1})- \nabla F_{t, \m{u}}(\m{x}_t, \m{y}^*_t(\m{x}_{t}))\right\|^2 \\
&\qquad \qquad \qquad  \leq  -\frac{1}{2} \left\| \nabla F_{t, \m{u}}(\m{x}_t, \m{y}^*_t(\m{x}_{t}))\right\|^2 + \frac{M_f^2}{2} \left \|\m{y}_{t+1}- \m{y}^*_t(\m{x}_{t})\right\|^2,
    \end{split}
\end{equation}
and
\begin{align}\label{eqn:prthm:non3}   \nonumber\|\tilde{\nabla} F_{t, \m{u}}(\m{x}_t,\m{y}_{t+1}) \|^2 &\leq
   2 \|  \nabla F_{t,\m{u}}(\m{x}_t,\m{y}^*_t(\m{x}_{t}))\|^2+ 2  \left\|\tilde{\nabla} F_{t, \m{u}}(\m{x}_t, \m{y}_{t+1})- \nabla F_{t, \m{u}}(\m{x}_t, \m{y}^*_t(\m{x}_{t}))\right\|^2
   \\&\leq
   2 \|  \nabla F_{t,\m{u}}(\m{x}_t,\m{y}^*_t(\m{x}_{t}))\|^2+ 2 M_f^2 \left\|\m{y}_{t+1}- \m{y}^*_t(\m{x}_{t})\right\|^2.
\end{align}
\end{subequations}
Substituting \eqref{eqn:prthm:non2} and \eqref{eqn:prthm:non3} into \eqref{eqn:prthm:non1}, rearranging terms and summing up from $t = 1$ to $t = T$, we obtain
\begin{align}\label{eqn:grad:I13}
\nonumber &\quad \left(\frac{\alpha}{2}-L_f \alpha^2\right) \sum_{t=1}^{T}\| \nabla F_{t,\m{u}}(\m{x}_t,\m{y}^*_t(\m{x}_{t})) \|^2
\\\nonumber & \leq  \sum_{t=1}^{T} \left( F_{t,\m{u}}(\m{x}_t,\m{y}^*_t(\m{x}_{t}))  - F_{t,\m{u}}(\m{x}_{t+1},\m{y}^*_t(\m{x}_{t+1}))\right)
+M_f^2\left(\frac{\alpha}{2}+L_f \alpha^2\right)\sum_{t=1}^{T} \left\|\m{y}_{t+1}- \m{y}^*_t(\m{x}_{t})\right\|^2
\\&\leq    \frac{2T M }{W} +2M
+ \ell_{f,0} H_{1,T}+M_f^2\left(\frac{\alpha}{2}+L_f \alpha^2\right)\sum_{t=1}^{T} \left\|\m{y}_{t+1}- \m{y}^*_t(\m{x}_{t})\right\|^2,
\end{align}
where the second inequality  follows from Lemma \ref{lem:capdiff}.

Note that our choices of the stepsizes $\alpha_t$ and $\beta_t$ as 
\begin{align*}
\beta_t=\beta =\frac{2}{\ell_{g,1}+\mu_g},~~~\textnormal{and}~~~ \alpha_t=\alpha\leq \min \left\{ \frac{1}{8L_f}, \frac{1}{2\sqrt{2}L_{\m{y}} M_f(\kappa_g^2-1  )^{1/2}}\right\},   
\end{align*}
satisfy the condition of Lemma \ref{lm:nn}. Hence, from Lemma \ref{lm:nn}, we get
\begin{align*}
\nonumber &\quad\left(\frac{\alpha}{2}-L_f \alpha^2\right) \sum_{t=1}^{T}\| \nabla F_{t,\m{u}}(\m{x}_t,\m{y}^*_t(\m{x}_{t})) \|^2
\\&\leq 
\frac{2T M }{W} +2M
+ \ell_{f,0} H_{1,T}
+\frac{1}{2}\left(\frac{\alpha}{2}+L_f \alpha^2\right)\left(\frac{\kappa_g-1}{\kappa_g+1}\right)\sum_{t=1}^{T}\left\|\nabla F_{t,\m{u}}(\m{x}_{t},\m{y}^*_{t}(\m{x}_{t}))\right\|^2  
\\&+M_f^2\left(\frac{\alpha}{2}+L_f \alpha^2\right)\left(\kappa_g-1\right)^2\left( \frac{\|\m{y}_{1}-\m{y}^*_{1}(\m{x}_{1})\|^2}{2(\kappa_g+1)} 
+2H_{2,T}\right)
.
\end{align*}
Rearranging the terms leads to
\begin{align}\label{po}
\nonumber &\quad\frac{1}{2(\kappa_g+1)}\left( \frac{(\kappa_g+3)\alpha}{2}-3(\kappa_g+\frac{1}{3})L_f \alpha^2\right)\sum_{t=1}^T \| \nabla F_{t,\m{u}}(\m{x}_t, \m{y}^*_t(\m{x}_t)) \|^2
\\& \leq 
\frac{2T M }{W} +2M
+ \ell_{f,0} H_{1,T}
+M_f^2\left(\frac{\alpha}{2}+L_f \alpha^2\right)(\kappa_g-1)^2\left( \frac{\|\m{y}_{1}-\m{y}^*_{1}(\m{x}_{1})\|^2}{2(\kappa_g+1)} 
+2H_{2,T}\right)
.
\end{align}
Since $\alpha \leq 1/(8L_f)$, we have 
\begin{align*}
     &\frac{(\kappa_g+3)\alpha}{2}-3(\kappa_g+\frac{1}{3})L_f \alpha^2\geq\frac{(\kappa_g+1)\alpha}{2}-3(\kappa_g+1)L_f \alpha^2\geq \frac{\alpha}{8}(\kappa_g+1),~~~\textnormal{and}
     \\&\frac{\alpha}{2}+L_f \alpha^2\leq \frac{5}{8}\alpha .
\end{align*}
Substituting the above observations into \eqref{po} gives
\begin{align*}
\nonumber \frac{\alpha}{16}\sum_{t=1}^T \| \nabla F_{t,\m{u}}(\m{x}_t, \m{y}^*_t(\m{x}_t)) \|^2
& \leq      \frac{2T M }{W} +2M
+ \ell_{f,0} H_{1,T}
\\&+\frac{5}{8}\alpha  M_f^2(\kappa_g-1)^2\left( \frac{\|\m{y}_{1}-\m{y}^*_{1}(\m{x}_{1})\|^2}{2(\kappa_g+1)} 
+2H_{2,T}\right)
.
\end{align*}
Therefore, we get the following bound
\begin{align*}
\nonumber  \sum_{t=1}^T \| \nabla F_{t,\m{u}}(\m{x}_t, \m{y}^*_t(\m{x}_t)) \|^2
& \leq     \frac{16}{\alpha}\left( \frac{2T M }{W} +2M
+ \ell_{f,0} H_{1,T}\right)
\\& +  10M_f^2    (\kappa_g-1)^2\left( \frac{\|\m{y}_{1}-\m{y}^*_{1}(\m{x}_{1})\|^2}{2(\kappa_g+1)} 
+2H_{2,T}\right)
\\&=\mc{O}\left( \frac{ T}{ W}+  H_{1,T}+   H_{2,T}\right).
\end{align*}
This completes the proof.
\end{proof}

\section{Addendum to Section~\ref{sec:exp}: Implementation Details and Additional Experiments}\label{app:sec:experiments}

\subsection{Details on Online Hyperparameter Learning for Dynamic Regression}\label{app:sec:experiments:dynamic:regression}

The synthetic data are generated as follows: To simulate the distribution changes, we generate the output according to $b_t = \m{a}_t^\top \m{y}_s^*(\m{x}_s^*)+ \epsilon_t$, where $(\m{x}_s^*,\m{y}_s^*(\m{x}_s^*))\in \mb{R}^{d_1} \times  \mb{R}^{d_2}$ is the underlying model for $s$-th stage, and $\epsilon_t \in [0,0.1]$ is the random noise. We consider two setups for the underlying model: (i) there are three changes ($S=3$) in the minimizers $(\m{x}_s^*,\m{y}_s^*(\m{x}_s^*))$, and (ii) the underlying model is fixed ($S=1$), i.e., $(\m{x}^*,\m{y}^*(\m{x}^*))=(\m{x}_s^*,\m{y}_s^*(\m{x}_s^*))$ for all $t \in [T]$. The time horizon, the outer and inner dimensions are set to $T=5000$, $d_1=1$ and $d_2=5$, respectively.

We used a grid-search of parameters in our experiments in Subsection~\ref{sec:exp:dynamic:regression}. For the grid-search setting, we select the best performing parameters \( K_t, \alpha,\beta \) from a grid \( \{5, 10\} \times \{0.001, 0.01, 0.1, 0.5\} \times \{0.001, 0.01, 0.1, 0.5\} \).  The smoothing (averaging) parameter \( \delta \) is set to 0.9.   All algorithms have been run on a Mac machine equipped with a 1.8 GHz Intel Core i5 processor and 8 GB RAM.

\subsection{Details and Additional Experiments on Online Parametric Loss Tuning Implementation}\label{app:sec:detials:autobalance}
This subsection provides details on implementing online parametric loss tuning tailored for imbalanced data. Additionally, it includes extra experiments conducted on two other datasets: Tadpole and Adult.

\subsubsection{Dataset Specifications and Model Architectures}
\paragraph{MNIST} 
 The MNIST image dataset~\citep{lecun2010mnist} comprises $10$ classes of human-written numbers ranging from $0$ to $9$. The dataset contains a total of $60,000$ training images and $10,000$ testing images, each sized at 28$\times28$. Consequently, there are approximately $6,000$ training images and $1,000$ testing images for each class. To introduce imbalance into the training and validation data, we randomly selected
$
 5000\times0.6^i,~ i=0,1,\ldots 9   
$
samples from the original training data for each class. These samples were then divided into new training and validation datasets at a 4:1 ratio. We employed a 4-layer convolutional neural network (CNN) for all comparison algorithms. Each convolutional block in the network consists of a $3\times3$ convolution (with padding=1 and stride=1), batch normalization, ReLU activation, and $2\times2$ max pooling. 
The CNN has $64$ filters in every convolutional layer.

\paragraph{Tadpole} In the Appendix, we conduct additional experiments on the Tadpole dataset~\citep{marinescu2019tadpole}. The Tadople dataset is introduced in Grand Challenge~\footnote{\url{https://tadpole.grand-challenge.org/Data/}}, a platform for end-to-end development of machine learning solutions in biomedical imaging. Tadpole is an abbreviation for The Alzheimer's Disease Prediction Of Longitudinal Evolution (TADPOLE), a subset of the Alzheimer’s Disease Neuroimaging Initiative (ADNI)~\footnote{\url{https://adni.loni.usc.edu/}}, which constitutes an extensive data collection for Alzheimer’s disease (AD). Initially, Tadpole contains 12,741 samples and 1,907 features. 
Its classes include individuals classified as cognitively normal (CN), mild cognitive impairment (MCI) or Alzheimer's disease (AD). 
We only select 17 commonly-used features, including `CDRSB', `ADAS11', `MMSE', `RAVLT\_immediate', `Hippocampus', `WholeBrain', `Entorhinal', `MidTemp', `FDG', `AV45', `ABETA\_UPENNBIOMK9\_04\_19\_17', `TAU\_UPENNBIOMK9\_04\_19\_17', `PTAU\_UPENNBIOMK9\_04\_19\_17', `APOE4', `AGE', `ADAS13', `Ventricles'. We exclusively select classes MCI and AD to form the two-class classification task. 
The two classes are already imbalanced, with 2,106 samples in the AD class and 4,044 in the MCI class.
To further imbalance the dataset, we only select half of the samples from AD. We utilize a 2-layer multilayer perceptron (MLP) with ReLU as the activation function and employ Dropout for regularization.

\paragraph{Adult} We also conduct additional experiments on the Adult dataset~\citep{misc_adult_2}, 
 aiming to predict an individual’s annual income 
based on various factors, including the individual’s education level, age, gender, occupation and more.  The dataset originally comprises 48,842 samples and 15 features. After removing samples with missing values and duplicated features following the process introduced in Kaggle~\footnote{\url{https://www.kaggle.com/code/amirhosseinzinati/adult-income-k-nearest-neighbors-knn}}, we have 45,175 samples and 11 features. The two classes are defined as follows: income less than or equal to \$50K (class 0) and income greater than \$50K (class 1). The original distribution is already imbalanced (0 vs 1 is 3:1). Thus, we do not modify it further. We also employ a 2-layer multilayer perceptron (MLP) with ReLU as the activation function and Dropout for regularization.

\subsubsection{Baselines and Setting Details}
In our experiments, we compare our method to two baselines: one being AutoBalance~\citep{li2021autobalance}, and the other being Single-Level OGD~\citep{zinkevich2003online}.
\paragraph{AutoBalance} 
AutoBalance~\citep{li2021autobalance} is an offline bilevel gradient descent framework that updates hyperparameters $\mathbf{x}_t$ and the model $\mathbf{y}_t$ to address imbalance issues. We essentially adopt all the settings from the Autobalance study. Specifically, in all three datasets, the inner-level optimization trains the CNN model using a learning rate of 0.1, momentum of 0.9, and weight decay of $1e-4$. However, to adapt it to the online environment and ensure a fair comparison, AutoBalance will, at each timestep, utilize all the observed data until the current timestep to train the model instead of employing a fixed number of batches, as in the original setting of AutoBalance. This will allow AutoBalance to run quickly at the beginning but progressively slower as time passes. At the outer level, AutoBalance does not initiate training from the beginning. Instead, AutoBalance usually initiates the outer level after the network achieves near-zero loss. For MNIST, AutoBalance starts outer-level training at the 120th timestep, while it starts at the 80th and 40th timesteps for Tadpole and Adult, respectively. The learning rate for the outer level is 0.001 on all the three datasets. Our OAGD follows the same setting of AutoBalance for both the inner- and outer-level training.

\paragraph{Single-Level OGD}
The Single-Level OGD~\citep{zinkevich2003online} updates the model, $\mathbf{y}_t$, with fixed hyperparameters, $\mathbf{x}$, at each timestep  solely based on the newly observed data using gradient descent. Specifically, the hyperparameters include adjustments in multiplicative and additive logits, along with the inverse class weight. For Single-Level OGD, the multiplicative logits adjustment is 1, the additive logits adjustment is 0, and the inverse class weight is 1, resulting in a vanilla cross-entropy loss. The learning rate is 0.1 on all the three datasets.

\subsubsection{Additional Experiments}
\begin{figure*}[t]
\begin{center}
\includegraphics[scale=0.34]{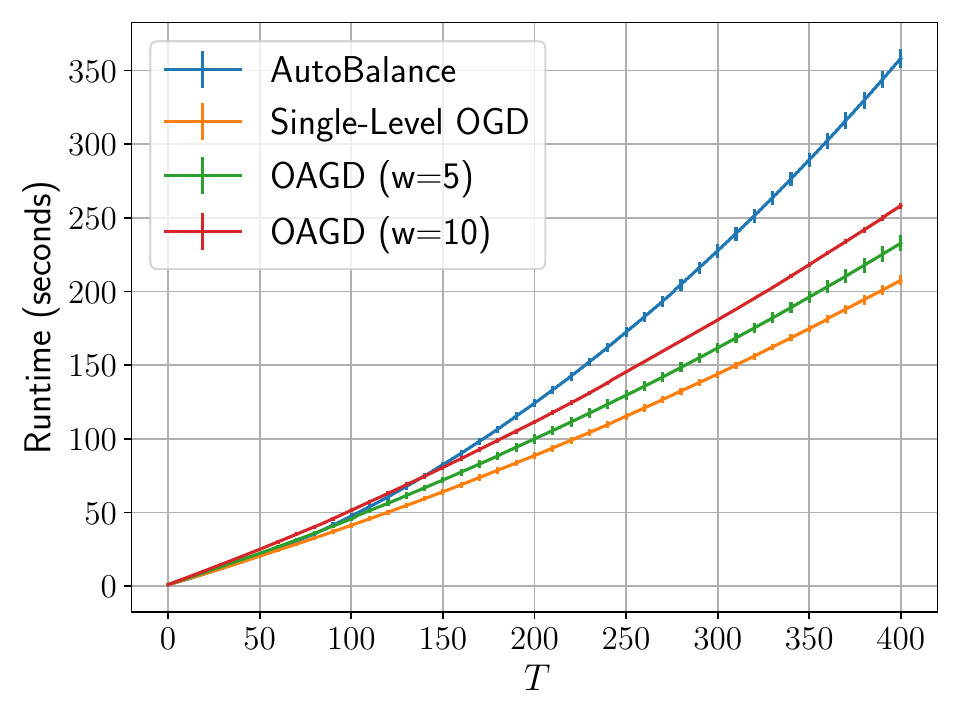}
\includegraphics[scale=0.34]{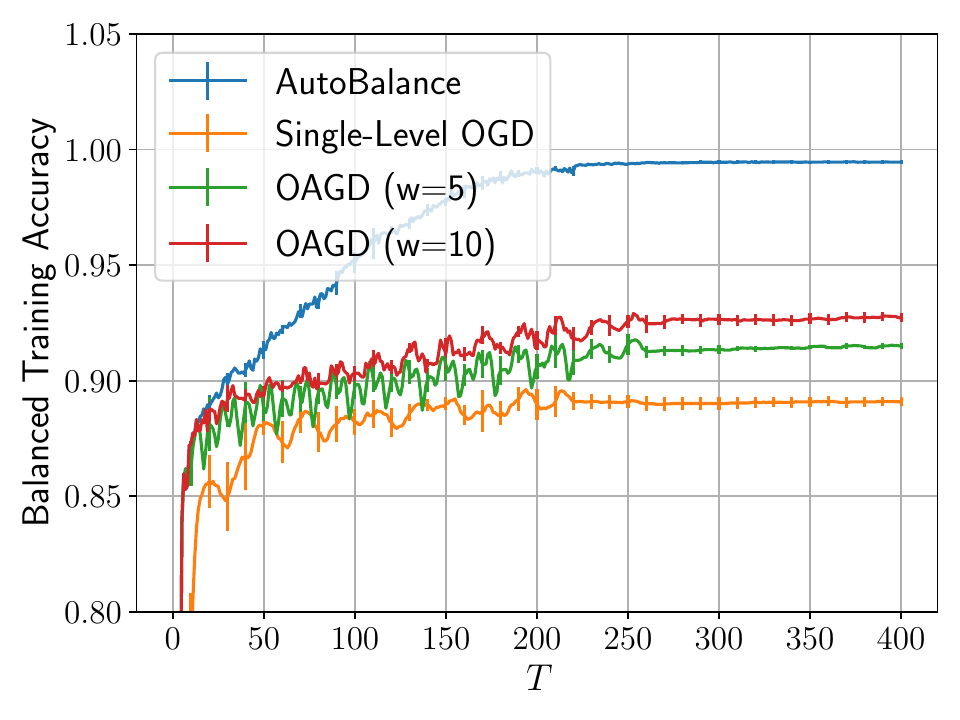} 
\includegraphics[scale=0.34]{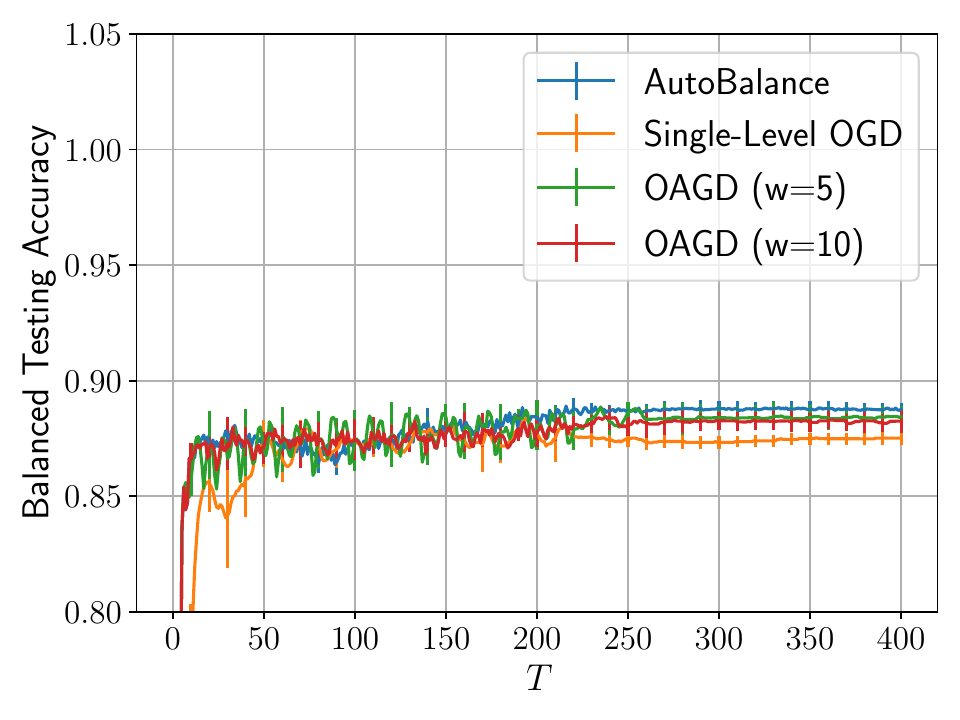}
\vspace{-.25cm}
\caption{\label{fig:tadpole_all}Performance comparison (mean$\pm$std) on parametric loss tuning for imbalanced \textbf{Tadpole} data over five runs. We compare our OAGD ($w=5, 10$)  with AutoBalance and Single-Level OGD. OAGD achieves comparable balanced testing accuracy to AutoBalance but with a reduced runtime.}  
\vspace{-.25cm}
\end{center}
\end{figure*}
\begin{figure*}[htbp]
\begin{center}
\includegraphics[scale=0.34]{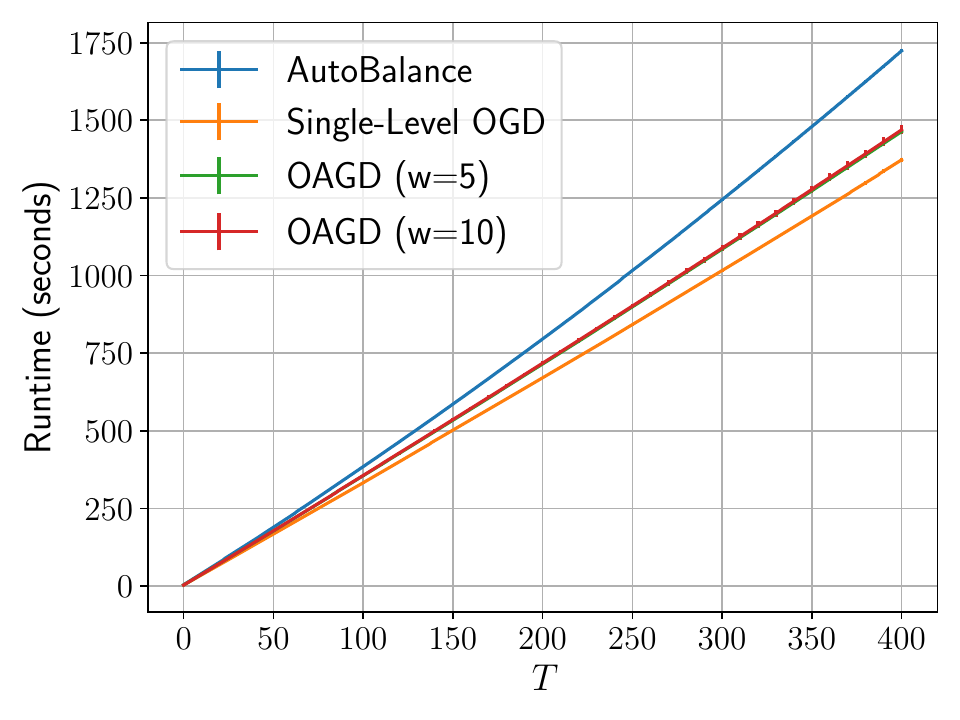}
\includegraphics[scale=0.34]{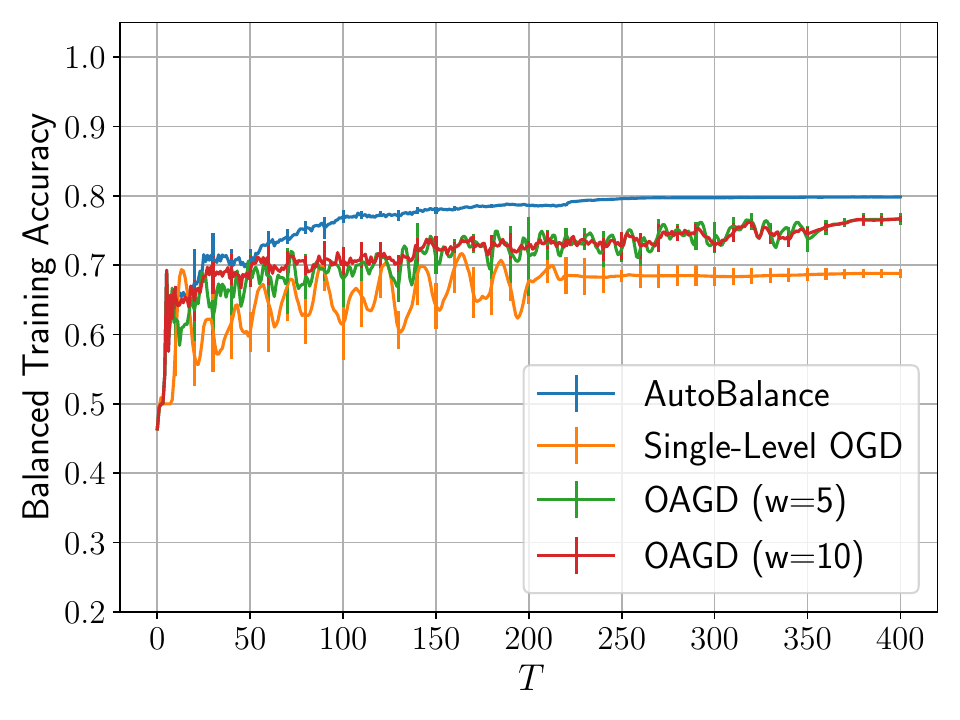} 
\includegraphics[scale=0.34]{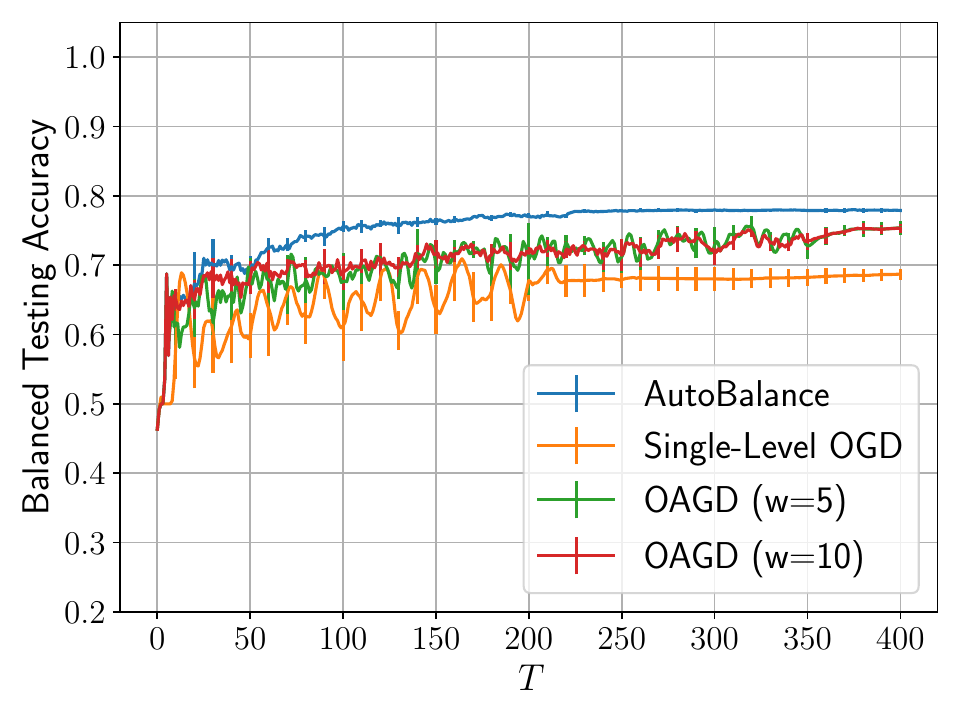}
\vspace{-.25cm}
\caption{\label{fig:adult_all}Performance comparison (mean$\pm$std) on parametric loss tuning for imbalanced \textbf{Adult} data over five runs. We compare our OAGD ($w=5, 10$)  with AutoBalance and Single-Level OGD. OAGD achieves comparable balanced testing accuracy to AutoBalance but with a reduced runtime.}  
\vspace{-.25cm}
\end{center}
\end{figure*}

Figure \ref{fig:tadpole_all} provides a performance comparison (mean$\pm$std) for parametric loss tuning on imbalanced \textbf{Tadpole} data across five runs. We compare our OAGD ($w=5, 10$)  with AutoBalance and Single-Level OGD. OAGD achieves a comparable balanced testing accuracy to AutoBalance but with a reduced runtime. AutoBalance outperforms OAGD and Single-Level OGD in terms of balanced training accuracy. This is because AutoBalance utilizes many more samples than OAGD and OGD. This allows AutoBalance to have a higher chance of overfitting the training data, resulting in high training accuracy, while still obtaining similar balanced testing accuracy compared to other methods.

Figure \ref{fig:adult_all} provides a performance comparison (mean$\pm$std) on parametric loss tuning for imbalanced \textbf{Adult} data over five runs. We compare our OAGD ($w=5, 10$)  with AutoBalance and Single-Level OGD. OAGD achieves comparable balanced accuracy to AutoBalance but with a reduced runtime.

\subsection{Details and Additional Experiments on Online Meta Learning}\label{app:sec:detials:oml}
This subsection provides the implementation details and additional experiments on miniImageNet of online meta learning.

\subsubsection{Datasets and Model Architectures}

\paragraph{FC100} FC100~\citep{oreshkin2018tadam} is a dataset derived from CIFAR100, containing 100 classes, with each class comprising 600 images of size 32. Following~\citep{oreshkin2018tadam}, the 100 classes are divided as follows: 60 classes for meta-training, 20 classes for meta-validation, and 20 classes for meta-testing. There are 36,000, 12,000, 12,000 samples in the original training, validation and testing datasets, respectively. We transform them into 20,000, 600 and 600 training, validation and testing tasks using the {\it TaskDataset} tool from learn2learn~\citep{arnold2020learn2learn}. For all comparison algorithms, we employ a 4-layer convolutional neural networks(CNN) comprising four convolutional blocks. Each block consists of a $3\times3$ convolution (padding=1, stride=2), batch normalization, ReLU activation, and $2\times2$ max pooling. Additionally, each convolutional layer contains 64 filters.

\paragraph{MiniImageNet}
The miniImageNet dataset~\citep{vinyals2016matching} is derived from ImageNet and comprises 100 classes, each containing 600 images sized at $84\times84$. Following the repository, we partition these classes into 64 classes for meta-training, 16 classes for meta-validation, and 20 classes for meta-testing. Following the repository, we use a four-layer CNN with four convolutional blocks, where each block sequentially consists of a $3\times3$ convolution, batch normalization, ReLU activation, and $2\times2$ max pooling. Each convolutional layer has 64 filters.

\subsubsection{Baselines and Setting Details}
In our experiments, we compare our method to three baselines: MAML~\citep{finn2017model}, ANIL~\citep{raghu2019rapid}, and ITD-BiO~\citep{ji2021bilevel}. Notably, these three methods are all implemented using iterative differentiation within the PyTorch framework. They leverage two modules, the {\it features} and the {\it head}, albeit in distinct ways. The {\it features} are used to process the raw input, such as the CNN which processes the image in our case, while the {\it head} is responsible for the final classification. To ensure a fair comparison, we set the inner learning rate $\beta=0.1$, the outer learning rate $\alpha$=0.001, and inner step $K=20$ for all the methods.

\paragraph{MAML}
MAML stands as the foundational work of meta-learning, in which meta-parameters are learned in the outer loop, while task-specific models are learned in the inner loop using only a small amount of data from the current task. In the implementation, MAML combines {\it features} and the {\it head} into the meta model, which is then cloned by the local model to perform local adaptation. We update all parameters of the meta model, including those of the {\it features} and {\it head}.

\paragraph{ANIL}
ANIL stands as a widely used meta-learning algorithm that simplifies MAML by eliminating the inner loop for all parts of the MAML-trained network except the task-specific head. In its implementation, only the {\it head} of the meta model, cloned by the local model, is employed for subsequent local adaptation. Nonetheless, the {\it features} remain utilized for data processing. Ultimately, both the parameters of the {\it features} and the {\it head} undergo updates.

\paragraph{ITD-BiO}
ITD-BiO is a gradient-based stochastic bilevel optimization framework relying on iterative differentiation (ITD). In its implementation, the meta model cloned by the local model comprises solely the {\it head} for subsequent local adaptation. Nevertheless, we continue to utilize the {\it features} for data processing. Ultimately, only the parameters of the {\it features} undergo updates.

It is worth noting that we do not adapt all the baselines to the online environment; instead, we directly utilize the implementation from \citep{ji2021bilevel}. In the online environment, data is observed in batches at a time, and the offline method updates the model by incorporating all the data observed up until the current timestep. This will initially speed up the training, but it will progressively slow down over time. Eventually, the entire process will become very time-consuming. Instead, the implementation uses only a fixed number of batches (tasks) randomly sampled from all available tasks at each timestep, which, in our case, is 32. We only use a specific window-size number of batches (tasks) and load them sequentially.

\begin{figure*}[t]
\begin{center}

\includegraphics[scale=0.34]{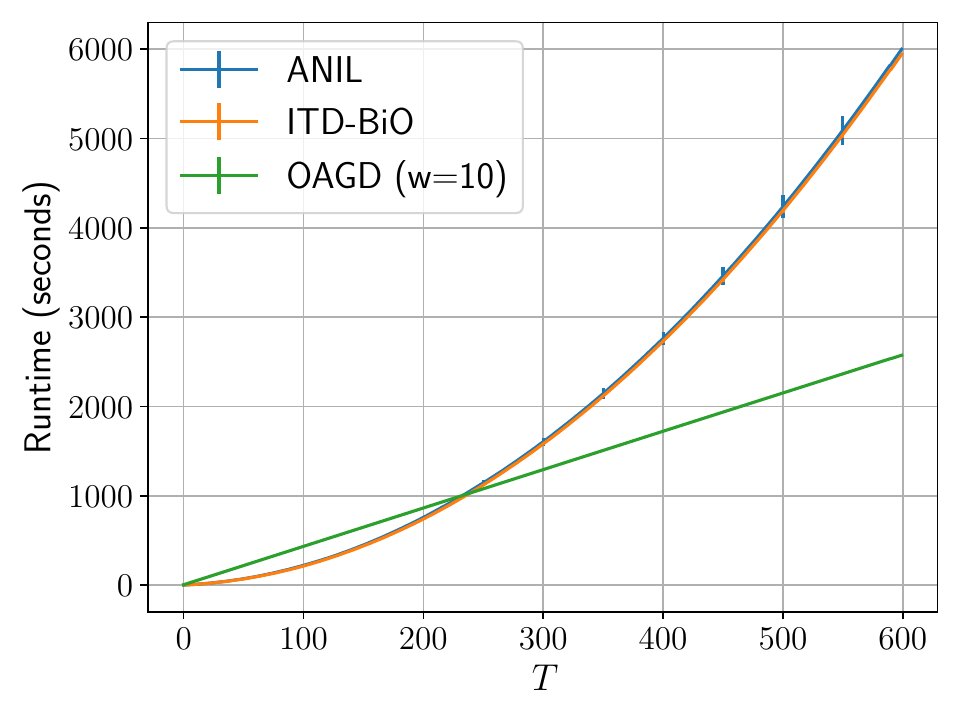}
\includegraphics[scale=0.34]{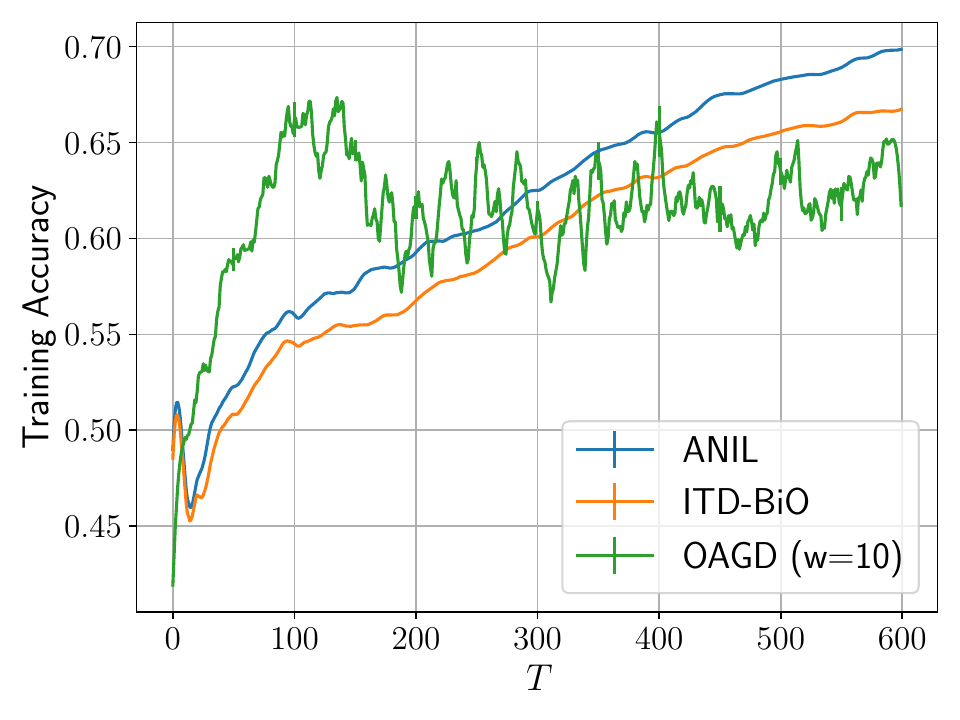} 
\includegraphics[scale=0.34]{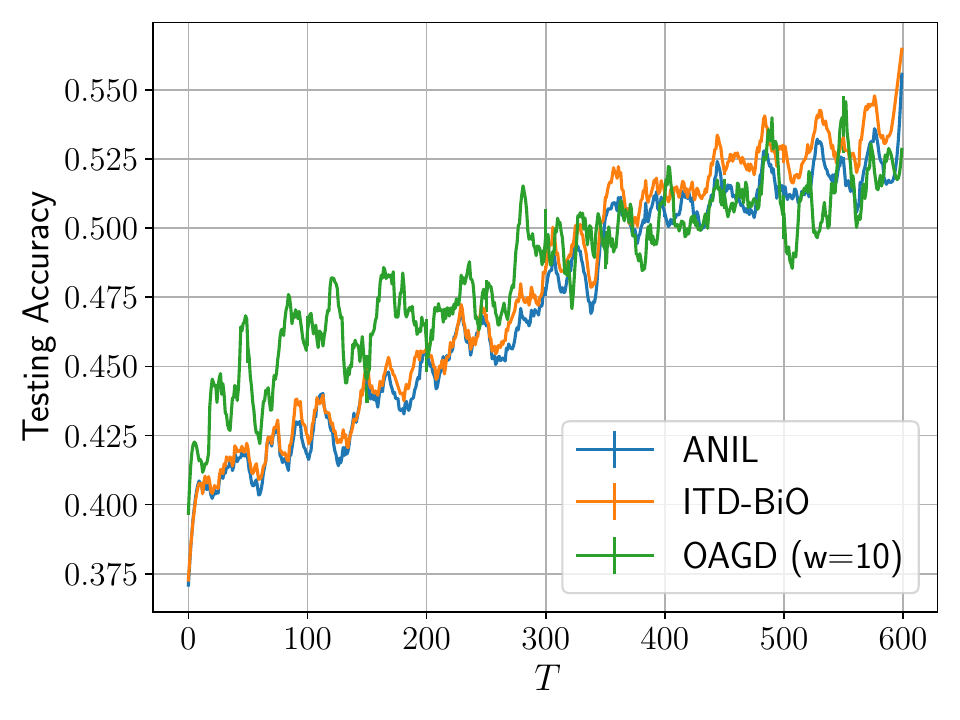}
\vspace{-3mm}
\caption{\label{fig:miniimage_all}Performance comparison (mean$\pm$std) for online meta-learning on the \textbf{miniImageNet} dataset across five runs. We compare our OAGD ($w=10$)  with ANIL and ITD-BiO. OAGD achieves comparable accuracy to the baselines while significantly reducing runtime.}
\end{center}
\vspace{-12pt}
\end{figure*}

\subsubsection{Results on Additional Dataset}\label{app:sec:add:datasets}

We conducted our experiments on an additional dataset, {\bf miniImageNet}. Figure \ref{fig:miniimage_all} provides a performance comparison (mean$\pm$std) for meta-learning on the miniImageNet dataset across five runs. We compare our OAGD ($w=10$) with ANIL and ITD-BiO. OAGD achieves comparable accuracy but with a shorter runtime.

\subsection{Sensitivity Analysis}\label{app:sec:sensetivity}
\begin{figure*}[htbp]
\begin{center}
\includegraphics[scale=0.34]{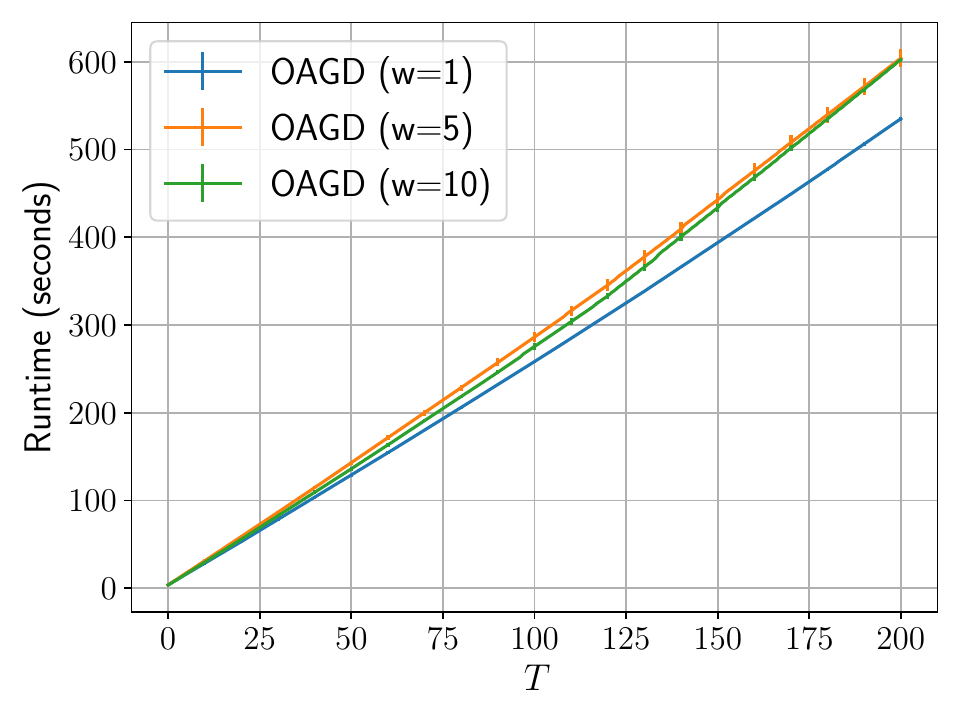}
\includegraphics[scale=0.34]{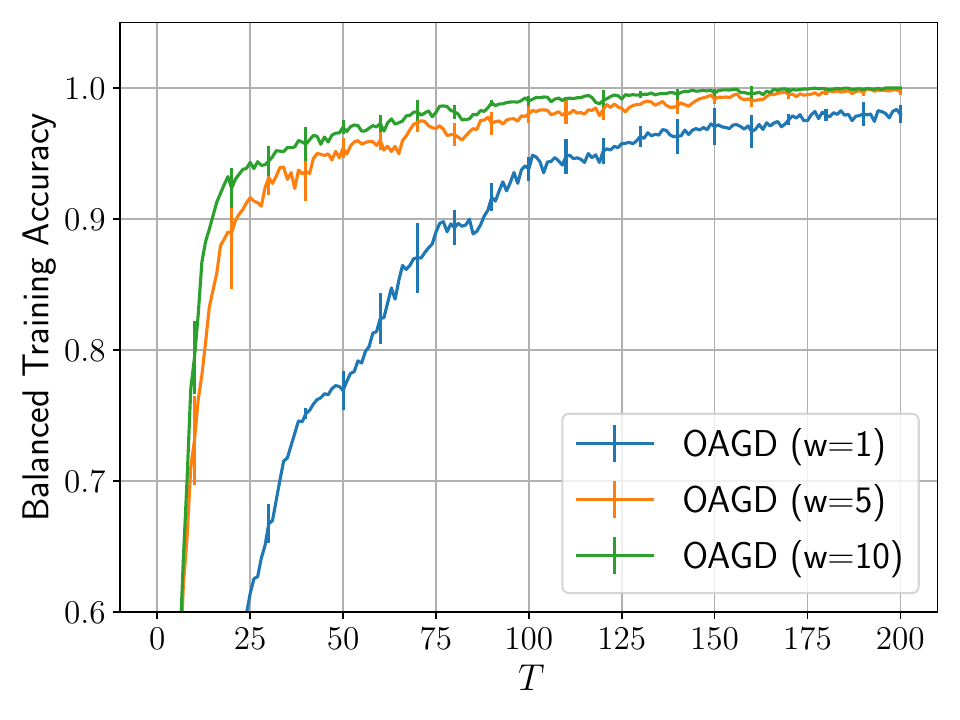} 
\includegraphics[scale=0.34]{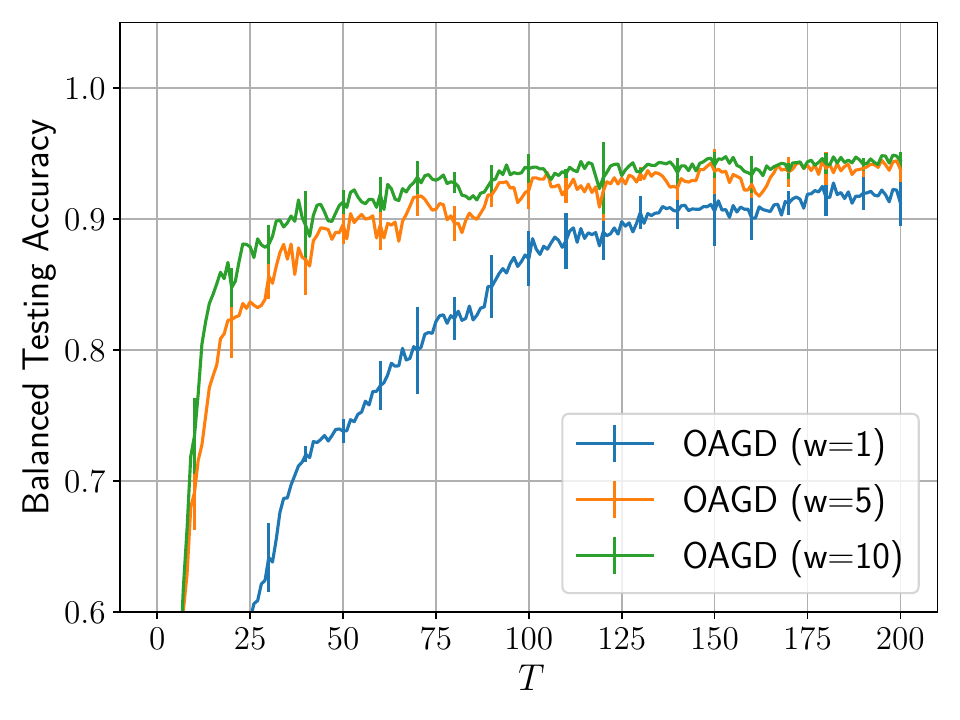}
\vspace{-.25cm}
\caption{\label{fig:mnist_win_sens}Performance comparison (mean$\pm$std) for parametric loss tuning on imbalanced \textbf{MNIST} data across five runs. We compare our OAGD across different window sizes ($w=1, 5, 10$). The larger the window size, the better the accuracy and the longer the runtime.}  
\vspace{-.25cm}
\end{center}
\end{figure*}
\begin{figure*}[htbp]
\begin{center}
\includegraphics[scale=0.34]{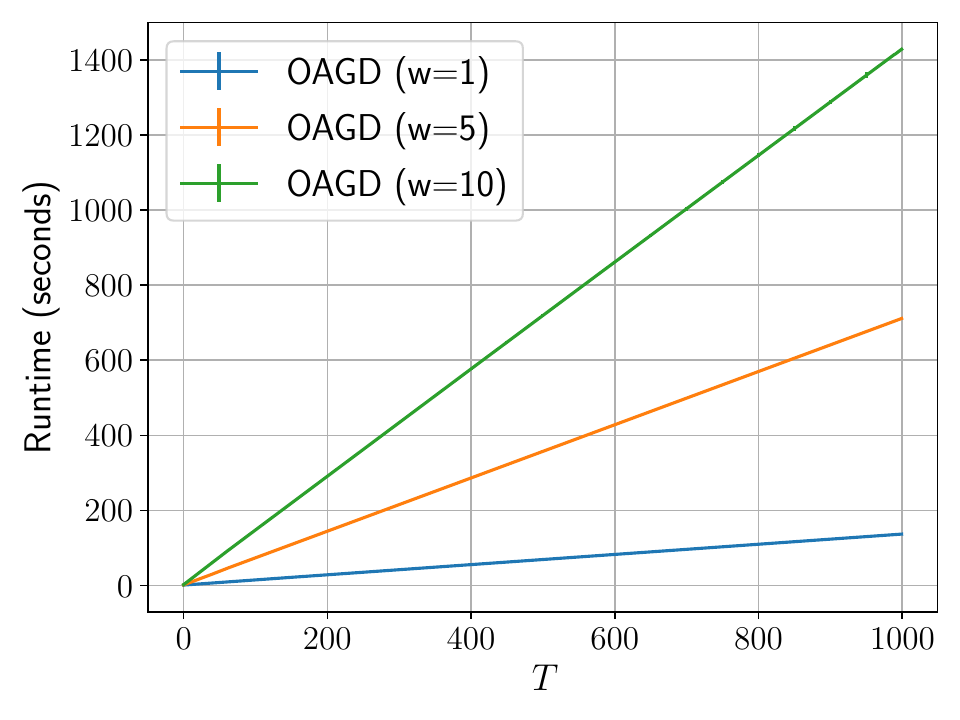}
\includegraphics[scale=0.34]{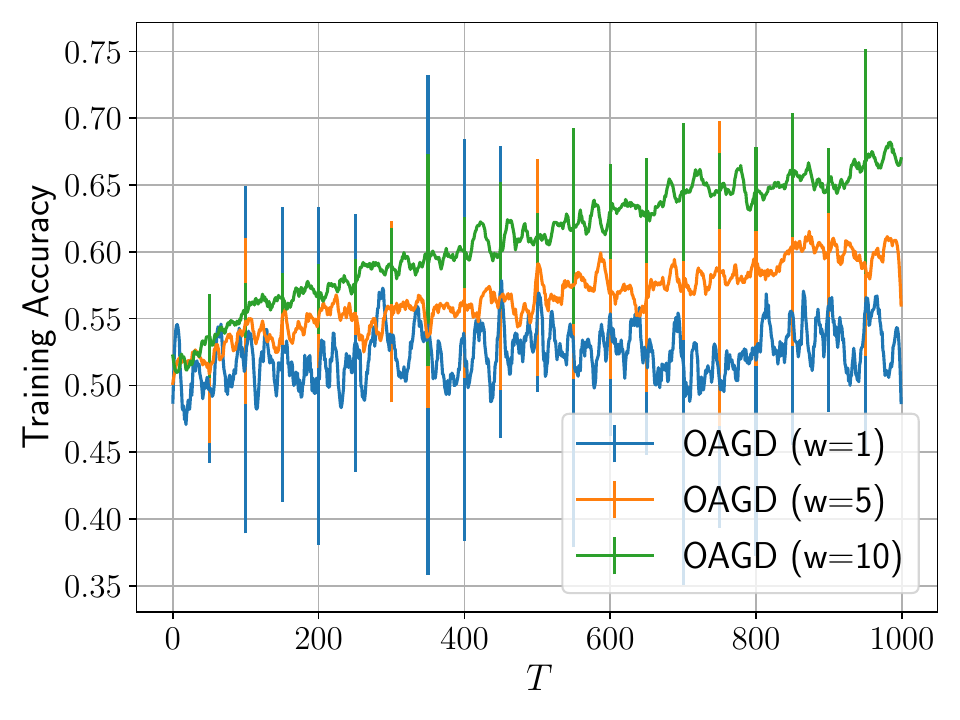} 
\includegraphics[scale=0.34]{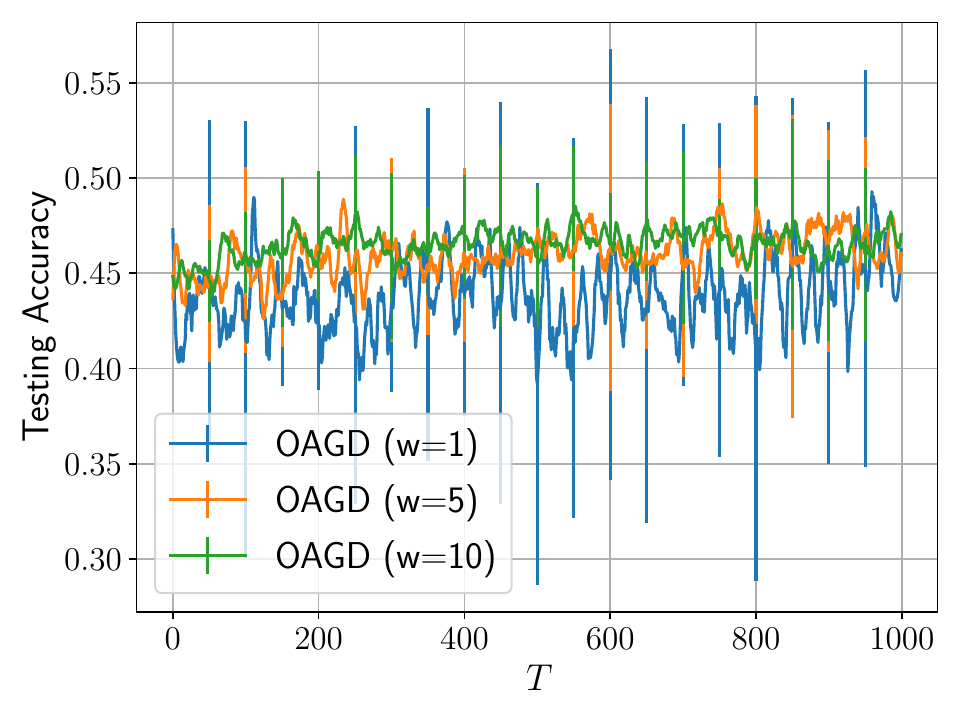}
\vspace{-.25cm}
\caption{\label{fig:fc100_win_sens}Performance comparison (mean$\pm$std) for meta-learning on \textbf{FC100} data across five runs. We compare our OAGD with different window sizes ($w=1, 5, 10$). The larger the window size, the better the accuracy and the longer the runtime.}  
\vspace{-.25cm}
\end{center}
\end{figure*}

In this section, we perform sensitivity analysis on the window size $w$, learning rate (both inner $\beta$ and outer $\alpha$), and inner optimization step $K$ to acquire a comprehensive understanding of our method.
\subsubsection{Sensitivity Analysis of Window Size $w$}
Figures~\ref{fig:mnist_win_sens} and \ref{fig:fc100_win_sens}  illustrate the results of our OAGD on MNIST and FC100 using different window sizes ($w=1,5,10$). As observed, the larger the window size, the higher the accuracy and the longer the runtime. This is expected, as a larger window size allows for the use of more information from previous timesteps. Consequently, the gradient can be approximated with greater accuracy, leading to improved results. However, this also requires more computations for gradient calculation. It is important to note that we do not merely record previous gradients. Instead, we leverage previous data and the current model to compute the gradients, an approach that has been proven to be more effective.


\subsubsection{Sensitivity Analysis of Learning Rates $\alpha$ and $\beta$}
Figures~\ref{fig:tadpole_compare_inner_lr} and \ref{fig:tadpole_compare_outer_lr} display the sensitivity results for the inner and outer learning rates, $\beta$ and $\alpha$, on the Tadpole dataset. Likewise, Figures~\ref{fig:adult_compare_inner_lr} and \ref{fig:adult_compare_outer_lr} exhibit the sensitivity results for the inner and outer learning rates on the Adult dataset. Specifically, when analyzing the sensitivity to the inner learning rate, we fix the outer learning rate at 0.001 and experiment with different inner learning rates ($\beta=0.9, 0.5, 0.1, 0.01$). Conversely, when examining sensitivity to the outer learning rate, we set the inner learning rate to 0.1 and test different outer learning rates ($\alpha=0.0001, 0.001, 0.01, 0.1$). Our observations indicate that our OAGD is not particularly sensitive to changes in either the inner or outer learning rates, as all cases demonstrate consistently high accuracy.
\begin{figure*}[htbp]
\begin{center}
\includegraphics[scale=0.34]{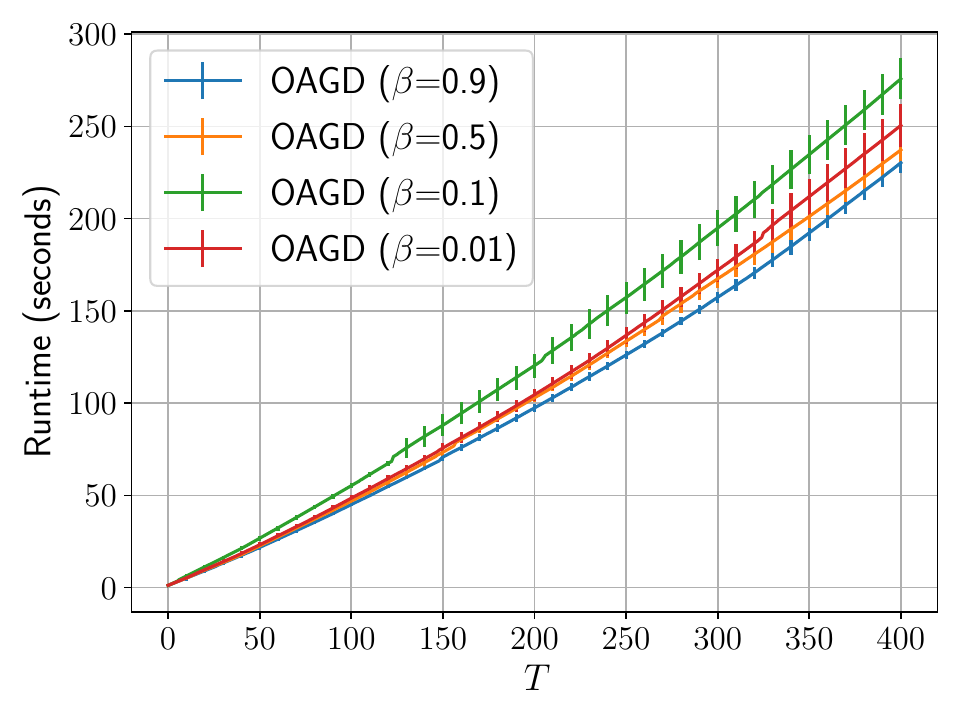}
\includegraphics[scale=0.34]{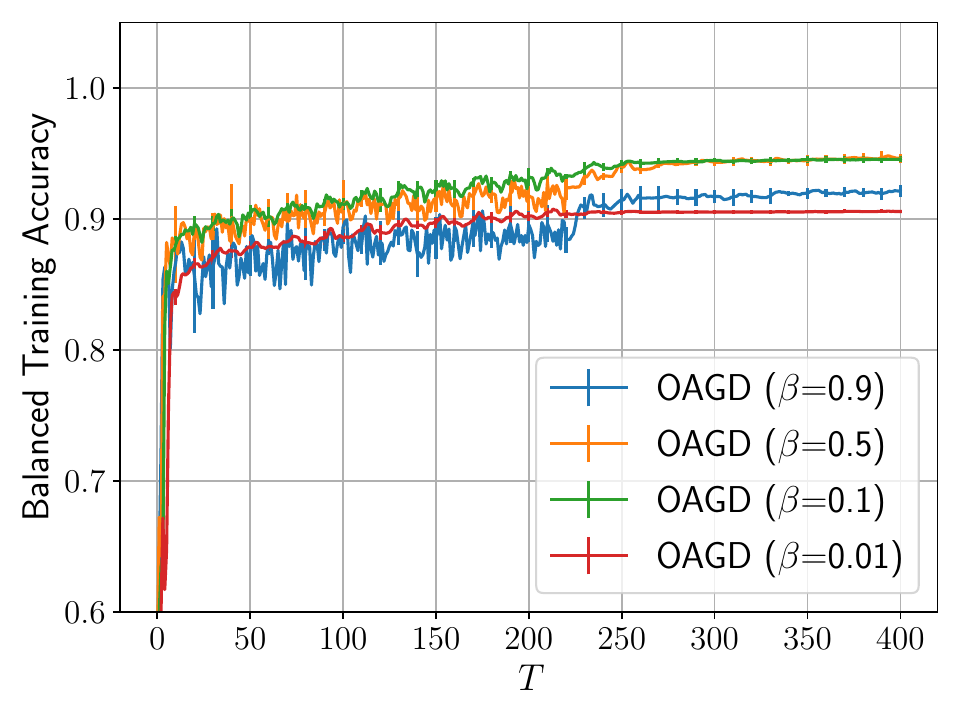} 
\includegraphics[scale=0.34]{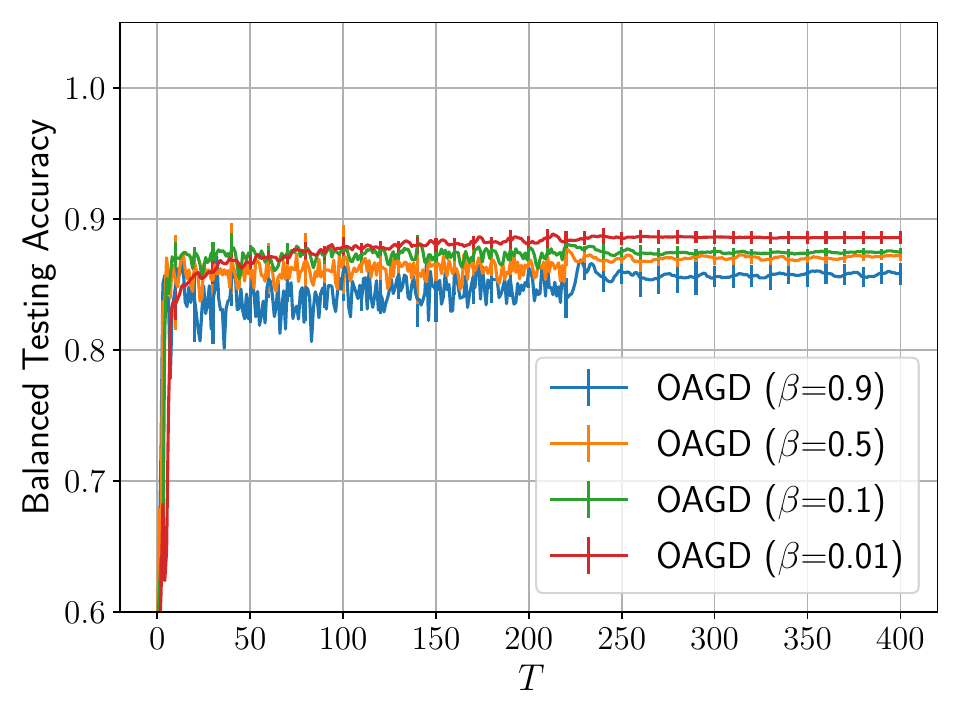}
\vspace{-.25cm}
\caption{\label{fig:tadpole_compare_inner_lr}Performance comparison (mean$\pm$std) on parametric loss tuning for imbalanced \textbf{Tadpole} data over five runs. We compare our OAGD ($w=10$) using various inner learning rates ($\beta=0.01, 0.1, 0.5, 0.9$) while keeping the outer learning rate fixed at $0.001$. Our OAGD is not significantly affected by changes in the inner learning rate.}  
\vspace{-.25cm}
\end{center}
\end{figure*}

\begin{figure*}[htbp]
\begin{center}
\includegraphics[scale=0.34]{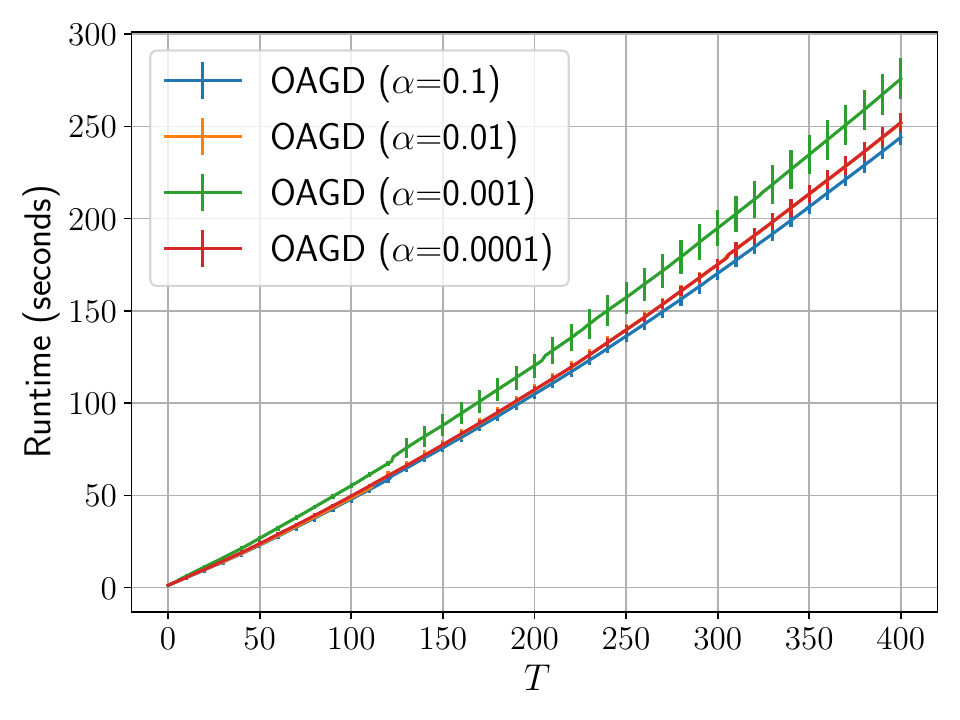}
\includegraphics[scale=0.34]{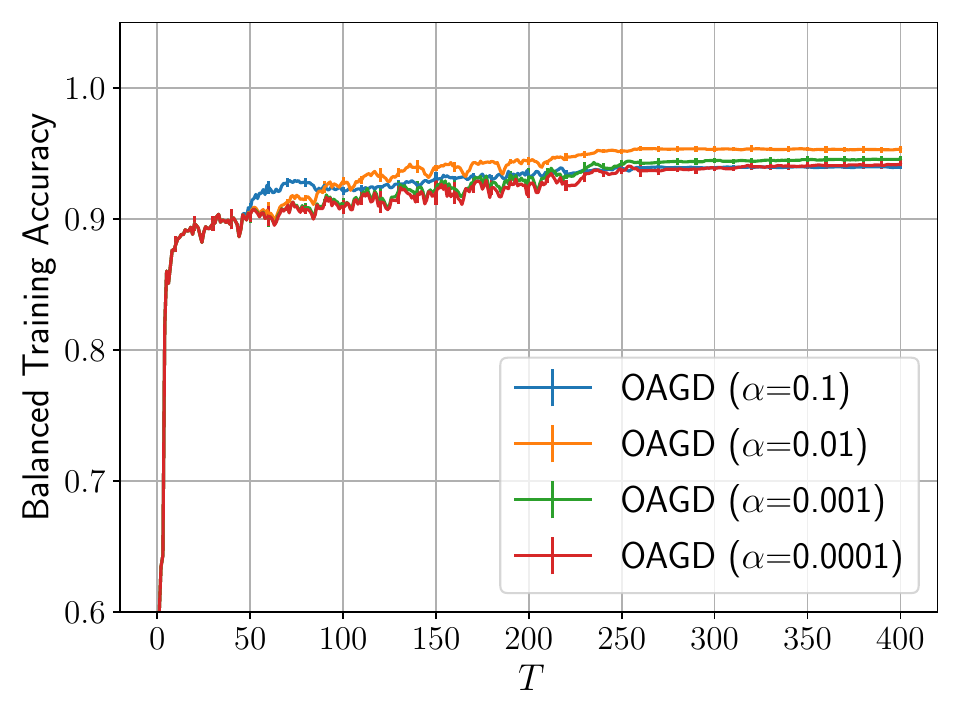} 
\includegraphics[scale=0.34]{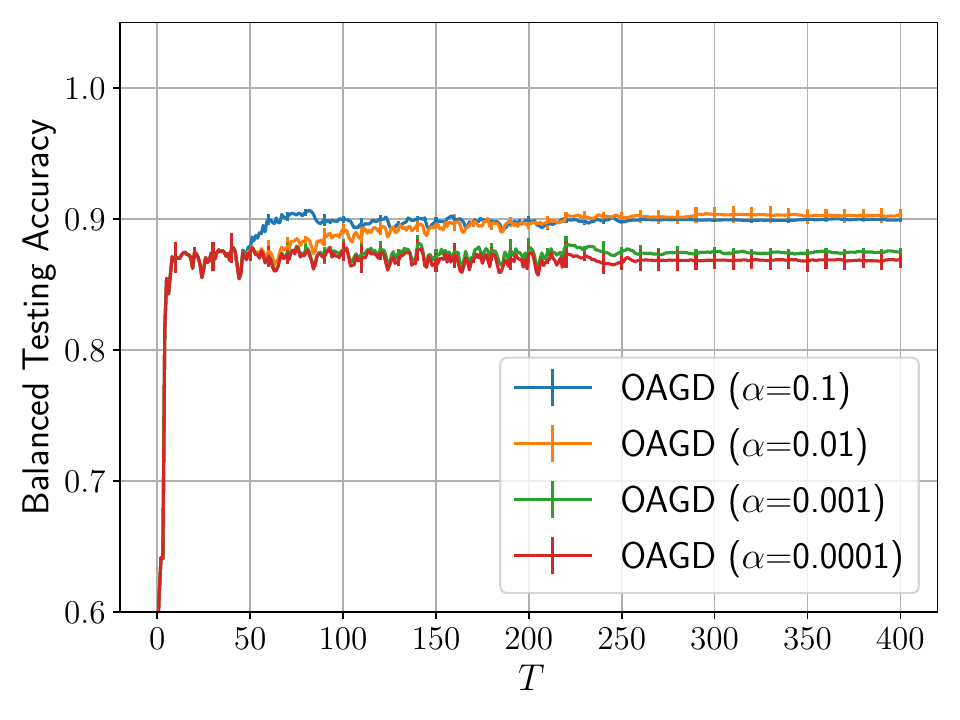}
\vspace{-.25cm}
\caption{\label{fig:tadpole_compare_outer_lr}Performance comparison (mean$\pm$std) on parametric loss tuning for imbalanced \textbf{Tadpole} data over five runs. We compare our OAGD ($w=10$) using various outer learning rates ($\alpha=0.0001, 0.001, 0.01, 0.1$) while maintaining a fixed inner learning rate of $0.1$. Our OAGD is not sensitive to the outer learning rate.}  
\vspace{-.25cm}
\end{center}
\end{figure*}

\begin{figure*}[htbp]
\begin{center}
\includegraphics[scale=0.34]{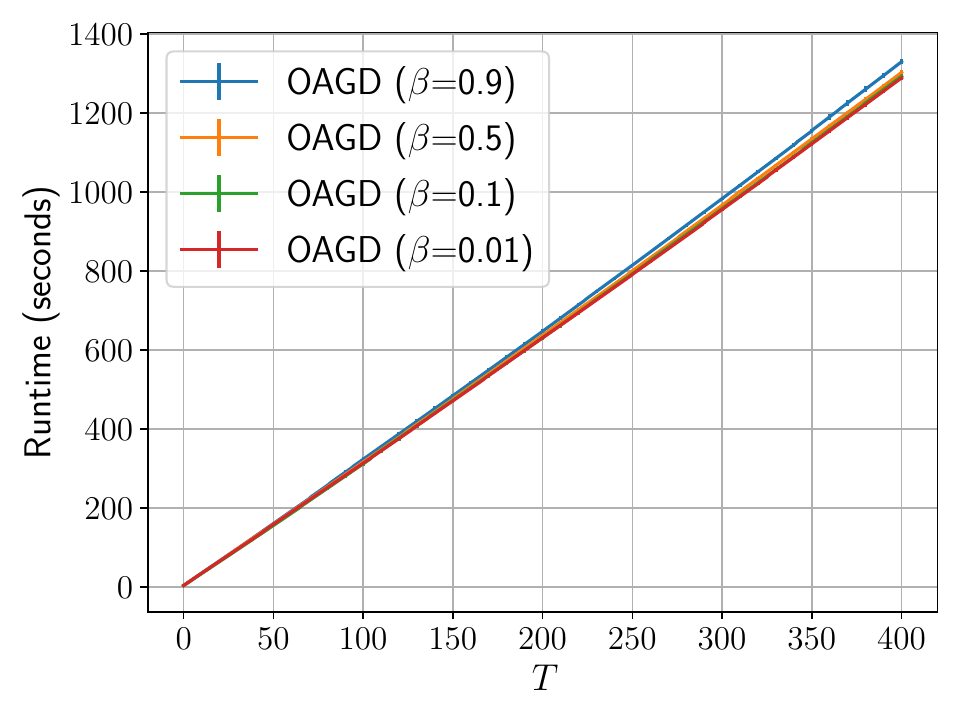}
\includegraphics[scale=0.34]{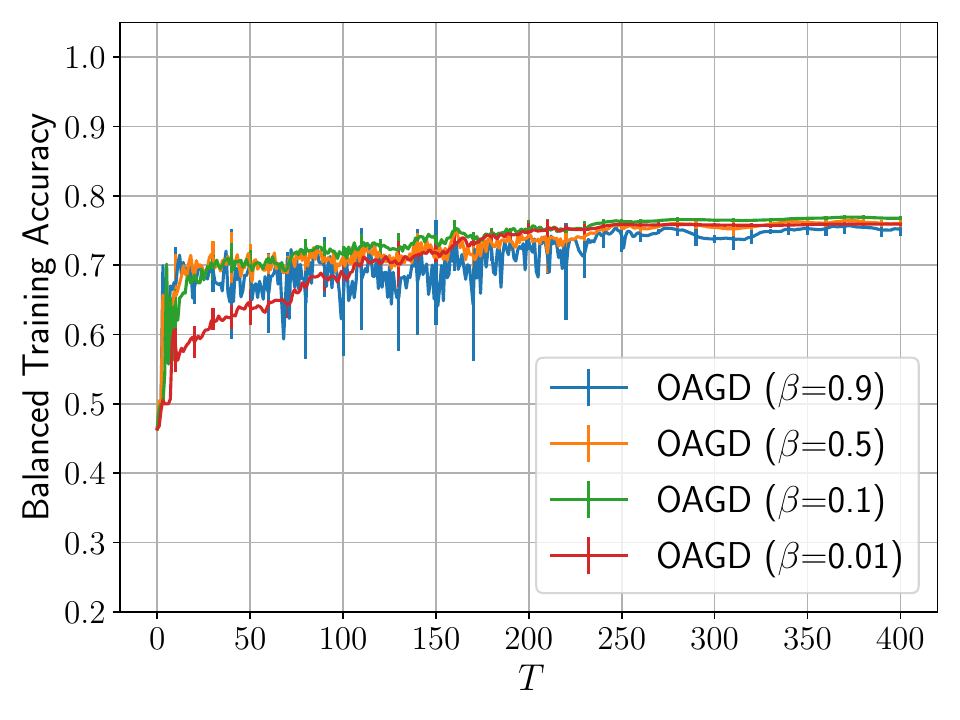} 
\includegraphics[scale=0.34]{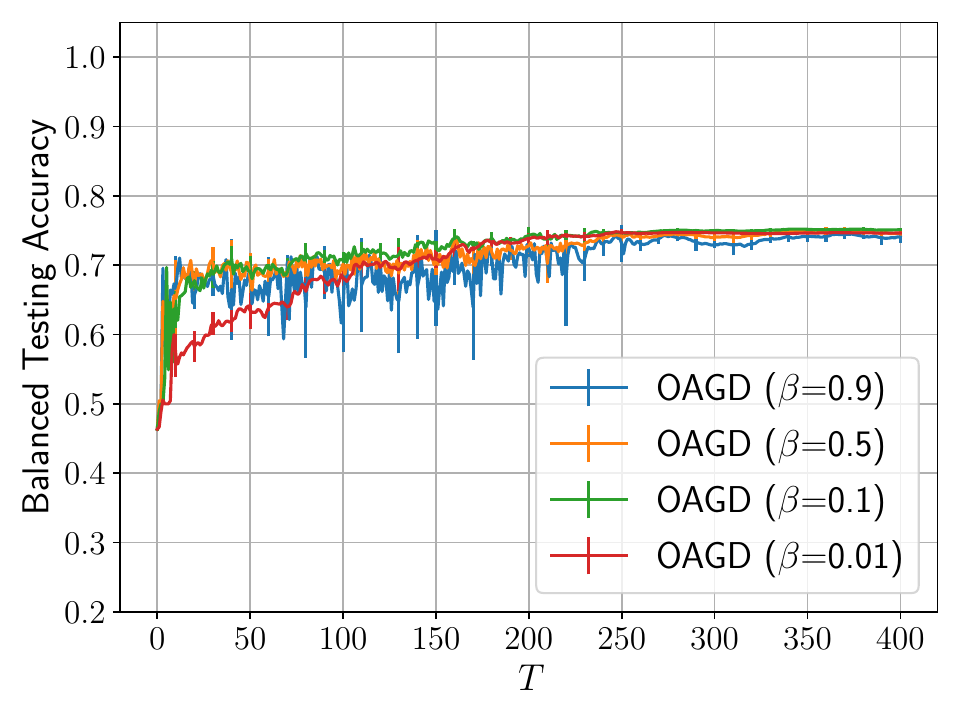}
\vspace{-.25cm}
\caption{\label{fig:adult_compare_inner_lr}Performance comparison (mean$\pm$std) on parametric loss tuning for imbalanced \textbf{Adult} data over five runs. We compare our OAGD ($w=10$) using various inner learning rates ($\beta=0.01, 0.1, 0.5, 0.9$) while keeping the outer learning rate fixed at $0.001$. Our OAGD is not sensitive to the inner learning rate.}  
\vspace{-.25cm}
\end{center}
\end{figure*}
\vspace{2cm}

\begin{figure*}[htbp]
\begin{center}
\includegraphics[scale=0.34]{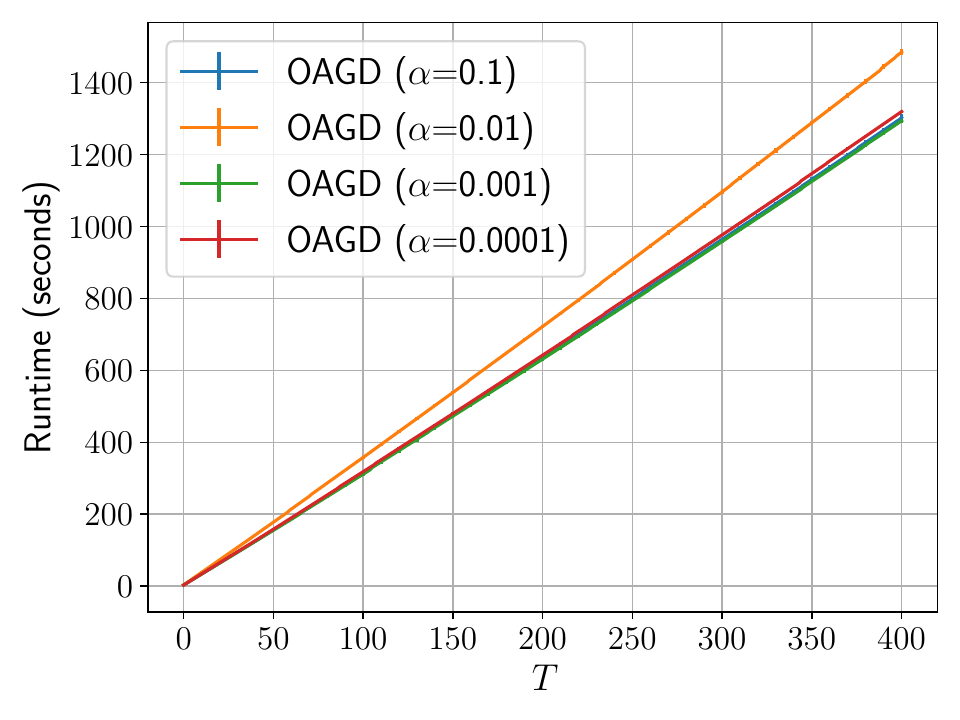}
\includegraphics[scale=0.34]{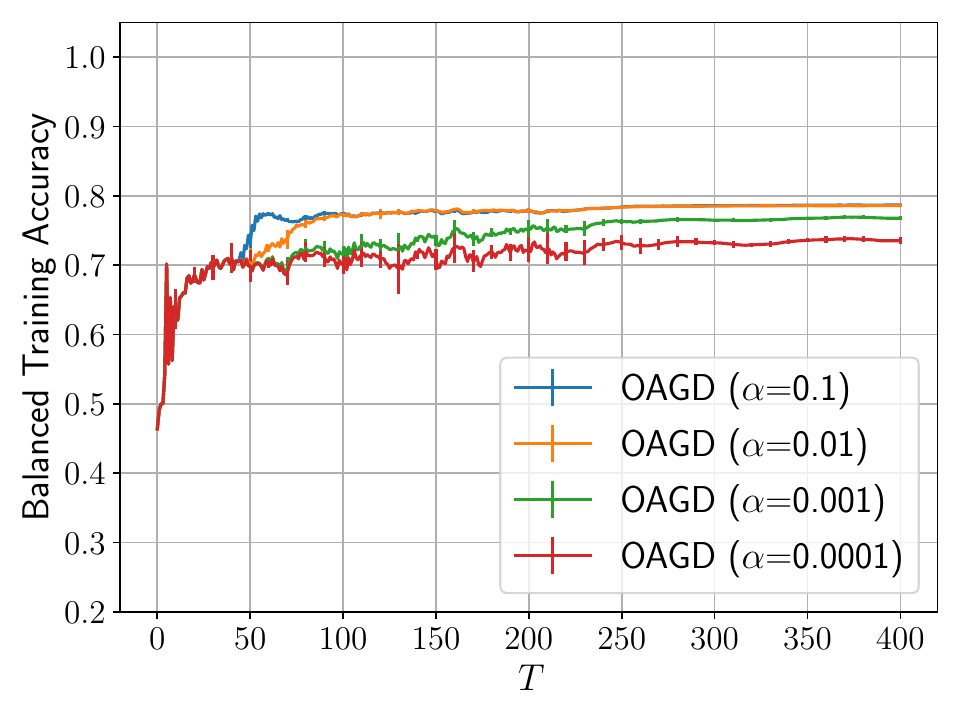} 
\includegraphics[scale=0.34]{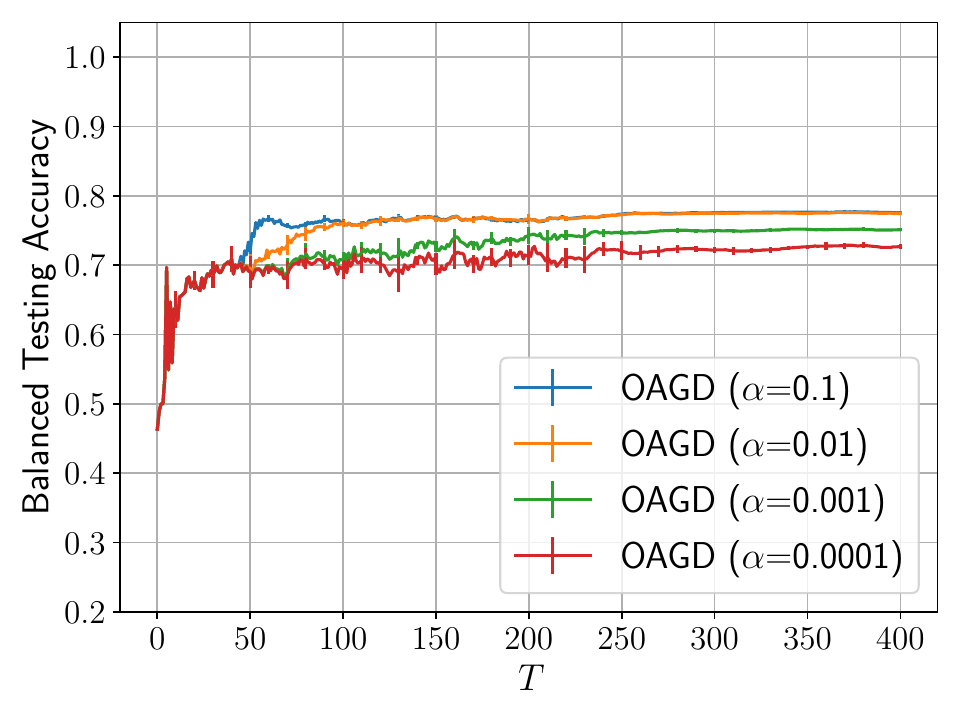}
\vspace{-.25cm}
\caption{\label{fig:adult_compare_outer_lr}Performance comparison (mean$\pm$std) on parametric loss tuning for imbalanced \textbf{Adult} data across five runs. We compare our OAGD ($w=10$) using different outer learning rates ($\alpha=0.0001, 0.001, 0.01, 0.1$) while keeping the inner learning rate fixed at $0.1$. Our OAGD is not sensitive to the outer learning rate.}  
\vspace{-.25cm}
\end{center}
\end{figure*}
\begin{figure*}[htbp]
\begin{center}
\includegraphics[scale=0.34]{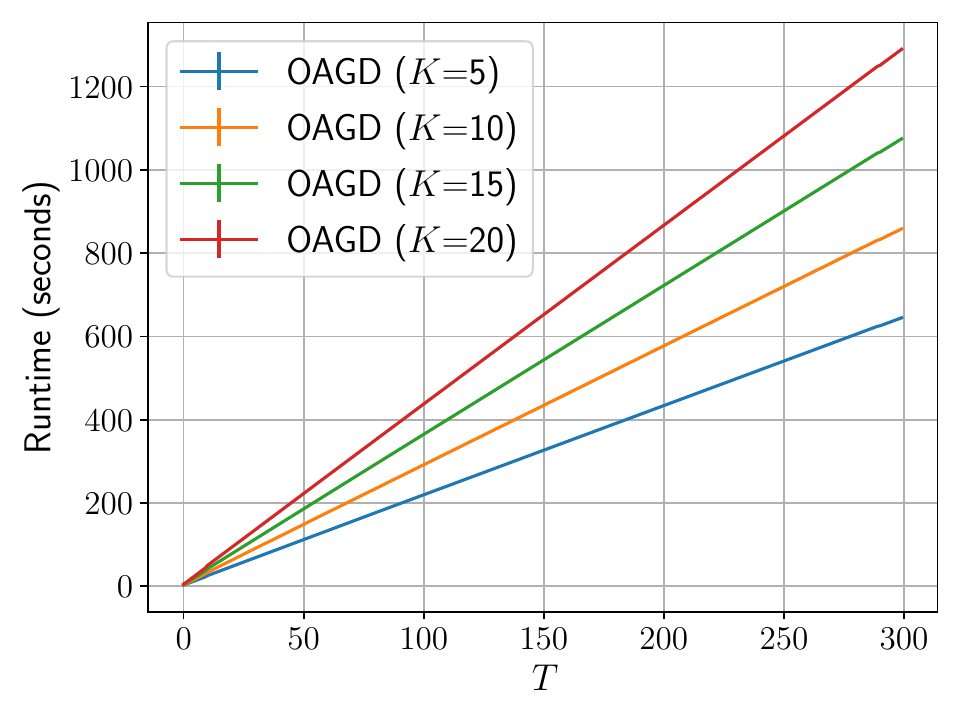}
\includegraphics[scale=0.34]{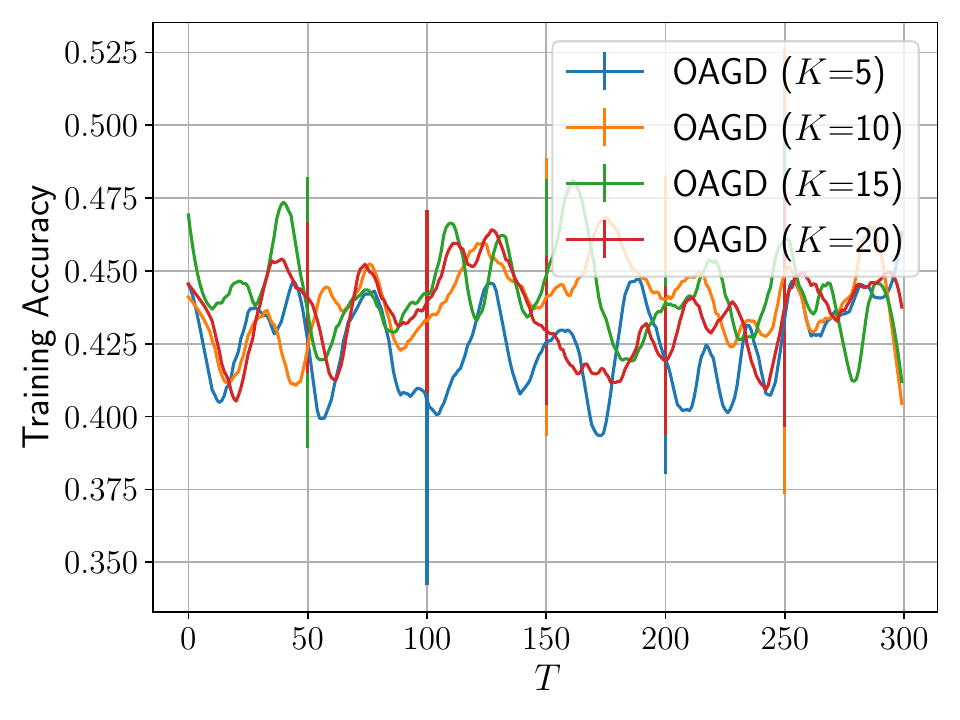} 
\includegraphics[scale=0.34]{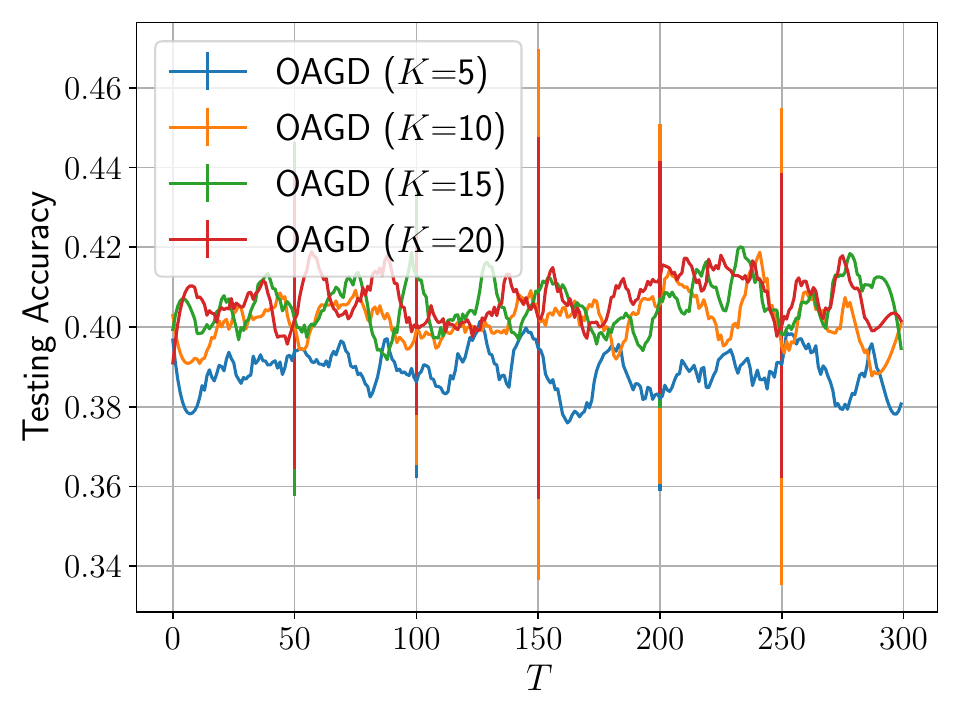}
\vspace{-3mm}
\caption{\label{fig:miniimage_compare_inner_stp}Performance comparison (mean$\pm$std) on online meta-learning for \textbf{miniImageNet} data across five runs. We compare our OAGD ($w=10$) with varying inner steps for inner optimization. A larger inner step may yield improved and more stable accuracy but leads to longer runtime.}
\end{center}
\vspace{-12pt}
\end{figure*}
\subsubsection{Sensitivity Analysis to Inner Step $K$}
Figure~\ref{fig:miniimage_compare_inner_stp} provides the sensitivity analysis results to different numbers of inner optimization steps $K_t=K$ for each round $t$. We can see that a larger number of inner optimization steps can make the accuracy better and more stable. However, this will take longer runtime.

\end{document}